\theoremstyle{definition}
\newtheorem{Def}{Definition}[section]
\newtheorem{Thm}[Def]{Theorem}
\newtheorem{Prp}[Def]{Proposition}
\newtheorem{Lem}[Def]{Lemma}
\newtheorem{Cor}[Def]{Corollary}
\newtheorem{Remark}[Def]{Remark}
\def\i<#1>{\langle #1 \rangle}
\newcommand{\ra}[1]		{\mbox{$\mathring{\mathrm{#1}}$}}
\newcommand{\mb}[1]	{ \boldsymbol{#1} }
\newcommand{\comp}	{	\circ }
\newcommand{\eps}	{ \varepsilon }
\newcommand{\dirac}	{ \slashed{\partial} }
\newcommand{\deebar} { \overline{\partial} }
\title{An $L^2$-index formula of monopoles with Dirac-type singularities}
\author{Masaki Yoshino}
\date{\today}
\begin{document}
	\maketitle
	\begin{abstract}
		We prove the Fredholmness of Dirac operators of monopoles with Dirac-type singularities on oriented complete Riemannian $3$-folds, and we also calculate the $L^2$-indices of them.
	\end{abstract}
	\section{Introduction}
		Let $(X,g)$ be a complete oriented Riemannian $3$-fold with the bounded scalar curvature.
		Let $Z\subset X$ be a finite subset.
		We fix a spin structure on $X$.
		Let $(V,h)$ be a Hermitian vector bundle on $X\setminus Z$ and $A$ a connection on $(V,h)$.
		Let $\Phi\in\mathrm{End}(V)$ be a skew-Hermitian endomorphism of $(V,h)$.
		The tuple $(V,h,A,\Phi)$ is said to be a monopole on $X\setminus Z$ if the tuple $(V,h,A,\Phi)$ satisfies the Bogomolny equation $F(A)= \ast\nabla_A(\Phi)$,
		where $F(A)$ is the curvature of $A$ and $\ast$ is the Hodge operator.
		Moreover,
		A point $p\in Z$ is a Dirac-type singularity of $(V,h,A,\Phi)$ of weight $\vec{k}_p=(k_{p,i}) \in \mathbb{Z}^{\mathrm{rank}(V)}$ if the monopole $(V,h,A,\Phi)$ satisfies a certain asymptotic behavior around $p\in Z$ (See Definition \ref{Def:def of monopoles} \ref{Enum_2: Def:def of monopoles}.).
		We set the Dirac operators $\dirac^{\pm}_{(A,\Phi)}:\Gamma(X\setminus Z,V\otimes S_X)\to \Gamma(X\setminus Z,V\otimes S_X)$ of $(V,h,A,\Phi)$ to be $\dirac^{\pm}_{(A,\Phi)}(s):= \dirac_{A}(s)\pm \Phi\otimes\mathrm{Id}_{S_X}$,
		where $S_X$ is the spinor bundle on $X$ and $\dirac_A$ is the Dirac operator of $(V,h,A)$.
		We regard $\dirac^{\pm}_{(A,\Phi)}$ as a closed operator $L^2(X\setminus Z,V\otimes S_X) \to L^2(X\setminus Z,V\otimes S_X)$ by considering derivation as a current.
		The main result is the following.
		\begin{Thm}[Theorem \ref{Thm:Index formula for general complete $3$-folds}]
			Let $(V,h,A,\Phi)$ be a monopole of rank $r$ on $X\setminus Z$ such that each $p\in Z$ is a Dirac-type singularity of $(V,h,A,\Phi)$ with weight $\vec{k}_p=(k_{p,i})\in \mathbb{Z}^r$.
			We assume that $(V,h,A,\Phi)$ satisfies the following conditions (\textit{the R\ra{a}de condition}).
			\begin{itemize}
				\item 
					Both $\Phi$ and $F(A)$ are bounded.
				\item
					We have $\nabla_A(\Phi)|_{x} = o(1)$ as $x\to\infty$.
				\item
					There exists a compact religion $Y\supset Z$ such that $Y$ has a smooth boundary $\partial Y$, and the inequality $\mathrm{inf}_{x\in X\setminus Y}\bigl\{|\lambda| \mid \mbox{$\lambda$ is an eigenvalue of $\Phi(x)$}\bigr\} > 0$ is satisfied.
			\end{itemize}
			Then the Dirac operators $\dirac^{\pm}_{(A,\Phi)}$ are Fredholm and adjoint to each other.
			Moreover,
			their indices $\mathrm{Ind}(\dirac^{\pm}_{(A,\Phi)})$ are given as follows:
			\[
				\mathrm{Ind}(\dirac^{\pm}_{(A,\Phi)})
				=
				\mp\left\{
				\sum_{p\in Z}\sum_{k_{p,i}>0} k_{p,i}
				+ \int_{\partial Y}ch(V^{+})
				\right\}
				=
				\pm\left\{
				\sum_{p\in Z}\sum_{k_{p,i}<0} k_{p,i}
				+ \int_{\partial Y}ch(V^{-})
				\right\},
			\]
			where $V^{\pm}$ is a subbundle of $V|_{\partial Y}$ spanned by the eigenvectors of $\mp\sqrt{-1}\Phi$ with positive eigenvalues.
		\end{Thm}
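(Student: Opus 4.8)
\textbf{Strategy and the local model at a Dirac-type singularity.}
The plan is to treat $\dirac^{\pm}_{(A,\Phi)}$ as Callias-type operators---a bounded skew-Hermitian perturbation $\mp\Phi$ of the spin Dirac operator $\dirac_A$ on the odd-dimensional $X$, with $\Phi$ invertible off a compact set---so that Fredholmness comes from a Weitzenb\"ock estimate and the index localises, by excision, to two ``defect'' loci: the Dirac-type singularities $Z$ and the non-degenerate end past $\partial Y$. First I would record, from the definition of a Dirac-type singularity of weight $\vec{k}_p=(k_{p,i})$, that near $p\in Z$ the triple $(V,A,\Phi)$ is, after a unitary gauge transformation, exponentially close on a punctured ball to the direct sum $\bigoplus_i(L_{k_{p,i}},A_{k_{p,i}},\phi_{k_{p,i}})$, where $(L_k,A_k)$ is the charge-$k$ Dirac monopole line bundle with its singular connection (in the tangent cone of $X$ at $p$) and $\phi_k$ the associated bounded Higgs field. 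I would then study the model operator $\dirac^{-}_k:=\dirac_{A_k}-\phi_k$ on $S\otimes L_k$ over the punctured ball, regarded as a closed operator via currents: separating variables on the linking $2$-sphere (on which the twisted Dirac operator of $(S^2,L_k)$ has a $|k|$-dimensional kernel concentrated on one chirality) and analysing the resulting radial ODEs shows that $\dirac^{-}_k$ is Fredholm, that its current extension coincides with the one cut out by the natural APS spectral boundary condition on the linking sphere, and that its $L^2$-index is $k$ if $k\ge 0$ and $0$ if $k<0$; the total local contribution at $p$ is therefore $\sum_{k_{p,i}>0}k_{p,i}$. (I expect this to be carried out in an earlier section and would invoke it here.)

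\textbf{Fredholmness and adjointness.}
Next I would invoke the Lichnerowicz--Weitzenb\"ock identity, valid on $X\setminus Z$,
\[
	(\dirac^{\pm}_{(A,\Phi)})^{*}\dirac^{\pm}_{(A,\Phi)}
	=\nabla_A^{*}\nabla_A+\tfrac14\,\mathrm{scal}_g+\mathrm{cl}(F(A))\pm\mathrm{cl}(\nabla_A\Phi)-\Phi^{2},
\]
where $\mathrm{cl}(\cdot)$ denotes Clifford multiplication. Since $\Phi$ is skew-Hermitian, $-\Phi^{2}=\Phi^{*}\Phi\ge 0$, and by the third R\ra{a}de condition its eigenvalues are $\ge\delta^{2}>0$ on $X\setminus Y$; together with the boundedness of $\mathrm{scal}_g$ and $F(A)$ and with $\nabla_A\Phi=o(1)$ at infinity, this yields a compact $Y'\supset Y$ and a constant $c>0$ with $\|\dirac^{\pm}_{(A,\Phi)}u\|_{L^2}\ge c\,\|u\|_{L^2}$ for all $u$ in the domain supported in $X\setminus Y'$ (coercivity at infinity). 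Combining this with interior elliptic estimates and Rellich compactness on compact subsets of $X\setminus Z$, and with the local Fredholm/compactness statement at each $p\in Z$ from the previous step, I obtain an a~priori estimate ``modulo a compact error'' on a fixed neighbourhood of $Y$, hence finite-dimensional kernel and closed range, i.e.\ Fredholmness. The identity $(\dirac^{+}_{(A,\Phi)})^{*}=\dirac^{-}_{(A,\Phi)}$ holds formally because $\dirac_A$ is formally self-adjoint and $\Phi^{*}=-\Phi$; that it persists for the closed extensions reduces to the vanishing of all boundary pairings, which holds at infinity by the coercivity (domain elements decay) and at $Z$ by the symmetry of the model extensions above.

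\textbf{The index formula.}
Fix small $\varepsilon>0$ and set $Y_\varepsilon:=Y\setminus\bigsqcup_{p\in Z}B_\varepsilon(p)$, a compact spin $3$-manifold with boundary $\partial Y_\varepsilon=\partial Y\sqcup\bigsqcup_{p}\Sigma_p$, $\Sigma_p:=\partial B_\varepsilon(p)$. \emph{Step (a): localisation.} Because $\dirac^{-}_{(A,\Phi)}$ is invertible on the end $X\setminus Y$ (Weitzenb\"ock) and, near each $p$, the current domain is the APS spectral domain on $\Sigma_p$, an excision argument identifies $\mathrm{Ind}(\dirac^{-}_{(A,\Phi)})$ with the index of $\dirac^{-}_{(A,\Phi)}|_{Y_\varepsilon}$ under APS-type boundary conditions on $\partial Y_\varepsilon$, the projection on $\partial Y$ being that associated with the splitting $V|_{\partial Y}=V^{+}\oplus V^{-}$ ($\Phi$ being non-degenerate on $\partial Y$ by continuity of the third R\ra{a}de condition). \emph{Step (b): vanishing of the interior term.} As $\dirac^{-}_{(A,\Phi)}$ has the formally self-adjoint principal symbol of a Dirac operator on the odd-dimensional $Y_\varepsilon$, the leading heat coefficients of $(\dirac^{-})^{*}\dirac^{-}$ and $\dirac^{-}(\dirac^{-})^{*}$ agree, and the difference of the sub-leading ones is proportional to $\mathrm{cl}(\nabla_A\Phi)$, whose fibrewise trace over the spinors vanishes; hence the interior index density is identically zero, and by the APS index theorem $\mathrm{Ind}(\dirac^{-}_{(A,\Phi)})$ is a sum of boundary contributions. \emph{Step (c): evaluation of the boundary terms.} The $\partial Y$-contribution, by the Callias/Anghel boundary computation (the large spectral gap of the $\Phi$-part of the tangential operator reduces the reduced $\eta$-term to a topological quantity built from $V^{+}$), equals $\int_{\partial Y}ch(V^{+})$, which on the surface $\partial Y$ is $\int_{\partial Y}c_1(V^{+})$; each $\Sigma_p$-contribution equals $-\sum_{k_{p,i}>0}k_{p,i}$ by the model computation (the sign because $\Sigma_p$ carries the boundary orientation of $Y_\varepsilon$, opposite to that of a filling cap). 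Assembling, $\mathrm{Ind}(\dirac^{-}_{(A,\Phi)})=\sum_{p\in Z}\sum_{k_{p,i}>0}k_{p,i}+\int_{\partial Y}ch(V^{+})$, and $\mathrm{Ind}(\dirac^{+}_{(A,\Phi)})=-\mathrm{Ind}(\dirac^{-}_{(A,\Phi)})$ by adjointness; this is the ``$\mp$'' form. The ``$\pm$'' form follows from it via the topological identity $\sum_{p}\sum_i k_{p,i}=-\int_{\partial Y}c_1(V)$---obtained by applying Stokes' theorem to the closed form $c_1(V)$ on $Y_\varepsilon$, using $\int_{\Sigma_p}c_1(V)=-\sum_i k_{p,i}$ (definition of the weight $\vec{k}_p$)---together with $ch(V)=ch(V^{+})+ch(V^{-})$ on $\partial Y$.

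\textbf{Main obstacle.}
I expect the crux to be the analysis at the Dirac-type singularities: proving that the current extension near each $p$ is Fredholm with local $L^2$-index $\sum_{k_{p,i}>0}k_{p,i}$ and coincides with the APS spectral extension on $\Sigma_p$, and then running the excision rigorously in the simultaneous presence of the singular points and the non-compact end---that is, additivity of the index over $Y_\varepsilon$, the punctures, and the end, with the end fully absorbed into the $\partial Y$-term, and the correct identification of the resulting $\eta$-contributions. The analysis of the end itself is comparatively standard, of R\ra{a}de type, reducing to the invertibility statement supplied by the Weitzenb\"ock identity.
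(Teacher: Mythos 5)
Your Fredholmness argument (Weitzenb\"ock coercivity on the end where $\Phi$ is invertible, Rellich compactness on compact subsets of $X\setminus Z$, and a local compactness/parametrix statement at each puncture coming from the mode analysis of the flat Dirac monopole) is essentially the paper's: the paper obtains the local statement from Corollary \ref{Cor:Relich type thm for Dirac monopole} and quotes R\ra{a}de for the end. One correction: the approximation of $(V,h,A,\Phi)$ near $p$ by $\bigoplus_i(L_{k_{p,i}},A_{k_{p,i}},\Phi_{k_{p,i}})$ is only $O(1)$ (bounded difference of connection matrices and Higgs fields, Corollary \ref{Cor:Appro. by Dirac monopole}), not exponential; boundedness suffices for what you need, but the stronger claim is not available.

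For the index your route is genuinely different from the paper's, and as written it has a real gap at its centre. The paper does \emph{not} localise the index to boundary terms of an APS problem on $Y_{\eps}$: it (i) proves the closed-manifold case by lifting the monopole through a circle fibration to an instanton on a closed $4$-fold with $S^1$-action and evaluating the $S^1$-invariant index by the Atiyah--Singer--Lefschetz fixed point formula at the fixed points (Propositions \ref{Prp:L^2-lift of Dirac op.}--\ref{Prp:S^1-inv ind. formula}), and (ii) reduces the complete case to the closed case and to R\ra{a}de's theorem via Charbonneau's excision formula, with the auxiliary computation $\mathrm{Ind}(\dirac^{\pm}_{(A_k,\Phi_{a,k})})=0$ for $ak>0$ (radial ODEs) needed because the closed-manifold formula forces total weight zero. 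Your step (b) is the weak point: the claim that the interior index density of $(\dirac^{-})^{\ast}\dirac^{-}-\dirac^{-}(\dirac^{-})^{\ast}$ vanishes pointwise is not justified by the vanishing of $\mathrm{tr}_{S}(\mathrm{clif}(\nabla_A\Phi))$; in dimension $3$ the coefficient that enters McKean--Singer is the $t^{0}$ heat coefficient, which contains cubic Clifford traces of the form $\mathrm{tr}\bigl(\mathrm{clif}(\alpha)\mathrm{clif}(\beta)\mathrm{clif}(\gamma)\bigr)\propto\langle\alpha\wedge\beta\wedge\gamma,\mathrm{vol}\rangle$ and terms coupling $F(A)$, $\nabla_A\Phi$ and $\Phi$, and there is no off-the-shelf APS theorem for this non-chiral operator on an odd-dimensional manifold carrying simultaneously a boundary, conical singular points and a noncompact end. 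Likewise the identification of the current-domain closed extension with the APS spectral extension on each $\Sigma_p$ is asserted rather than proven; Proposition \ref{Prp:Ker of Dirac monopole} supplies the raw material (the local $L^2$-kernels of the models, with the extra $\mathrm{max}(k,0)$-dimensional piece for $\dirac^{-}$), but converting it into a boundary condition and then running the three-way excision is precisely the analysis the paper's $4$-dimensional lift is designed to avoid. You correctly flag this as the crux, but without it the argument does not close.
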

		The celebrated index theorem proved by Atiyah and Singer have been applied in a wide range including gauge theory, differential topology and complex geometry.
		However,
		The indices of elliptic differential operators on odd-dimensional closed manifolds are always $0$.
		Therefore we consider the index theorems of elliptic operators on odd-dimensional open manifolds.
		On one hand,
		Callias \cite{Ref:Cal} proved the index theorem of the Dirac operators of $SU(2)$-bundles on $\mathbb{R}^{2n+1}$ that satisfies a certain asymptotic behavior at infinity.
		Callias's index theorem is generalized to the Dirac operators of vector bundles on odd-dimensional complete spin manifolds by R\ra{a}de \cite{Ref:Rad}.
		On the other hand,
		Kronheimer \cite{Ref:Kro} defined the notion of Dirac-type singularities of monopoles on flat Riemannian $3$-folds,
		and Pauly \cite{Ref:Pau} generalize it to any Riemannian $3$-folds.
		Moreover,
		Pauly proved the index theorem of the deformation complexes on $SU(2)$-monopoles with Dirac-type singularities on closed oriented $3$-folds.
		However,
		Pauly's argument essentially needs the condition $\sum_i k_{p,i}=0$ for any $p\in Z$,
		and it is difficult to apply the argument to calculate the indices of the Dirac operators of $(V,h,A,\Phi)$ even if $X$ is a closed manifold.
		%段落分けて、主結果を２段階で導いたことを書く？
		
		The proof of the main result is divided into two parts.
		First we extend Pauly's argument and calculate the indices of $\dirac^{\pm}_{(A,\Phi)}$ when $X$ is a closed manifold (Theorem \ref{Thm:Index of modi. and orig. Dirac} and Corollary \ref{Cor:Index formula of monopole}),
		by constructing a lift of $(V,h,A,\Phi)$ on a $4$-dimensional closed manifold equipped with an $S^1$-action.
		Next we combine our result and R\ra{a}de's index theorem in \cite{Ref:Rad},
		and obtain the index formula on general complete Riemannian $3$-folds (Theorem \ref{Thm:Index formula for general complete $3$-folds}).
		
		This result was obtained in the study of the inverse transform of the Nahm transform from $L^2$-finite instantons on the product of $\mathbb{R}$ and a $3$-dimensional torus $T^3$ to Dirac-type singular monopoles on the dual torus $\hat{T}^3$ of $T^3$ in \cite{Ref:Yos}.
		\subsection*{Acknowledgments}
			I am deeply grateful to my supervisor Takuro Mochizuki for insightful and helpful discussions and advices.
			I thank Tsuyoshi Kato for kindly answering to my question about the $S^1$-equivariant index theorem.
	\section{Preliminary}\label{Sec:Preliminary}
		\subsection{Monopoles with Dirac-type singularities}
			We recall the definition of monopoles with Dirac-type singularities
			by following \cite{Ref:Cha-Hur}.
			\begin{Def}\label{Def:def of monopoles}
				Let $(X,g)$ be an oriented Riemannian 3-fold and $\ast_g$ be the Hodge operator on $X$.
				If there is no risk of confusion,
				then we abbreviate $\ast_g$ to just $\ast$.
				\begin{enumerate}[label=(\roman*)]
					\item
						Let $(V, h)$ be a Hermitian vector bundle with a unitary connection $A$ on $X$.
						Let $\Phi$ be a skew-Hermitian endomorphism of $V$.
						The tuple $(V, h, A, \Phi)$ is said to be a monopole on $X$ if it satisfies the Bogomolny equation $F(A) = \ast \nabla_A(\Phi)$.
					\item\label{Enum_2: Def:def of monopoles}
						Let $Z \subset X$ be a discrete subset.
						Let $(V, h, A, \Phi)$ be a monopole of rank $r \in \mathbb{N}$ on $X \setminus Z$.
						A point $p\in Z$ is called a Dirac-type singularity of the monopole $(V, h, A, \Phi)$ with weight $\vec{k}_p=(k_{p,i}) \in \mathbb{Z}^r$
						if the following holds.
						\begin{itemize}
							\item
								There exists a small neighborhood $B$ of $p$ such that
								$(V,h)|_{B\setminus\{p\}}$ is decomposed into a sum of Hermitian line bundles $\bigoplus_{i=1}^{r} F_{p,i}$
								with $\mathrm{deg}(F_{p,i})=\int_{\partial B} c_1(F_{p,i}) = k_{p,i}$.
							\item
								In the above decomposition, we have the following estimates,
								\begin{align*}
								\left\{
									\begin{array}{l}
										\Phi  = \displaystyle\frac{\sqrt{-1}}{2R_p}\sum_{i=1}^{r} k_{p,i}\cdot Id_{F_{p,i}} + O(1)\\
										\nabla_A(R_p\Phi) = O(1),
									\end{array}
								\right.
								\end{align*}
								where $R_p$ is the distance from $p$.
						\end{itemize}
						For a monopole $(V,h,A,\Phi)$ on $X\setminus Z$,
						if each point $p\in Z$ is a Dirac-type singularity,
						then we call $(V,h,A,\Phi)$ a Dirac-type singular monopole on $(X,Z)$.
				\end{enumerate}
			\end{Def}
			\begin{Remark}
				Let $X$ be a compact manifold and $Z\subset X$ a finite subset.
				For a Dirac-type singular monopole $(V,h,A,\Phi)$ on $(X,Z)$,
				we have $\sum_{p\in Z}\sum_{i}k_{p,i}=0$ by the Stokes theorem,
				where $\vec{k}=(k_{p,i}) \in \mathbb{Z}^{\mathrm{rank}(V)}$ is the weight of $(V,h,A,\Phi)$ at $p\in Z$.
			\end{Remark}
			We also recall the notion of instantons.
			\begin{Def}
				Let $(Y,g)$ be an oriented Riemannian $4$-fold.
				For a Hermitian vector bundle $(V,h)$ on $Y$ and a connection $A$ on $(V,h)$,
				the tuple $(V,h,A)$ is an instanton if the ASD equation $F(A)=-\ast F(A)$ is satisfied.
			\end{Def}
			\begin{Remark}
				If $(Y,g)$ is a K\"{a}hler surface with the K\"{a}hler form $\omega$,
				the condition that a tuple $(V, h, A)$ is an instanton on $Y$ is equivalent to the one that $(V,\deebar_A,h)$ is a holomorphic Hermitian vector bundle satisfying the Hermite-Einstein condition $F(A)\wedge \omega=0$,
				where $\deebar_A$ is the $(0,1)$-part of $\nabla_A$.
			\end{Remark}
			For example,
			we recall the flat Dirac monopole of weight $k\in\mathbb{Z}$.
			Let $g_{i,\mathrm{Euc}}$ denote the canonical metric on $\mathbb{R}^i$.
			For $i\in\mathbb{N}$, we denote by $r_i:\mathbb{R}^i\to \mathbb{R}$ the distance from $0\in\mathbb{R}^i$.
			Let $p:\mathbb{R}^3\setminus\{0\} \to S^2 (\simeq \mathbb{P}^1)$ be the projection.
			Let $\mathcal{O}(k)$ be a holomorphic line bundle on $\mathbb{P}^1$ of degree $k$.
			Let $h_{\mathcal{O}(k)}$ be a Hermitian metric of $\mathcal{O}(k)$ that the Chern connection $A_{\mathcal{O}(k)}$ of $(\mathcal{O}_k,h_{\mathcal{O}(k)})$ has a constant mean curvature.
			Then $(p^{\ast}\mathcal{O}(k),p^{\ast}h_{\mathcal{O}(k)},p^{\ast}A_{\mathcal{O}(k)}, \sqrt{-1}k/2r)$ is a Dirac-type singular monopole on $(\mathbb{R}^3, \{0\})$,
			where $r$ is the distance from the origin.
			We call this monopole the flat Dirac monopole of weight $k$,
			and denote by $(L_k,h_k,A_k,\Phi_k)$.
			
			We will recall the equivalent condition proved by Pauly \cite{Ref:Pau}.
			Let $U\subset \mathbb{R}^3$ be a neighborhood of $0\in\mathbb{R}^3$.
			Let $g$ be a Riemannian metric on $U$.
			We assume that the canonical coordinate of $\mathbb{R}^3$ is a normal coordinate of $g$ at $0$.
			Set the Hopf map $\pi:\mathbb{R}^4=\mathbb{C}^2 \to\mathbb{R}^3=\mathbb{R}\times\mathbb{C}$ to be $\pi(z_1,z_2)=(|z_1|^2-|z_2|^2, 2z_1z_2)$, where we set $z_i=x_i+\sqrt{-1}y_i$.
			We also set the $S^1(= \mathbb{R}/2\pi\mathbb{Z})$-action on $\mathbb{C}^2$ to be $\theta\cdot(z_1,z_2) := (e^{\sqrt{-1}\theta}z_1,e^{-\sqrt{-1}\theta}z_2)$.
			Then the restriction $\pi:\mathbb{R}^4\setminus\{0\} \to \mathbb{R}^3\setminus\{0\}$ forms a principal $S^1$-bundle.
			Then we have $\pi^{\ast}r_3 =r^2_4$.
			\begin{Lem}\label{Lem:prepare of Pauly condition}
				There exist a harmonic function $f:U\setminus\{0\}\to\mathbb{R}$ with respect to the metric $g$ and a $1$-form $\xi$ on $\pi^{-1}(U)$ such that the following hold.
				\begin{itemize}
					\item
						The $1$-form $\omega:=\xi/\pi^{\ast}f$ is a connection of $\pi:\mathbb{R}^4\setminus\{0\} \to \mathbb{R}^3\setminus\{0\}$,
						\textit{i.e.} $\omega$ is $S^1$-invariant, and we have $\omega(\partial_\theta)=1$.
						Here $\partial_\theta$ is the generating vector field of the $S^1$-action on $\mathbb{R}^4\setminus\{0\}$.
					\item
						We have $d\omega=\pi^{\ast}(\ast -df)$.
					\item 
						We have the following estimates:
						\begin{align*}
							\left\{
								\begin{array}{l}
									f = 1/2r_3 + o(1)\\
									\xi = 2(-y_1dx_1 + x_1dy_1 + y_2dx_2 - x_2dy_2 ) + O(r^2_4).\\
								\end{array}
							\right.
						\end{align*}
					\item
						The symmetric tensor $g_4=\pi^{\ast}f(\pi^{\ast}g + \xi^2)$ is a Riemannian metric of $L^2_{5,\mathrm{loc}}$-class on $\pi^{-1}(U)$,
						and we have an estimate $|g_4-2g_{4,\mathrm{Euc}}|_{g_{4,\mathrm{Euc}}}=O(r_4)$.
						Here a function on $\pi^{-1}(U)$ is of $L^2_{k,\mathrm{loc}}$-class if every derivative of $f$ up to order $k$ has a finite $L^2$-norm on any compact subset of $\pi^{-1}(U)$.
				\end{itemize}
			\end{Lem}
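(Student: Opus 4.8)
The plan is to recognise $(f,\xi,g_{4})$ as the data of the Gibbons--Hawking ansatz attached to the harmonic function $f$: I would construct $f$ and a connection on the Hopf circle bundle, set $\xi$ and $g_{4}$ accordingly, and then extract every estimate by comparison with the flat model $g=g_{3,\mathrm{Euc}}$, in which one checks by a direct computation that $f=1/2r_{3}$, $\xi=2(-y_{1}dx_{1}+x_{1}dy_{1}+y_{2}dx_{2}-x_{2}dy_{2})$, the connection is $\xi/\pi^{\ast}f$, and $g_{4}=2g_{4,\mathrm{Euc}}$ exactly on $\mathbb{R}^{4}\setminus\{0\}$.

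\emph{Construction of $f$.} Since the canonical coordinates are $g$-normal at $0$ we have $g-g_{3,\mathrm{Euc}}=O(r_{3}^{2})$, so $\Delta_{g}-\Delta_{\mathrm{Euc}}$ is a second-order operator whose principal, first-order and zeroth-order coefficients vanish to order $2$, $1$, $2$ at $0$. Applied to the degree $-1$ function $1/2r_{3}$ it produces $e_{0}:=(\Delta_{g}-\Delta_{\mathrm{Euc}})(1/2r_{3})$, which is smooth on $U\setminus\{0\}$ and $O(r_{3}^{-1})$, hence lies in $L^{p}(U)$ for $p<3$. Solving $\Delta_{g}f_{1}=-e_{0}$ on a slightly smaller ball with zero boundary values, elliptic regularity and the conical structure of $e_{0}$ give $f_{1}\in W^{2,p}$ with a conormal expansion $f_{1}=f_{1}(0)+O(r_{3})$ near $0$; then $f:=1/2r_{3}+f_{1}-f_{1}(0)$ is $g$-harmonic on $U\setminus\{0\}$ and satisfies $f=1/2r_{3}+o(1)$. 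The same estimates make $\ast df$ a closed $2$-form on $U\setminus\{0\}$ with $\ast df-\ast_{\mathrm{Euc}}d(1/2r_{3})=O(1)$ near $0$, and a period computation identifies $\tfrac{1}{2\pi}[\ast(-df)]$ with a generator of $H^{2}(U\setminus\{0\};\mathbb{Z})\cong\mathbb{Z}$.

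\emph{Construction of $\xi$ and $g_{4}$.} The restriction $\pi\colon\pi^{-1}(U)\setminus\pi^{-1}(0)\to U\setminus\{0\}$ is the degree-one Hopf $S^{1}$-bundle, and its Euler class is exactly $\tfrac{1}{2\pi}[\ast(-df)]$ by the previous step; hence one may choose an invariant connection $1$-form $\omega$ with $\omega(\partial_{\theta})=1$ and $d\omega=\pi^{\ast}(\ast(-df))$, by starting from any connection and correcting it by a basic $1$-form — possible precisely because those two cohomology classes coincide. Put $\xi:=\pi^{\ast}f\cdot\omega$, so $\omega=\xi/\pi^{\ast}f$ and
\[
g_{4}=\pi^{\ast}f\cdot\pi^{\ast}g+(\pi^{\ast}f)^{-1}\omega^{2}
\]
is the Gibbons--Hawking metric of $(f,\omega)$, positive definite away from $\{0\}$ because $f>0$, $\pi^{\ast}g$ is positive on horizontal vectors and $\xi(\partial_{\theta})=(\pi^{\ast}f)^{-1}\neq 0$, and equal to $2g_{4,\mathrm{Euc}}$ in the flat case. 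For the remaining estimates I would write $\omega=\omega_{\mathrm{flat}}+\delta\omega$: the bounds $f-1/2r_{3}=o(1)$ and $\ast df-\ast_{\mathrm{Euc}}d(1/2r_{3})=O(1)$, together with the curvature equation and a gauge choice, give $\delta\omega=O(1)$ near $\pi^{-1}(0)$. Since $\pi^{\ast}r_{3}=r_{4}^{2}$, pulling a bounded form on $\mathbb{R}^{3}$ back by $\pi$ raises its order of vanishing (a bounded $k$-form becomes $O(r_{4}^{k})$); substituting this into $\xi=\pi^{\ast}f\cdot\omega$ and then into $g_{4}=\pi^{\ast}f(\pi^{\ast}g+\xi^{2})$, and using $g=g_{3,\mathrm{Euc}}+O(r_{3}^{2})$, yields both $\xi=2(-y_{1}dx_{1}+x_{1}dy_{1}+y_{2}dx_{2}-x_{2}dy_{2})+O(r_{4}^{2})$ and $|g_{4}-2g_{4,\mathrm{Euc}}|_{g_{4,\mathrm{Euc}}}=O(r_{4})$, so in particular $g_{4}$ extends over $\pi^{-1}(0)$ as a metric.

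\emph{The main obstacle.} The delicate point is the precise regularity, $g_{4}\in L^{2}_{5,\mathrm{loc}}$. Neither $\pi^{\ast}f$ nor $\xi$ is smooth across the fibre over $0$ — the conormal expansion of $f_{1}$ contributes pullbacks of functions like $r_{3}$ times an angular factor, which on $\mathbb{R}^{4}$ are only of bounded regularity — so one cannot argue factor by factor. As in the flat case, where every factor is singular along $\pi^{-1}(0)$ yet $g_{4}=2g_{4,\mathrm{Euc}}$ is smooth, one must expand $f$, $\omega$ and $g$ to high enough order that, after the cancellations built into the Gibbons--Hawking combination and the order-doubling effect of $\pi$, the surviving non-smooth terms fall in $L^{2}_{5,\mathrm{loc}}$. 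Carrying these expansions and cancellations to the required order is the bulk of the work, and is done in detail in \cite{Ref:Pau}.
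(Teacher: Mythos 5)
The paper itself contains no proof of this lemma: it is quoted from Pauly \cite{Ref:Pau} as background for Proposition \ref{Prp:Pauly condition}, and your sketch — the Gibbons--Hawking construction attached to a $g$-harmonic perturbation of $1/2r_3$, a connection on the Hopf bundle with curvature $\pi^{\ast}(\ast(-df))$ obtained by matching periods, and the $L^2_{5,\mathrm{loc}}$ regularity deferred to \cite{Ref:Pau} — is exactly the argument the paper is implicitly invoking, so there is nothing in the paper to compare it against. One correction: your normalization $\xi:=\pi^{\ast}f\cdot\omega$ (copied from the statement) is inconsistent both with your own metric formula $g_4=\pi^{\ast}f\cdot\pi^{\ast}g+(\pi^{\ast}f)^{-1}\omega^2$ and with the flat model, where $\pi^{\ast}f=1/2r_4^2$ and $\xi(\partial_\theta)=2r_4^2$ force $\omega=\pi^{\ast}f\cdot\xi$ (equivalently $\xi=\omega/\pi^{\ast}f$, which is how the paper itself writes it in Section 3) in order to have $\omega(\partial_\theta)=1$; the displayed $\omega:=\xi/\pi^{\ast}f$ in the statement is a typo, and your Gibbons--Hawking expression for $g_4$ is the one consistent with the corrected relation.
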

			\begin{Prp}[Proposition 5 in \cite{Ref:Pau}]\label{Prp:Pauly condition}
				Let $(V,h,A)$ be a Hermitian vector bundle on $U\setminus\{0\}$ of rank $r$, and $\Phi\in\mathrm{End}(V)$ be a skew-Hermitian endomorphism.
				The tuple $(V,h,A,\Phi)$ is a monopole on $U\setminus\{0\}$ if and only if the tuple $(\pi^{\ast}V,\pi^{\ast}h,\pi^{\ast}A + \xi\otimes \pi^{\ast}\Phi)$ is an instanton on $\pi^{-1}(U)\setminus\{0\}$ with respect to the metric $g_4=\pi^{\ast}f(\pi^{\ast}g + \xi^2)$.
				Moreover,
				$0$ is a Dirac-type singularity of $(V,h,A,\Phi)$ of weight $\vec{k}=(k_i)\in\mathbb{Z}^r$ if and only if the following hold.
				\begin{itemize}
					\item 
						The instanton $(\pi^{\ast}V,\pi^{\ast}h,\pi^{\ast}A - \pi^{\ast}\Phi\otimes\xi)$ can be prolonged over $\pi^{-1}(U)$, 
						and the prolonged connection is represented by an $L^2_{6,\mathrm{loc}}$-valued connection matrix.
						We will denote by $(V_4,h_4,A_4)$ the prolonged instanton.
					\item
						The weight of the $S^1$-action on the fiber $V_4|_{0}$ agrees with $\vec{k}$ up to a suitable permutation.\\
				\end{itemize}
			\end{Prp}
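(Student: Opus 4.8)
The statement consists of two parts --- the equivalence of the Bogomolny and ASD equations under pull-back, and the dictionary between Dirac-type singularities and instantons that prolong over the fibre of $0$ with prescribed $S^1$-weights --- and I would treat them in this order. The first part is a purely local computation, a form of dimensional reduction along the $S^1$-action. With $\tilde A:=\pi^{\ast}A+\xi\otimes\pi^{\ast}\Phi$ one has, since $\xi$ is a scalar $1$-form and $\xi\wedge\xi=0$,
\[
F(\tilde A)=\pi^{\ast}F(A)+d\xi\otimes\pi^{\ast}\Phi-\xi\wedge\pi^{\ast}\nabla_A(\Phi).
\]
As $g_4=\pi^{\ast}f\,(\pi^{\ast}g+\xi^2)$ is conformal to $\pi^{\ast}g+\xi^2$ and the Hodge star on $2$-forms in dimension $4$ is conformally invariant, the ASD condition may be tested in the metric $\pi^{\ast}g+\xi^2$, for which $(e^1,e^2,e^3,\xi)$, with $e^i$ the horizontal lifts of an orthonormal coframe of $g$, is orthonormal. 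Decomposing $2$-forms as $\pi^{\ast}\Omega^2(\mathrm{base})\oplus\pi^{\ast}\Omega^1(\mathrm{base})\wedge\xi$ and expanding $d\xi$ with the identities $\omega=\xi/\pi^{\ast}f$ and $d\omega=\pi^{\ast}(\ast(-df))$ of Lemma \ref{Lem:prepare of Pauly condition}, one gets $F(\tilde A)=\pi^{\ast}\beta+\pi^{\ast}\gamma\wedge\xi$ with $\beta=F(A)-f^{-1}(\ast df)\,\Phi$ and $\gamma=\nabla_A(\Phi)-f^{-1}(df)\,\Phi$. The Hodge star interchanges $\pi^{\ast}\beta$ with (up to sign) $\pi^{\ast}(\ast_g\beta)\wedge\xi$, so the (anti-)self-duality of $F(\tilde A)$ reads $\gamma=\pm\ast_g\beta$; the two copies of $f^{-1}(df)\,\Phi$ then cancel, because $\ast_g\ast_g=\mathrm{id}$ on $1$-forms of a $3$-fold, and what remains is exactly $F(A)=\ast\nabla_A(\Phi)$, with orientations fixed so that the $4$-dimensional equation is the ASD one.

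For the forward direction of the second part I would argue by comparison with the flat Dirac monopole. If $0$ is a Dirac-type singularity of weight $\vec k=(k_i)$, then on a small ball $B$ around $0$ the monopole splits as $\bigoplus_i F_{p,i}$ with $\deg F_{p,i}=k_i$ and, by Definition \ref{Def:def of monopoles}, $\Phi=\frac{\sqrt{-1}}{2R_p}\sum_i k_i\,\mathrm{Id}_{F_{p,i}}+O(1)$ and $\nabla_A(R_p\Phi)=O(1)$; thus $(V,h,A,\Phi)$ agrees to leading order with the direct sum $\bigoplus_i(L_{k_i},h_{k_i},A_{k_i},\Phi_{k_i})$ of flat Dirac monopoles. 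For a flat Dirac monopole the construction of the first part produces the trivial flat connection on $\pi^{-1}(U)$ carrying the $S^1$-action of weight $k$ on the fibre --- a direct calculation gives $F(\tilde A)=0$ because $\Phi_k=\sqrt{-1}k f$ --- and this prolongs smoothly over $0$. For the general monopole, the estimates above (in particular $\nabla_A(R_p\Phi)=O(1)$, which makes the leading $O(R_p^{-2})$ terms of $\nabla_A(\Phi)$ and $f^{-1}(df)\Phi$ cancel in $\gamma$) force $F(\tilde A)$ to be bounded with respect to $g_4$ on the punctured ball, while $\pi^{\ast}V$ restricted to a small sphere $S^3_\epsilon$ is the pull-back of $\bigoplus_i\mathcal{O}(k_i)$ under the Hopf fibration $S^3_\epsilon\to S^2$, hence topologically trivial. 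The removable-singularity theorem for ASD connections, in the form valid for the $L^2_{5,\mathrm{loc}}$ metric $g_4$, then yields a prolongation $(V_4,h_4,A_4)$; the $S^1$-action on $\pi^{\ast}V$ extends to $V_4$, with weights $(k_i)$ on $V_4|_0$ (matching the flat model); and the $L^2_{6,\mathrm{loc}}$-regularity of the connection matrix follows by an Uhlenbeck Coulomb gauge fixing and elliptic bootstrapping of the ASD equation, with the pointwise bounds on $\Phi$ and $\nabla_A(R_p\Phi)$ as the starting Sobolev input.

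Conversely, assume $(V_4,h_4,A_4)$ is a prolongation with $L^2_{6,\mathrm{loc}}$ connection matrix and $S^1$-weights $(k_i)$ on $V_4|_0$. The contraction of the ball $\pi^{-1}(B)$ onto $0$ is $S^1$-equivariant, so $V_4|_{\pi^{-1}(B)}$ splits $S^1$-equivariantly according to the weights, and over the punctured ball --- a Hopf $S^1$-bundle over $B\setminus\{0\}$ --- each weight-$k_i$ summand descends to a line bundle $F_{p,i}$ with $\deg F_{p,i}=k_i$, the identification of weight with degree being dictated by the Euler class of the Hopf bundle (this is the reverse of the flat-model computation). Undoing the substitution of the first part exhibits $(V,h,A,\Phi)|_{B\setminus\{0\}}$ as a monopole; the weight decomposition forces the leading term $\frac{\sqrt{-1}}{2R_p}\sum_i k_i\,\mathrm{Id}_{F_{p,i}}$ of $\Phi$, and the $L^2_{6,\mathrm{loc}}$-regularity of $A_4$, via the Sobolev embedding $L^2_6\hookrightarrow C^1$ in dimension $4$, yields the estimates $\Phi=\frac{\sqrt{-1}}{2R_p}\sum_i k_i\,\mathrm{Id}_{F_{p,i}}+O(1)$ and $\nabla_A(R_p\Phi)=O(1)$, once one tracks the substitution $\pi^{\ast}R_p=r_4^2$. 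The genuinely delicate step throughout --- and the one carrying the real content --- is this regularity exchange: bootstrapping the ASD equation against a background metric that is only $L^2_{5,\mathrm{loc}}$, proving the prolongation lies in the precise space $L^2_{6,\mathrm{loc}}$, and conversely matching that Sobolev scale on the $4$-manifold with the $O(1)$-type pointwise asymptotics on the $3$-fold through a map $\pi$ whose vertical and horizontal scales differ, as recorded by $\pi^{\ast}R_p=r_4^2$.
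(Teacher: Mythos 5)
The paper does not prove this proposition at all --- it is imported verbatim as Proposition 5 of Pauly's paper \cite{Ref:Pau} --- so there is no in-paper argument to compare yours against; measured against Pauly's actual proof, your outline follows the same route (dimensional reduction of the curvature into a horizontal part and a $\xi\wedge(\text{horizontal})$ part, conformal invariance of the Hodge star on $2$-forms, and then Uhlenbeck-type removable-singularity plus gauge-fixing and bootstrapping for the prolongation and the weight/degree dictionary for the Hopf fibration), and you correctly flag the low-regularity removable-singularity step as the part carrying the real analytic content. One small remark: your formulas $\beta=F(A)-f^{-1}(\ast df)\Phi$ and $\gamma=\nabla_A(\Phi)-f^{-1}(df)\Phi$, for which the cancellation you invoke actually works, come from reading the relation between $\omega$ and $\xi$ as $\xi=\omega/\pi^{\ast}f$; the paper's Lemma \ref{Lem:prepare of Pauly condition} literally writes $\omega=\xi/\pi^{\ast}f$, but that reading is inconsistent with its own flat-model normalization $\omega(\partial_\theta)=1$, so you have silently (and correctly) adopted the only convention under which the computation closes.
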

			\begin{Remark}\ \,
				\begin{itemize}
					\item 
						If $g=g_{3,\mathrm{Euc}}$,
						we can choose $f=1/2r_3$ and $\xi = 2(-y_1dx_1 + x_1dy_1 + y_2dx_2 - x_2dy_2 )$.
						Then we have $g_4 = 2g_{4,\mathrm{Euc}}$.
					\item 
						By the Sobolev embedding theorem,
						the connection matrix of $A_4$ is of $C^{3}$ class.
				\end{itemize}
			\end{Remark}
			Let $h_{\mathbb{C}}$ be the canonical Hermitian metric on $\mathbb{C}$.
			We set the Hermitian line bundle $(\tilde{L},\tilde{h}):=(\pi^{-1}(U)\setminus\{0\})\times_{U(1)}(\mathbb{C},h_{\mathbb{C}})$ on $U\setminus\{0\}$ and take the connection $\tilde{A}$ induced by $\omega$.
			Then $(\tilde{L},\tilde{h},\tilde{A},\sqrt{-1}f)$ is a monopole on $U$ with respect to $g$, and $0$ is the Dirac-type singularity of weight $1$.
			We call the monopole $(\tilde{L}_k,\tilde{h}_k,\tilde{A}_k,\sqrt{-1}kf):= (\tilde{L}^{\otimes k},\tilde{h}^{\otimes k},\tilde{A}^{\otimes k},\sqrt{-1}kf)$ a Dirac monopole of weight $k$ with respect to $g$.
			The following proposition is a partial generalization of \cite[Proposition 5.2]{Ref:Moc-Yos}.
			\begin{Prp}
				Let $(V,h,A,\Phi)$ be a monopole on $U\setminus\{0\}$,
				and assume that the point $0$ is a Dirac-type singularity of weight $\vec{k}=(k_i)\in\mathbb{Z}^r$. 
				Then there exist a neighborhood $U'\subset U$ and a unitary isomorphism $\varphi:V|_{U'\setminus\{0\}} \simeq (\bigoplus_{i=1}^r \tilde{L}_{k_i})|_{U'\setminus\{0\}}$ such that the following estimates hold.
				\begin{align*}
					&|A-\varphi^{\ast}(\bigoplus\tilde{A}_{k_i})| =O(1).\\
					&|\Phi - \varphi^{\ast}(\sum \sqrt{-1}k_if\,\mathrm{Id}_{L_{k_i}})| =O(1).
				\end{align*}
			\end{Prp}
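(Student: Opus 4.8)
My plan is to transport the statement to the four-dimensional Hopf lift of Proposition~\ref{Prp:Pauly condition}, where it becomes a comparison between two genuinely $C^3$ instantons, and then to push that comparison back down through the quotient by $S^1$.

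\emph{Setting up the two lifts.} After shrinking $U$ so that the hypotheses of Proposition~\ref{Prp:Pauly condition} hold, that proposition provides an $S^1$-equivariant instanton $(V_4,h_4,A_4)$ on $\pi^{-1}(U)$, prolonging $(\pi^*V,\pi^*h,\pi^*A+\xi\otimes\pi^*\Phi)$, whose connection matrix is of class $C^3$ and whose $S^1$-weights on $V_4|_0$ are $k_1,\dots,k_r$. The tuple $\bigoplus_i(\tilde L_{k_i},\tilde h_{k_i},\tilde A_{k_i},\sqrt{-1}k_if)$ is again a monopole and $0$ is a Dirac-type singularity of it of weight $\vec k$, so the same proposition produces a second $S^1$-equivariant $C^3$ instanton $(W_4,h_4,A_4^W)$, prolonging the Hopf lift of $\bigoplus_i(\tilde L_{k_i},\tilde h_{k_i},\tilde A_{k_i},\sqrt{-1}k_if)$ and having weights $\vec k$ at $0$. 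On the punctured neighbourhood one has the two correspondence formulas $\nabla_{A_4}=\pi^*\nabla_A+\xi\otimes\pi^*\Phi$ and $\nabla_{A_4^W}=\pi^*\nabla_{\bigoplus_i\tilde A_{k_i}}+\xi\otimes\pi^*\!\bigl(\sum_i\sqrt{-1}k_if\,\mathrm{Id}\bigr)$.

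\emph{Comparing the lifts and descending.} Fix a small $S^1$-invariant ball $B_4=\{r_4<\delta\}\subset\pi^{-1}(U)$ and set $U':=\pi(B_4)=\{r_3<\delta^2\}$. Since the $S^1$-action on $\mathbb{C}^2$ is linear and fixes only the origin, $B_4$ is $S^1$-equivariantly contractible to $0$ by radial scaling, so both $V_4|_{B_4}$ and $W_4|_{B_4}$ are $S^1$-equivariantly unitarily isomorphic to trivial bundles (with fibres $V_4|_0$, resp.\ $W_4|_0$, with the diagonal action); as $V_4|_0$ and $W_4|_0$ are unitarily isomorphic $S^1$-representations there is an $S^1$-equivariant unitary isomorphism $\beta\colon V_4|_{B_4}\xrightarrow{\ \sim\ }W_4|_{B_4}$. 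I would take $\beta$ to be the composite of $A_4$-radial parallel transport to $0$, a fixed equivariant isometry $\beta_0\colon V_4|_0\to W_4|_0$, and $A_4^W$-radial parallel transport from $0$; then $\beta$ intertwines the two radial parallel transports, and in the resulting radial gauges the connection matrices of $A_4$ and of $\beta^*A_4^W$ are both $C^3$ and vanish at $0$. Restricting $\beta$ to $B_4\setminus\{0\}$ and dividing by the free $S^1$-action, which restores $(\pi^*V)/S^1=V$ and $(\pi^*\bigoplus_i\tilde L_{k_i})/S^1=\bigoplus_i\tilde L_{k_i}$ over $U'\setminus\{0\}$, yields the required unitary isomorphism $\varphi\colon V|_{U'\setminus\{0\}}\xrightarrow{\ \sim\ }(\bigoplus_i\tilde L_{k_i})|_{U'\setminus\{0\}}$.

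\emph{The two estimates.} Put $b:=\nabla_{A_4}-\beta^*\nabla_{A_4^W}$ and $\Psi:=\Phi-\varphi^*\!\bigl(\sum_i\sqrt{-1}k_if\,\mathrm{Id}\bigr)$. Subtracting the correspondence formulas gives, on $\pi^{-1}(U'\setminus\{0\})$,
\[
\pi^*\bigl(\nabla_A-\varphi^*\nabla_{\bigoplus_i\tilde A_{k_i}}\bigr)=b-\xi\otimes\pi^*\Psi .
\]
In the chosen gauges $b$ is a $C^3$ one-form vanishing at $0$, so $|b|_{g_{4,\mathrm{Euc}}}=O(r_4)$ and hence $|b|_{g_4}=O(r_4)$ near $0$, since $g_4$ is uniformly equivalent to $g_{4,\mathrm{Euc}}$ there by Lemma~\ref{Lem:prepare of Pauly condition}; likewise $|\xi|_{g_4}=O(r_4)$ by Lemma~\ref{Lem:prepare of Pauly condition}. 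Evaluating the identity on the vertical field $\partial_\theta$, on which the left-hand side vanishes because it is pulled back from $U'\setminus\{0\}$, and using $\xi(\partial_\theta)=2r_4^2+O(r_4^3)$ together with $|\partial_\theta|_{g_{4,\mathrm{Euc}}}=r_4$ (Lemma~\ref{Lem:prepare of Pauly condition}), I obtain $\pi^*\Psi=\xi(\partial_\theta)^{-1}b(\partial_\theta)=O(1)$, which is the estimate for $\Phi-\varphi^*(\sum_i\sqrt{-1}k_if\,\mathrm{Id})$. For the connection estimate I would use the comparability $|\pi^*\eta|_{g_4}\asymp r_4\,(|\eta|_g\circ\pi)$ near $0$, valid for every one-form $\eta$ on $U'\setminus\{0\}$ because $\pi^*\eta$ annihilates $\partial_\theta$ while on the horizontal complement $d\pi$ scales lengths by a factor of order $r_4$ (as $\pi$ is homogeneous of degree $2$) and $g_4\asymp g_{4,\mathrm{Euc}}$; combined with $|b|_{g_4}=O(r_4)$, $|\xi|_{g_4}=O(r_4)$ and the bound $|\Psi|=O(1)$ just proved, the displayed identity yields $|\pi^*(\nabla_A-\varphi^*\nabla_{\bigoplus_i\tilde A_{k_i}})|_{g_4}=O(r_4)$ and therefore $|A-\varphi^*(\bigoplus_i\tilde A_{k_i})|=O(1)$.

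\emph{Where the difficulty lies.} I expect the main obstacle to be the equivariant bookkeeping in the middle step: one has to check that the $S^1$-structure attached to Pauly's prolongation restricts on the punctured neighbourhood to the pull-back structure — so that its $S^1$-quotient genuinely reproduces $(V,h,A,\Phi)$ rather than a twist of it — and that $\beta$ can indeed be chosen to intertwine the radial parallel transports. The remaining care is in the near-origin asymptotics extracted from Lemma~\ref{Lem:prepare of Pauly condition}, namely the metric comparison $|\pi^*\eta|_{g_4}\asymp r_4|\eta|_g$ and, above all, the fact that $\xi(\partial_\theta)$ vanishes to order exactly two at $0$; this last point is precisely what turns $b(\partial_\theta)=O(r_4^2)$ into the bound $\Psi=O(1)$ (and into nothing stronger).
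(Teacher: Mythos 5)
Your proposal is correct and follows essentially the same route as the paper: both lift to the prolonged instantons $(V_4,A_4)$ and $(V_4',A_4')$ via Proposition \ref{Prp:Pauly condition}, produce an $S^1$-equivariant unitary isomorphism making the difference of connections vanish at the origin (hence $O(r_4)$), and then descend, separating the vertical component (which gives the $O(1)$ bound on $\Phi-\varphi^{\ast}\Phi'$ since $\xi(\partial_\theta)\sim 2r_4^2$ exactly cancels $b(\partial_\theta)=O(r_4^2)$) from the horizontal one (which gives $|A-\varphi^{\ast}A'|=O(1)$ via $|\pi^{\ast}\eta|_{g_4}\asymp r_4|\eta|_g$). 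The paper states these last two steps more tersely ("$\xi$ is orthogonal to $\pi^{\ast}(T^{\ast}\mathbb{R}^3)$ and $f=1/2r_3+o(1)$"), but the argument is the same.
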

			\begin{proof}
				Let $(V',h',A',\Phi')$ be the monopole $\bigoplus_{i=1}^r (\tilde{L}_{k_i},\tilde{h}_{k_i},\tilde{A}_{k_i},\sqrt{-1}k_if)$.
				By Proposition \ref{Prp:Pauly condition},
				the instantons $(\pi^{\ast}V,\pi^{\ast}h,\pi^{\ast}A-\pi^{\ast}\Phi\otimes\xi)$ and$(\pi^{\ast}V',\pi^{\ast}h',\pi^{\ast}A'-\pi^{\ast}\Phi'\otimes\xi)$ can be prolonged over $\pi^{-1}(U)$,
				and denote by $(V_4,h_4,A_4)$ and $(V'_4,h'_4,A'_4)$ respectively.
				Then the weights of $S^1$-actions on the fiber of $V_4$ and $V'_4$ at the origin coincide with each other, and the connections $A_4$ and $A'_4$ are $S^1$-invariant.
				Hence there exist an $S^1$-invariant neighborhood $U'_4\subset \pi^{-1}(U)$ of $0$ and an $S^1$-equivariant unitary isomorphism  $\varphi_4:V_4|_{U'_4} \to V'_4|_{U'_4}$ such that $A_4 - \varphi_4^{\ast}(A'_4)$ vanishes at the origin.
				Hence we have $|A_4 - \varphi_4^{\ast}(A'_4)| = O(r_4)$.
				Since $f = 1/2r_3 + o(1)$ and $\xi$ is orthogonal to $\pi^{\ast}(T^{\ast}\mathbb{R}^3)$ with the metric $g_4=\pi^{\ast}f(\pi^{\ast}g + \xi^2)$,
				the unitary isomorphism $\varphi:V|_{U'\setminus\{0\}}\to V'|_{U'\setminus\{0\}}$ induced by $\varphi_4$ satisfies the desired estimates,
				where we put $U' := \pi(U'_4)$.
			\end{proof}
			By the estimates in Lemma \ref{Lem:prepare of Pauly condition},
			we also obtain the following approximation.
			\begin{Cor}\label{Cor:Appro. by Dirac monopole}
				Let $(V,h,A,\Phi)$ be a monopole on $U\setminus\{0\}$,
				and assume that the point $0$ is a Dirac-type singularity of weight $\vec{k}=(k_i)\in\mathbb{Z}^r$. 
				Then there exist a neighborhood $U'\subset U$ and a unitary isomorphism $\varphi:V|_{U'} \simeq (\bigoplus_{i=1}^r L_{k_i})|_{U'}$ such that the following estimates hold.
				\begin{align*}
					&|A-\varphi^{\ast}(\bigoplus A_{k_i})| =O(1).\\
					&|\Phi - \varphi^{\ast}(\frac{\sqrt{-1}}{2r_3}\sum k_i \mathrm{Id}_{L_{k_i}})| =O(1).
				\end{align*}
			\end{Cor}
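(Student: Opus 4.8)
The plan is to deduce this corollary from the immediately preceding proposition together with the estimates collected in Lemma~\ref{Lem:prepare of Pauly condition}; the only point is that passing from the $g$-adapted Dirac monopoles $(\tilde L_{k_i},\tilde h_{k_i},\tilde A_{k_i},\sqrt{-1}k_if)$ to the flat ones $(L_{k_i},h_{k_i},A_{k_i},\Phi_{k_i})$ perturbs the relevant objects only by $O(1)$. Concretely, the preceding proposition already produces, after shrinking $U$, a unitary isomorphism $\varphi\colon V|_{U'\setminus\{0\}}\simeq\bigoplus_i\tilde L_{k_i}|_{U'\setminus\{0\}}$ with $|A-\varphi^*(\bigoplus_i\tilde A_{k_i})|=O(1)$ and $|\Phi-\varphi^*(\sum_i\sqrt{-1}k_if\,\mathrm{Id})|=O(1)$. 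So it is enough, for each weight $k\in\{k_1,\dots,k_r\}$, to construct on a possibly smaller neighborhood a unitary isomorphism $\psi\colon\tilde L_k\simeq L_k$ with $|\tilde A_k-\psi^*A_k|=O(1)$ and $|\sqrt{-1}kf-\psi^*\Phi_k|=O(1)$, and then compose with $\varphi$.

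The comparison of the Higgs fields is immediate: by the definition of the flat Dirac monopole one has $\Phi_k=\frac{\sqrt{-1}}{2r_3}k$, and by Lemma~\ref{Lem:prepare of Pauly condition} $f=1/2r_3+o(1)$, so $\sqrt{-1}kf-\psi^*\Phi_k=\sqrt{-1}k(f-1/2r_3)=o(1)$, which in particular is $O(1)$. For the bundles, I would use that the Dirac monopole $(\tilde L_k,\tilde h_k,\tilde A_k,\sqrt{-1}kf)$ reduces, for $g=g_{3,\mathrm{Euc}}$, to the flat Dirac monopole $(L_k,h_k,A_k,\Phi_k)$ (by the Remark following Proposition~\ref{Prp:Pauly condition}, since there one may take $f=1/2r_3$ and $\xi=\xi_0$), so in particular there is a unitary isomorphism $\psi\colon\tilde L_k\simeq L_k$ under which $A_k$ becomes the connection induced by the Euclidean model connection form $\omega_0:=\xi_0/\pi^*f_0$, where $\xi_0:=2(-y_1dx_1+x_1dy_1+y_2dx_2-x_2dy_2)$ and $f_0:=1/2r_3$.

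It then remains to estimate $\tilde A_k-\psi^*A_k$. Since $\omega-\omega_0$ is $S^1$-invariant and annihilates $\partial_\theta$, it descends to a $1$-form $\pi^*\beta$ on $\pi^{-1}(U)\setminus\{0\}$, and correspondingly $\tilde A_k-\psi^*A_k=\sqrt{-1}k\beta$ on $U\setminus\{0\}$. From $g_4=\pi^*f(\pi^*g+\xi^2)$, together with the fact that pulled-back forms are $g_4$-orthogonal to $\xi$, one gets the identity $\pi^*(|\beta|_g^2)=\pi^*f\cdot|\pi^*\beta|_{g_4}^2$. Writing $\omega-\omega_0=(\xi\,\pi^*f_0-\xi_0\,\pi^*f)/(\pi^*f\,\pi^*f_0)$ and inserting the estimates $\xi=\xi_0+O(r_4^2)$, $f=f_0+o(1)$, $\pi^*f_0=1/2r_4^2$, $\pi^*r_3=r_4^2$ and $|g_4-2g_{4,\mathrm{Euc}}|_{g_{4,\mathrm{Euc}}}=O(r_4)$ of Lemma~\ref{Lem:prepare of Pauly condition}, the numerator is $O(1)$ and $1/(\pi^*f\,\pi^*f_0)=O(r_4^4)$, so $|\pi^*\beta|_{g_4}=O(r_4^4)$ and hence $|\beta|_g=O(r_3^{3/2})=O(1)$. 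I expect this last estimate to be the only real content of the argument: one must keep careful track of how the singular normalizations $\pi^*f,\pi^*f_0\sim1/2r_4^2$ interact with the $O(r_4^2)$ and $o(1)$ error terms and then transport the $g_4$-bound upstairs into a $g$-bound downstairs along the Riemannian submersion $\pi$; everything else is a formal combination of results already established.
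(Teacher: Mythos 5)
Your overall strategy coincides with the paper's intended one: the corollary is obtained by combining the preceding proposition (comparison with the metric-adapted monopoles $\tilde L_{k_i}$) with the asymptotics of Lemma \ref{Lem:prepare of Pauly condition}, and your reduction to the single estimate $|\tilde A_k-\psi^{\ast}A_k|=O(1)$, the treatment of the Higgs fields, and the identity $\pi^{\ast}(|\beta|_g^2)=\pi^{\ast}f\cdot|\pi^{\ast}\beta|_{g_4}^2$ for descended horizontal forms are all correct.

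The gap is in the crux estimate, which you carry out with the literal formula $\omega=\xi/\pi^{\ast}f$. That formula is inconsistent with the rest of Lemma \ref{Lem:prepare of Pauly condition}: since $\xi(\partial_\theta)=2r_4^2(1+o(1))$ and $\pi^{\ast}f=\frac{1}{2}r_4^{-2}(1+o(1))$, the normalization $\omega(\partial_\theta)=1$ forces $\omega=\xi\cdot\pi^{\ast}f$ (equivalently $\xi=\omega/\pi^{\ast}f$, which is the convention actually used in Section 3). With the consistent normalization your bookkeeping gives
\[
\omega-\omega_0=(\xi-\xi_0)\,\pi^{\ast}f+\xi_0\,\pi^{\ast}(f-f_0)=O(r_4^{2})\cdot O(r_4^{-2})+O(r_4)\cdot o(1)=O(1)
\]
in $g_4$-norm, whence $|\beta|_g^2=f\,|\pi^{\ast}\beta|_{g_4}^2=O(r_3^{-1})$, i.e.\ $|\beta|_g=O(r_3^{-1/2})$ --- not $O(1)$. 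Your conclusion $|\beta|_g=O(r_3^{3/2})$ is an artifact of the misnormalized formula; it is also implausibly strong, since the curvatures of $\tilde A_k$ and $\psi^{\ast}A_k$ differ by $-\sqrt{-1}k(\ast_g df-\ast_{\mathrm{Euc}}df_0)$, which is genuinely of order $O(1)$ and not $o(1)$. To close the gap you need one further input: either a bound $O(r_4^{3})$ on the component of $\xi-\xi_0$ lying in $\pi^{\ast}(T^{\ast}\mathbb{R}^3)$ (not stated in the lemma), or --- more robustly --- use the freedom in choosing $\psi$: since $d\beta=-(\ast_g df-\ast_{\mathrm{Euc}}df_0)=O(1)$ near the origin, a radial primitive yields $\beta'$ with $d\beta'=d\beta$ and $|\beta'|_g=O(r_3)$, and $\beta-\beta'$ is exact on the punctured ball, so after twisting $\psi$ by the corresponding unitary gauge transformation the connection difference becomes $O(r_3)=O(1)$.
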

		\subsection{Local properties of harmonic spinors of the flat Dirac monopoles}\label{subsec:Loc. prop. of flat Dirac mon.}
			Let $(X,g)$ be an $n$-dimensional oriented spin manifold with a fixed spin structure.
			We denote by $S_X$ the spinor bundle on $X$,
			and by $\mathrm{clif}:T^{\ast}X\to \mathrm{End}(S_X)$ the Clifford product.
			If $n$ is an odd number,
			then we assume $(\sqrt{-1})^{(n+1)/2}\mathrm{clif}(\mathrm{vol}_{(X,g)})= -\mathrm{Id}_{S_X}$,
			where we use the canonical linear isomorphism between the exterior algebra and the Clifford algebra.
			The spinor bundle $S_X$ has the induced connection $A_{S_X}$ by the Levi-Civita connection on $X$,
			and we set the Dirac operator $\dirac_X:\Gamma(X,S_X)\to\Gamma(X,S_X)$ to be $\dirac_X(f):=\mathrm{clif}\comp \nabla_{A_{S_X}}(f)$.
			For a vector bundle $(V,h)$ on $X$ and a connection $A$ on $(V,h)$,
			we also set  the Dirac operator $\dirac_{A}:\Gamma(X,S_X\otimes V)\to\Gamma(X,S_X\otimes V)$ to be $\dirac_A(s):=\mathrm{clif}\comp \nabla_{A_{S_X}\otimes A}(s)$.
			If $n$ is even,
			then we have the decomposition $S_X = S^{+}_{X}\oplus S^{-}_{X}$, and the Dirac operator $\dirac_A$ is also decomposed into sum of the positive and negative Dirac operators $\dirac^{\pm}_{A}:\Gamma(X,S^{\pm}_{X}\otimes V)\to\Gamma(X,S^{\mp}_{X}\otimes V)$.
			If $\mathrm{dim}(X)=3$,
			then for a monopole $(V,h,A,\Phi)$ on $X$ we also set the Dirac operators $\dirac^{\pm}_{(A,\Phi)}:\Gamma(X,V\otimes S_X)\to\Gamma(X,V\otimes S_X)$ to be $\dirac^{\pm}_{(A,\Phi)}(f):=\dirac_{A}(f) \pm (\Phi\otimes\mathrm{Id}_{S_X})(f)$.
			
			For a differential operator $P:\Gamma(X,V_1)\to\Gamma(X,V_2)$ between Hermitian vector bundles $(V_1,h_1)$ and $(V_2,h_2)$ on $X$,
			we regard $P$ as the closed operator $P:L^2(X,V_1)\to L^2(X,V_2)$ with the domain $\mathrm{Dom}(P) := \{s\in L^2(X,V_1) \mid P(s) \in L^2\}$,
			where $P(s)$ is the derivative as a current.
			We regard $\mathrm{Dom}(P)$ as a Banach space equipped with the graph norm $||s||_{P} := ||s||_{L^2} +||P(s)||_{L^2}$.
			\begin{Remark}
				Any $3$-dimensional oriented manifolds are parallelizable,
				and hence they have spin structures.
			\end{Remark}
			
			Let $S_{\mathbb{R}^3}$ be the spinor bundle on $\mathbb{R}^3$ with respect to the trivial spin structure,
			and $d$ be the trivial connection on $S_{\mathbb{R}^3}$.
			By using the projection $p:\mathbb{R}^3\setminus\{0\}\to S^2$,
			We combine the Dirac operators of the Dirac monopole $(L_k,h_k,A_k,\Phi_k)=(p^{\ast}\mathcal{O}(k),p^{\ast}h_{\mathcal{O}(k)},p^{\ast}A_{\mathcal{O}(k)} \sqrt{-1}k/2r)$ with the Dirac operators of $\mathcal{O}(k)$ on $\mathbb{P}^1=S^2$.
			Let $S_{S^2}=S^{+}_{S^2}\oplus S^{-}_{S^2}$ be the spinor bundle on $(S^2,g_{S^2})$, and $\dirac^{\pm}_{S^2}:\Gamma(S^2,S^{\pm}_{S^2})\to\Gamma(S^2,S^{\mp}_{S^2})$ the Dirac operators on $S^2$.
			By the isometry $\mathbb{R}^3\simeq \left(\mathbb{R}_{+}\times S^2,(dr_3)^2 + r_3^2g_{S^2}\right)$ we obtain the unitary isomorphisms $S_{\mathbb{R}^3}|_{\mathbb{R}^3\setminus\{0\}} \simeq p^{\ast}S_{S^2}$.
			According to Nakajima \cite{Ref:Nak},
			under the identification $S_{\mathbb{R}^3}|_{\mathbb{R}^3\setminus\{0\}} \simeq p^{\ast}S_{S^2}$ the Dirac operator $\dirac_{\mathbb{R}^3}$ on $\mathbb{R}^3\setminus\{0\}$ is written as follows:
			\begin{align*}
				\dirac_{\mathbb{R}^3} 
					&= 
					\frac{1}{r_3}\left(\begin{array}{cc}
						\displaystyle\sqrt{-1}( r_3\frac{\partial}{\partial r_3} + 1) &
						\dirac^{-}_{S^2}\\
						\\
						\dirac^{+}_{S^2} &
						\displaystyle -\sqrt{-1}( r_3\frac{\partial}{\partial r_3} + 1)
					\end{array}\right).
			\end{align*}
			Therefore we obtain the following equality.
			\[
				\dirac^{\pm}_{(A_k,\Phi_k)} = 
				\frac{1}{r_3}\left(\begin{array}{cc}
					\displaystyle\sqrt{-1}( r_3\frac{\partial}{\partial r_3} + \frac{2\pm k}{2}) & \dirac^{-}_{\mathcal{O}(k)}\\
					\\
					\dirac^{+}_{\mathcal{O}(k)} & \displaystyle -\sqrt{-1}( r_3\frac{\partial}{\partial r_3} + \frac{2\mp k}{2})
				\end{array}\right).
			\]
			
			By the isomorphisms $S^{+}_{S^2}\simeq \Omega^{0,0}(\mathcal{O}(-1))$, $S^{-}_{S^2}\simeq \Omega^{0,1}(\mathcal{O}(-1))$ and $\dirac_{S^2} = \dirac^{+}_{S^2}+\dirac^{-}_{S^2}= \sqrt{2}(\deebar_{\mathcal{O}(-1)} + \deebar_{\mathcal{O}(-1)}^{\bigstar})$,
			we obtain $\mathrm{Ker}(\dirac^{+}_{\mathcal{O}(k)}) \simeq H^0(\mathbb{P}^1,\mathcal{O}(k-1))$ and $\mathrm{Ker}(\dirac^{-}_{\mathcal{O}(k)}) \simeq H^1(\mathbb{P}^1,\mathcal{O}(k-1))$,
			where $\deebar_{\mathcal{O}(-1)}^{\bigstar}$ is the formal adjoint of $\deebar_{\mathcal{O}(-1)}$.
			Let $f^{\pm}_{\nu} \in L^2(S^2,S^{\pm}_{S^2}\otimes \mathcal{O}(k))\;(\nu\in\mathbb{N})$ be the all eigenvectors of the operators $\dirac^{-}_{\mathcal{O}(k)}\comp \dirac^{+}_{\mathcal{O}(k)}$ and $\dirac^{+}_{\mathcal{O}(k)}\comp \dirac^{-}_{\mathcal{O}(k)}$ with non-zero eigenvalues respectively.
			We set $n_{\nu} >0$ to be the eigenvalue of $f^{\pm}_{\nu}$.
			Then, According to \cite{Ref:Alm-Pri},
			we have $\{n_{\nu}\} = \{q^2 + |k|q \;;\; q\in\mathbb{N}\}$.
			We set $q_{\nu}>0$ to satisfy $n_{\nu}= q_{\nu}^2 + |k|q_{\nu}$.
			We may assume that $\{f^{\pm}_{\nu}\}$ forms an orthonormal system and satisfies the relations $\dirac^{\pm}_{\mathcal{O}(k)}(f^{\pm}_{\nu})=\sqrt{n_{\nu}} f^{\mp}_{\nu}$ for any $\nu\in\mathbb{N}$.
			By the elliptic inequality and the Sobolev inequality,
			there exist $C',C''>0$ such that $||f^{\pm}_{\nu}||_{L^6} < C''||f^{\pm}_{\nu}||_{L^2_1} \leq C'(||f^{\pm}_{\nu}||_{L^2}+ ||\dirac^{\pm}_{\mathcal{O}(k)}(f^{\mp}_{\nu})||_{L^2}) = C'(1+\sqrt{n_\nu})$.
			Then by the interpolation inequality we obtain $||f^{\pm}_{\nu}||_{L^3} \leq (||f^{\pm}_{\nu}||_{L^2})^{1/2}\cdot(||f^{\pm}_{\nu}||_{L^6})^{1/2} = C\sqrt{1+\sqrt{n_\nu}}$,
			where we put $C:=\sqrt{C'}$.
			Hence we obtain the following lemma.
			\begin{Lem}\label{Lem:Est. of Harm. sec}
				We have the estimate $||f^{\pm}_{\nu}||_{L^3} = O(\sqrt{q_{\nu}})$.
			\end{Lem}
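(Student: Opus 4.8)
The plan is to combine the chain of inequalities established immediately before the statement with the explicit description of the spectrum. First I would recall that the elliptic estimate together with the Sobolev embedding $L^2_1\hookrightarrow L^6$ on the compact surface $S^2$ produces a constant $C'>0$ with
\[
	||f^{\pm}_{\nu}||_{L^6} \leq C'\bigl(1 + \sqrt{n_{\nu}}\bigr),
\]
using the relation $\dirac^{\pm}_{\mathcal{O}(k)}(f^{\pm}_{\nu}) = \sqrt{n_{\nu}}\,f^{\mp}_{\nu}$ and $||f^{\mp}_{\nu}||_{L^2}=1$. Then the interpolation inequality $||\,\cdot\,||_{L^3}\leq ||\,\cdot\,||_{L^2}^{1/2}\,||\,\cdot\,||_{L^6}^{1/2}$ yields $||f^{\pm}_{\nu}||_{L^3}\leq C\sqrt{1+\sqrt{n_{\nu}}}$ with $C=\sqrt{C'}$.

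Next I would insert the eigenvalue formula. By the description of the spectrum recalled above (see \cite{Ref:Alm-Pri}) we have $n_{\nu}=q_{\nu}^2+|k|q_{\nu}$ with $q_{\nu}\in\mathbb{N}$, hence
\[
	\sqrt{n_{\nu}} = \sqrt{q_{\nu}^2+|k|q_{\nu}} \leq \sqrt{(q_{\nu}+|k|)^2} = q_{\nu}+|k|,
\]
so that $\sqrt{1+\sqrt{n_{\nu}}}\leq\sqrt{1+|k|+q_{\nu}}$. Since $|k|$ is a fixed integer and $q_{\nu}\geq 1$, the right-hand side is at most a constant multiple of $\sqrt{q_{\nu}}$, which is exactly the claimed estimate $||f^{\pm}_{\nu}||_{L^3}=O(\sqrt{q_{\nu}})$.

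I do not expect any real obstacle; the whole content sits in the two ingredients already available, namely the elliptic/Sobolev/interpolation chain and the closed-form spectrum $\{q^2+|k|q : q\in\mathbb{N}\}$. The single point that deserves care is the reading of the $O$-symbol: each eigenvalue $n_{\nu}$ has finite multiplicity, so only finitely many indices $\nu$ have $q_{\nu}$ below any prescribed bound, and the estimate is to be understood as holding as $q_{\nu}\to\infty$ (equivalently, after enlarging the constant, uniformly in $\nu$). With this reading the bound is immediate from the two displays above.
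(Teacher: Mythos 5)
Your argument is correct and is essentially the paper's own: the elliptic estimate plus the Sobolev embedding give $||f^{\pm}_{\nu}||_{L^6}\leq C'(1+\sqrt{n_{\nu}})$, interpolation gives $||f^{\pm}_{\nu}||_{L^3}\leq C\sqrt{1+\sqrt{n_{\nu}}}$, and substituting $n_{\nu}=q_{\nu}^2+|k|q_{\nu}$ yields the $O(\sqrt{q_{\nu}})$ bound. No gaps.
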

			
			Through the above arguments,
			we obtain the following proposition.
			\begin{Prp}\label{Prp:Ker of Dirac monopole}
				Let $s$ be a section of $L_k\otimes S_{\mathbb{R}^3}$ on a punctured ball $B(r)^{\ast} := \{x\in\mathbb{R}^3 \mid 0<|x|<r \}$ for some $r>0$.
				\begin{enumerate}[label=(\roman*)]
					\item
						If we have $s\in L^2(B(r)^{\ast},L_k\otimes S_{\mathbb{R}^3}) \cap \mathrm{Ker}(\dirac^{+}_{(A_k,\Phi_k)})$,
						then there exists a sequence $\{c_{\nu}\} \subset \mathbb{C}$ such that we have 
						\[
						s = \sum_{\nu\in\mathbb{N}} c_{\nu} \left(a^{+}_{\nu}(r)f^{+}_{\nu} + a^{-}_{\nu}(r)f^{-}_{\nu}\right).
						\]
						Here the functions $a^{\pm}_{\nu}$ are given as follows:
						\begin{align*}
							&a^{+}_{\nu}(r) = r^{-1+q_{\nu}+|k|/2}.\\
							&a^{-}_{\nu}(r) = \frac{q_{\nu}+\mathrm{max}(0,k)}{\sqrt{-1}\sqrt{q_{\nu}^2+|k|q_{\nu}}}\,r^{-1+q_{\nu}+|k|/2}.
						\end{align*}
					\item 
						If we have $s\in L^2(B(r)^{\ast},L_k\otimes S_{\mathbb{R}^3}) \cap \mathrm{Ker}(\dirac^{-}_{(A_k,\Phi_k)})$,
						then there exist a sequence $\{c_{\nu}\} \subset \mathbb{C}$, $\alpha^{+} \in \mathrm{Ker}(\dirac^{+}_{\mathcal{O}(k)}) =H^0(\mathbb{P}^1,\mathcal{O}(k-1))$ and $\alpha^{-} \in \mathrm{Ker}(\dirac^{-}_{\mathcal{O}(k)})=H^1(\mathbb{P}^1,\mathcal{O}(k-1))$ such that we have 
						\[
							s = \sum_{+,-}\alpha^{\pm}\rho^{\pm}(r) +  \sum_{\nu\in\mathbb{N}} c_{\nu} \left(b^{+}_{\nu}(r)f^{+}_{\nu} + b^{-}_{\nu}(r)f^{-}_{\nu}\right).
						\]
						Here the functions $\rho^{\pm}$ and $b^{\pm}_{\nu}$ are given as follows:
						\begin{align*}
							&\rho^{\pm}(r) = r^{-1\pm k/2}.\\
							&b^{+}_{\nu}(r) = r^{-1+q_{\nu}+|k|/2}.\\
							&b^{-}_{\nu}(r) = \frac{q_{\nu}+\mathrm{max}(0,-k)}{\sqrt{-1}\sqrt{q_{\nu}^2+|k|q_{\nu}}}\,r^{-1+q_{\nu}+|k|/2}.
						\end{align*}
				\end{enumerate}
			\end{Prp}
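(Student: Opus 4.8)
The plan is to diagonalize the problem in the radial direction, using the matrix expression for $\dirac^{\pm}_{(A_k,\Phi_k)}$ recalled above together with the spectral data of $\dirac^{\pm}_{\mathcal{O}(k)}$. First, since $\dirac^{\pm}_{(A_k,\Phi_k)}$ is elliptic, interior elliptic regularity shows that any $s$ in its $L^2$-kernel on $B(r)^{\ast}$ is smooth on $B(r)^{\ast}$, so the identity $\dirac^{\pm}_{(A_k,\Phi_k)}(s)=0$ holds classically there (not merely as a current). Using the isometry $\mathbb{R}^3\setminus\{0\}\simeq(\mathbb{R}_{+}\times S^2,(dr_3)^2+r_3^2g_{S^2})$ and the identification $L_k\otimes S_{\mathbb{R}^3}\simeq p^{\ast}(S_{S^2}\otimes\mathcal{O}(k))$, I would write $s=(s^{+},s^{-})$ and expand $s^{+}(r_3,\cdot)$, $s^{-}(r_3,\cdot)$ against the orthonormal systems $\{f^{+}_{\nu}\}$, $\{f^{-}_{\nu}\}$ completed by orthonormal bases of the finite-dimensional kernels $\mathrm{Ker}(\dirac^{+}_{\mathcal{O}(k)})\simeq H^{0}(\mathbb{P}^1,\mathcal{O}(k-1))$ and $\mathrm{Ker}(\dirac^{-}_{\mathcal{O}(k)})\simeq H^{1}(\mathbb{P}^1,\mathcal{O}(k-1))$, obtaining coefficients that are smooth functions of $r_3$.

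Pairing $\dirac^{\pm}_{(A_k,\Phi_k)}(s)=0$ with each basis element and inserting the matrix form of $\dirac^{\pm}_{(A_k,\Phi_k)}$ together with $\dirac^{\pm}_{\mathcal{O}(k)}(f^{\pm}_{\nu})=\sqrt{n_{\nu}}\,f^{\mp}_{\nu}$ decouples the equation mode by mode: for each $\nu$ one gets a $2\times2$ first-order linear system $r_3\frac{\partial}{\partial r_3}v=M_{\nu}v$ with constant matrix $M_{\nu}$ built from $\tfrac{2\pm k}{2}$ and $\sqrt{n_{\nu}}$, while the $H^{0}$- and $H^{1}$-components satisfy decoupled scalar equations $r_3\frac{\partial}{\partial r_3}g+\tfrac{2\pm k}{2}g=0$ (which of the two signs goes with $H^0$ and which with $H^1$ flips between $\dirac^{+}_{(A_k,\Phi_k)}$ and $\dirac^{-}_{(A_k,\Phi_k)}$). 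The characteristic roots of $M_{\nu}$ solve $(\mu+1)^2-k^2/4=n_{\nu}=q_{\nu}^2+|k|q_{\nu}$, i.e. $\mu=-1\pm(q_{\nu}+|k|/2)$, and reading off the eigenvector ratio at the root $\mu=-1+q_{\nu}+|k|/2$ gives exactly the pair $(a^{+}_{\nu},a^{-}_{\nu})$ for $\dirac^{+}_{(A_k,\Phi_k)}$ and $(b^{+}_{\nu},b^{-}_{\nu})$ for $\dirac^{-}_{(A_k,\Phi_k)}$, the constants $q_{\nu}+\max(0,\pm k)$ coming from $\mu+1\pm k/2$. The scalar equations produce the powers $r_3^{-1\pm k/2}$, i.e. the $\rho^{\pm}$.

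Imposing $s\in L^{2}(B(r)^{\ast})$ then constrains only the behavior as $r_3\to0$: since $\int_{0}r_3^{2\mu}r_3^2\,dr_3<\infty$ requires $\mu>-3/2$, the root $\mu=-1-q_{\nu}-|k|/2$ is always excluded (as $q_{\nu}\geq1$), and among the two scalar powers $r_3^{-1\pm k/2}$ only the square-integrable one is allowed. For $\dirac^{+}_{(A_k,\Phi_k)}$ the $L^2$-admissible scalar power is precisely the one attached to the cohomology group that vanishes, so no scalar mode survives and no $\rho$-term appears; for $\dirac^{-}_{(A_k,\Phi_k)}$ exactly the mode $\rho^{+}\alpha^{+}$ (when $k\geq1$, where $H^0\neq0$) or $\rho^{-}\alpha^{-}$ (when $k\leq-1$, where $H^1\neq0$) survives, which is why the statement keeps both terms with one of them automatically zero. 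Each radial coefficient is thereby pinned down up to a single constant $c_{\nu}$, giving the claimed formulas.

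The step that needs genuine care is the convergence of the eigenfunction expansion. Applying Parseval on each sphere $\{r_3=\mathrm{const}\}$ and Fubini, $\|s\|_{L^2(B(r)^{\ast})}^2$ equals $\sum_{\nu}\int_{0}^{r}(|g^{+}_{\nu}|^2+|g^{-}_{\nu}|^2)\,r_3^2\,dr_3$ plus the contributions of the $H^0$- and $H^1$-parts; finiteness of this quantity both forces every radial coefficient to be square-integrable in $r_3$ (which is exactly what rules out the bad root) and shows that the resulting series converges to $s$ in $L^2(B(r)^{\ast})$. Interior elliptic estimates then upgrade this to $C^{\infty}$-convergence on compact subsets of $B(r)^{\ast}$, which retroactively legitimizes the termwise manipulations. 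The remaining verifications — consistency of the two scalar equations in each $2\times2$ system at the surviving root, and the bookkeeping with $\max(0,\pm k)$ — are routine.
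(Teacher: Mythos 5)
Your proposal is correct and is essentially the argument the paper intends: the paper derives the proposition ("Through the above arguments\dots") from exactly the ingredients you use, namely Nakajima's polar-coordinate form of $\dirac^{\pm}_{(A_k,\Phi_k)}$, the spectral decomposition of $L^2(S^2,S_{S^2}\otimes\mathcal{O}(k))$ into $\mathrm{Ker}(\dirac^{\pm}_{\mathcal{O}(k)})$ and the $f^{\pm}_{\nu}$, the resulting Euler-type ODE systems with indicial roots $\mu=-1\pm(q_{\nu}+|k|/2)$, and the $L^2$-condition $\mu>-3/2$ near the puncture. Your bookkeeping of the eigenvector ratios $q_{\nu}+\max(0,\pm k)$ and of which $\rho^{\pm}$-mode survives for which sign of $k$ matches the stated formulas.
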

			By the above proposition,
			we obtain the following corollary.
			\begin{Cor}\label{Cor:Cptness of Kernel}
				For arbitrary positive numbers $r>r'>0$,
				the restriction map $L^2(B(r)^{\ast},L_k\otimes S_{\mathbb{R}^3})\cap\mathrm{Ker}(\dirac^{\pm}_{(A_k,\Phi_k)}) \to L^2(B(r')^{\ast},L_k\otimes S_{\mathbb{R}^3})$ is a compact map.
			\end{Cor}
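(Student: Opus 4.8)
The plan is to read off from Proposition~\ref{Prp:Ker of Dirac monopole} an explicit orthogonal decomposition of the Hilbert space $\mathcal{K}^{\pm}(r):=L^2(B(r)^{\ast},L_k\otimes S_{\mathbb{R}^3})\cap\mathrm{Ker}(\dirac^{\pm}_{(A_k,\Phi_k)})$, to observe that the (obviously bounded) restriction map is block diagonal with respect to it with ``singular values'' that tend to $0$, and then to invoke the fact that a norm limit of finite rank operators is compact. First I would settle the $L^2$ bookkeeping: under the isometry $\mathbb{R}^3\setminus\{0\}\simeq(\mathbb{R}_{+}\times S^2,(dr_3)^2+r_3^2 g_{S^2})$ and the unitary identification $S_{\mathbb{R}^3}|_{\mathbb{R}^3\setminus\{0\}}\simeq p^{\ast}S_{S^2}$, for a radial function $a$ and a section $u$ on $S^2$ one has $\|a\cdot p^{\ast}u\|_{L^2(B(\rho)^{\ast})}^2=(\int_0^{\rho}|a(t)|^2 t^2\,dt)\,\|u\|_{L^2(S^2)}^2$. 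Since $\{f^{+}_{\nu},f^{-}_{\nu}\}$ is an orthonormal system in $L^2(S^2)$ and $f^{+}_{\nu},f^{-}_{\nu}$ lie in the orthogonal subbundles $S^{+}_{S^2},S^{-}_{S^2}$, the sections $w_{\nu}:=a^{+}_{\nu}f^{+}_{\nu}+a^{-}_{\nu}f^{-}_{\nu}$ (for $\dirac^{+}$; with $a^{\pm}_{\nu}$ replaced by $b^{\pm}_{\nu}$ for $\dirac^{-}$) together with the finitely many sections $\alpha^{\pm}\rho^{\pm}$ form an orthogonal system in every $L^2(B(\rho)^{\ast})$; here one uses in addition that $\alpha^{\pm}\in\mathrm{Ker}(\dirac^{\pm}_{\mathcal{O}(k)})$ while $f^{+}_{\nu}$ (resp.\ $f^{-}_{\nu}$) is an eigenvector of $\dirac^{-}_{\mathcal{O}(k)}\comp\dirac^{+}_{\mathcal{O}(k)}$ (resp.\ $\dirac^{+}_{\mathcal{O}(k)}\comp\dirac^{-}_{\mathcal{O}(k)}$) with nonzero eigenvalue, hence $L^2(S^2)$-orthogonal to those kernels. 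Thus Proposition~\ref{Prp:Ker of Dirac monopole} identifies $\mathcal{K}^{+}(r)$ with the Hilbert direct sum $\bigoplus_{\nu}\mathbb{C}\,w_{\nu}$ (completed in the $L^2(B(r)^{\ast})$-norm), and $\mathcal{K}^{-}(r)$ with $\mathrm{Ker}(\dirac^{+}_{\mathcal{O}(k)})\oplus\mathrm{Ker}(\dirac^{-}_{\mathcal{O}(k)})\oplus\bigoplus_{\nu}\mathbb{C}\,w_{\nu}$, the first two summands being finite dimensional as $H^0,H^1$ of a line bundle on $\mathbb{P}^1$.

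Next I would compute the effect of the restriction to $B(r')^{\ast}$. Since $a^{\pm}_{\nu}(t)$ and $b^{\pm}_{\nu}(t)$ are constant multiples of $t^{-1+q_{\nu}+|k|/2}$, one gets $\int_0^{\rho}|a^{\pm}_{\nu}(t)|^2 t^2\,dt=\gamma_{\nu}\,\rho^{2q_{\nu}+|k|+1}$ for a positive constant $\gamma_{\nu}$, and likewise $\int_0^{\rho}|\rho^{\pm}(t)|^2 t^2\,dt$ is a constant times $\rho^{\pm k+1}$; consequently $\|w_{\nu}\|_{L^2(B(r')^{\ast})}=(r'/r)^{q_{\nu}+(|k|+1)/2}\,\|w_{\nu}\|_{L^2(B(r)^{\ast})}$, while $\|\alpha^{\pm}\rho^{\pm}\|$ scales by a fixed power of $r'/r$ — all $\nu$-dependent constants cancelling in the ratio, and the $f^{+}_{\nu}$- and $f^{-}_{\nu}$-parts being treated separately by orthogonality. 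Hence in the orthonormal basis $\{w_{\nu}/\|w_{\nu}\|_{L^2(B(r)^{\ast})}\}$, augmented by a fixed orthonormal basis of the $\rho^{\pm}$-part in the $\dirac^{-}$ case, the restriction map is block diagonal: a fixed bounded operator on the finite-dimensional $\rho^{\pm}$-part, and on the remaining part the diagonal operator sending $w_{\nu}/\|w_{\nu}\|$ to a vector of norm $\mu_{\nu}:=(r'/r)^{q_{\nu}+(|k|+1)/2}$, these images being mutually orthogonal in $L^2(B(r')^{\ast})$ as well.

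Finally, $\{q^2+|k|q:q\in\mathbb{N}\}$ is the nonzero spectrum of the elliptic operators $\dirac^{\mp}_{\mathcal{O}(k)}\comp\dirac^{\pm}_{\mathcal{O}(k)}$ on the closed surface $S^2$, so each eigenvalue has finite multiplicity and $\#\{\nu:q_{\nu}\le N\}<\infty$ for every $N$; therefore $q_{\nu}\to\infty$, and since $0<r'/r<1$ we obtain $\mu_{\nu}\to0$. Truncating the diagonal part to the finitely many $\nu$ with $q_{\nu}\le N$ produces a finite rank operator within distance $\sup_{q_{\nu}>N}\mu_{\nu}$ of the restriction map, and this supremum tends to $0$ as $N\to\infty$; combined with the fact that the $\rho^{\pm}$-block already has finite rank, this exhibits the restriction map as a norm limit of finite rank operators, hence compact. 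The only points requiring care are the orthogonality bookkeeping above and the finiteness of the spectral multiplicities on $S^2$; both are routine, so I anticipate no genuine obstacle.
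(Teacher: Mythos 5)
Your proposal is correct and is essentially the argument the paper intends: the corollary is stated as an immediate consequence of Proposition \ref{Prp:Ker of Dirac monopole}, and your explicit diagonalization of the restriction map in the orthogonal basis coming from that proposition, with singular values $(r'/r)^{q_{\nu}+(|k|+1)/2}\to 0$, is the standard way to make that deduction precise. No gaps.
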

			As a preparation of Proposition \ref{Prp:Dol. lemma for Dirac monopoles},
			we prove the following lemma.
			\begin{Lem}\label{Lem:est. of ode.}
				Let $t_0>0$ be a positive number and $\alpha$ a real number.
				Set the constant $C_{\alpha}$ is given by
				\[
				C_{\alpha}=
				\left\{\begin{array}{ll}
				|2\alpha-1|^{-1/2} & (\alpha\neq 1/2)\\
				1 & (\alpha=1/2).
				\end{array}\right.
				\]
				There exists a compact operator $K_{\alpha}:L^2(0,t_0) \to C^0([0,t_0])$ such that for any $f\in L^2(0,t_0)$,
				the function $g:=K_{\alpha}(f)$ satisfies the estimate $|g(t)|\leq C_{\alpha}||f||_{L^2}\cdot t^{1/2}(1+\log(t_0/t)^{1/2})\leq C_{\alpha}||f||_{L^2}\cdot \sqrt{t_0}(1+1/\sqrt{e})$ and the differential equation $t\partial_t(g/t) + \alpha(g/t) = f$,
				where $C^0([0,t_0])$ is the Banach space consisting of bounded continuous functions on $[0,t_0]$.
			\end{Lem}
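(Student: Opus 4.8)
The plan is to solve the first-order linear equation explicitly in the variable $u:=g/t$, where it reads $t\partial_t u+\alpha u=f$, i.e. $\partial_t(t^{\alpha}u)=t^{\alpha-1}f$, so that $u(t)=t^{-\alpha}\int_{c}^{t}s^{\alpha-1}f(s)\,ds$ and hence $g(t)=t^{1-\alpha}\int_{c}^{t}s^{\alpha-1}f(s)\,ds$ for a choice of base point $c$. I take $c:=0$ when $\alpha>1/2$ and $c:=t_0$ when $\alpha\le 1/2$; this is precisely the choice for which $\int_{\min(c,t)}^{\max(c,t)}s^{2\alpha-2}\,ds<\infty$, the integrability one needs to apply Cauchy--Schwarz. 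Define $K_\alpha f$ to be this $g$. It is linear in $f$, and for $f\in L^2(0,t_0)\subset L^1_{\mathrm{loc}}(0,t_0)$ the function $F(t):=\int_{c}^{t}s^{\alpha-1}f(s)\,ds$ is locally absolutely continuous on $(0,t_0]$, hence so is $g=t^{1-\alpha}F$, with $\partial_t(g/t)=-\alpha t^{-\alpha-1}F+t^{-1}f$ a.e.; multiplying by $t$ and using $\alpha(g/t)=\alpha t^{-\alpha}F$ gives $t\partial_t(g/t)+\alpha(g/t)=f$ a.e. on $(0,t_0)$, which is the asserted equation in the sense of currents.

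Next I prove the pointwise estimate. If $\alpha\neq 1/2$, Cauchy--Schwarz gives $\bigl|\int_{c}^{t}s^{\alpha-1}f(s)\,ds\bigr|\le\bigl(\int_{\min(c,t)}^{\max(c,t)}s^{2\alpha-2}\,ds\bigr)^{1/2}||f||_{L^2}\le |2\alpha-1|^{-1/2}\,t^{\alpha-1/2}||f||_{L^2}$, whence $|g(t)|\le C_\alpha\,||f||_{L^2}\,t^{1/2}$, which is even stronger than required. If $\alpha=1/2$ then $c=t_0$ and $\int_{t}^{t_0}s^{-1}\,ds=\log(t_0/t)$, so $|g(t)|\le t^{1/2}\log(t_0/t)^{1/2}||f||_{L^2}$, and since $C_{1/2}=1$ this is the claimed bound. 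For the second, uniform inequality I use the elementary facts $t^{1/2}\le t_0^{1/2}$ and $t\log(t_0/t)\le t_0/e$ for $t\in(0,t_0]$ (maximizing $t\mapsto t\log(t_0/t)$ at $t=t_0/e$), so $t^{1/2}(1+\log(t_0/t)^{1/2})\le t_0^{1/2}+(t_0/e)^{1/2}=\sqrt{t_0}(1+1/\sqrt{e})$; in particular $g$ extends continuously to $[0,t_0]$ with $g(0)=0$, and $K_\alpha$ is a bounded linear map into $C^0([0,t_0])$.

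The main point to get right is the compactness assertion, which I obtain from the Arzel\`a--Ascoli theorem applied to the image of the closed unit ball $B\subset L^2(0,t_0)$: uniform boundedness in $C^0$ is the pointwise estimate above, and for equicontinuity I use that $g=K_\alpha f$ is absolutely continuous on $[0,t_0]$ with $\partial_t g=f+(1-\alpha)\,g/t$, so that for $0\le t'\le t\le t_0$ and $f\in B$,
\[
|g(t)-g(t')|\le\int_{t'}^{t}\bigl(|f(s)|+|1-\alpha|\,|g(s)|/s\bigr)\,ds\le\sqrt{t-t'}+|1-\alpha|\,C_\alpha\,\bigl(G(t)-G(t')\bigr),
\]
where $G(t):=\int_{0}^{t}s^{-1/2}\bigl(1+\log(t_0/s)^{1/2}\bigr)\,ds$ is finite, continuous and increasing with $G(0)=0$, hence uniformly continuous on $[0,t_0]$; this yields an equicontinuity modulus independent of $f$. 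Thus $K_\alpha(B)$ is precompact in $C^0([0,t_0])$, i.e. $K_\alpha$ is compact. The only delicate bookkeeping is the simultaneous validity of the convergence of the defining integral and of the Cauchy--Schwarz bound, which forces the case split in the choice of $c$, and the passage $t'\to 0$ in the displayed inequality, which is justified by absolute integrability of $\partial_t g$ near $0$ together with the continuity of $g$ at $0$ established above; everything else is routine.
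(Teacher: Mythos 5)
Your proposal is correct and follows essentially the same route as the paper: the same explicit integral formula for $K_\alpha$ with the same case split at $\alpha=1/2$, the same Cauchy--Schwarz estimates, and compactness via Arzel\`a--Ascoli using the differential equation to control $\partial_t g$ (the paper merely asserts this last step, which you have spelled out correctly).
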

			\begin{proof}
				We set $g=K_{\alpha}(f)$ to be
				\begin{align*}
					g(t):=
					\left\{\begin{array}{ll}
						\displaystyle t^{-\alpha+1}\int_{0}^{t} f(x)x^{\alpha-1}dx & (\alpha> 1/2)\\
						\\
						\displaystyle -t^{-\alpha+1}\int_{t}^{t_0} f(x)x^{\alpha-1}dx & (\alpha\leq 1/2).
					\end{array}\right.
				\end{align*}
				Then, by a direct calculation we have $t\partial_t(g/t) + \alpha(g/t) = f$.
				If $\alpha\neq 1/2$,
				then we obtain $|g(t)|\leq t^{-\alpha+1}||f||_{L^2} \sqrt{t^{2\alpha-1}/|2\alpha-1|}=|2\alpha-1|^{-1/2}||g||_{L^2}\cdot  t^{1/2}$.
				If $\alpha=1/2$,
				then we have $|g(t)|\leq ||f||_{L^2}\cdot t^{1/2}\log(t_0/t)^{1/2}$.
				As a consequence of the above inequalities,
				we obtain the desired estimate.
				By this estimate,
				the compactness of $K_{\alpha}$ follows from the Ascoli-Arzel\`{a} theorem and the differential equation.
			\end{proof}
			\begin{Prp}\label{Prp:Dol. lemma for Dirac monopoles}
				Let $r>0$ be a positive number.
				There exists a compact map $G^{\pm}:L^2(B(r)^{\ast},L_k\otimes S_{\mathbb{R}^3}) \to L^2(B(r)^{\ast},L_k\otimes S_{\mathbb{R}^3})$ such that we have $R(G^{\pm}) \subset \mathrm{Dom}(\dirac^{\pm}_{(A_k,\Phi_k)})$ and $\dirac^{\pm}_{(A_k,\Phi_k)} \comp G^{\pm} = \mathrm{Id}$.
				Moreover,
				we have $R(G^{\pm}) \subset L^3(B(r)^{\ast},L_k\otimes S_{\mathbb{R}^3})$, and hence $G^{\pm}:L^2\to L^3$ is bounded.
			\end{Prp}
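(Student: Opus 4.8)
Here is a plan for proving Proposition \ref{Prp:Dol. lemma for Dirac monopoles}.

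\smallskip

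\emph{Reduction to ordinary differential equations.} The starting point is the polar expression for $\dirac^{\pm}_{(A_k,\Phi_k)}$ recorded above: the construction of $G^{\pm}$ should reduce, by separation of variables, to a family of scalar equations to which Lemma \ref{Lem:est. of ode.} applies. Using the isometry $B(r)^{\ast}\cong\bigl((0,r)\times S^2,(dr_3)^2+r_3^2 g_{S^2}\bigr)$ and the unitary identification $S_{\mathbb{R}^3}|_{\mathbb{R}^3\setminus\{0\}}\simeq p^{\ast}S_{S^2}$, I would write a section $s$ of $L_k\otimes S_{\mathbb{R}^3}$ as a pair $(s^{+},s^{-})$ of $r_3$-families of $L^2$-sections of $S^{\pm}_{S^2}\otimes\mathcal{O}(k)$, and expand $s^{\pm}$ with respect to the orthonormal system $\{f^{\pm}_{\nu}\}$ together with the finite-dimensional spaces $\mathrm{Ker}(\dirac^{\pm}_{\mathcal{O}(k)})$. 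Since $\dirac^{\pm}_{\mathcal{O}(k)}(f^{\pm}_{\nu})=\sqrt{n_{\nu}}\,f^{\mp}_{\nu}$ and $\dirac^{\pm}_{\mathcal{O}(k)}$ kills its kernel, the closed operator $\dirac^{\pm}_{(A_k,\Phi_k)}$ on $L^2(B(r)^{\ast})$ respects this orthogonal decomposition: on each plane $\overline{L^2((0,r),r_3^2 dr_3)}\otimes(\mathbb{C}f^{+}_{\nu}\oplus\mathbb{C}f^{-}_{\nu})$ it is $r_3^{-1}$ times a $2\times2$ system $r_3\partial_{r_3}u+N_{\nu}u=r_3\,(\mathrm{data})$ with $N_{\nu}$ a constant Hermitian matrix with eigenvalues $\lambda^{\pm}_{\nu}=1\pm(q_{\nu}+|k|/2)$ (consistent with the indicial exponents in Proposition \ref{Prp:Ker of Dirac monopole}), and on each of the two kernel directions it is $r_3^{-1}$ times a scalar equation with the fixed coefficient $1\pm k/2$.

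\smallskip

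\emph{Solving the equations and assembling $G^{\pm}$.} Diagonalising each $N_{\nu}$ by a norm-one unitary decouples the $2\times2$ system into two scalar equations $r_3\partial_{r_3}w+\lambda w=r_3 h$; the substitution $g=r_3 w$ converts this into exactly the equation $t\partial_t(g/t)+\lambda(g/t)=f$ of Lemma \ref{Lem:est. of ode.} with $f=r_3 h\in L^2((0,r),dr_3)$ corresponding isometrically to $h\in L^2((0,r),r_3^2 dr_3)$. Thus $w=K_{\lambda}(r_3 h)/r_3$ is the desired solution, and Lemma \ref{Lem:est. of ode.} shows that the scalar solution operator so obtained is bounded with norm $O\bigl(|2\lambda-1|^{-1/2}\bigr)$, lands in the domain of the corresponding scalar operator, and is compact (it factors through the compact map $L^2\to C^0$ of the lemma composed with the bounded inclusion $C^0\hookrightarrow L^2$). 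Letting $G^{\pm}$ be the direct sum of these block solution operators (together with the finitely many kernel-direction ones), one gets $\dirac^{\pm}_{(A_k,\Phi_k)}\circ G^{\pm}=\mathrm{Id}$ and $R(G^{\pm})\subset\mathrm{Dom}(\dirac^{\pm}_{(A_k,\Phi_k)})$, because the identity holds exactly on each block and $\sum_{\nu}\|(\mathrm{data})_{\nu}\|_{L^2}^2=\|f\|_{L^2}^2$. Boundedness of $G^{\pm}$ follows from the uniform bound on the block norms, and compactness from $|2\lambda^{\pm}_{\nu}-1|=2q_{\nu}+|k|\pm1\to\infty$: the block norms tend to $0$, so $G^{\pm}$ is the operator-norm limit of its finitely-supported truncations, each a finite sum of compact operators.

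\smallskip

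\emph{The $L^3$ estimate.} It remains to prove $R(G^{\pm})\subset L^3$; boundedness of $G^{\pm}:L^2\to L^3$ then follows from the closed graph theorem and the bounded inclusion $L^3(B(r)^{\ast})\hookrightarrow L^2(B(r)^{\ast})$. Away from the singularity this is immediate from interior elliptic regularity for $\dirac^{\pm}_{(A_k,\Phi_k)}$ and the Sobolev embeddings $L^2_1\hookrightarrow L^6\hookrightarrow L^3$. Near the singularity I would work over the dyadic shells $A_j=\{2^{-j-1}r<|x|<2^{-j}r\}$ and combine: (i) the scale invariance of the flat Dirac monopole, so that $\dirac^{\pm}_{(A_k,\Phi_k)}$ is homogeneous of degree $-1$ under $x\mapsto\mu x$ and the Weitzenb\"ock--Kato--Sobolev estimate on a fixed annulus rescales to $\|s\|_{L^6(A_j)}\lesssim\|\dirac^{\pm}_{(A_k,\Phi_k)}s\|_{L^2(A_j')}+2^j r^{-1}\|s\|_{L^2(A_j')}$ on a slightly larger shell $A_j'$; (ii) H\"older's inequality $\|s\|_{L^3(A_j)}\le|A_j|^{1/6}\|s\|_{L^6(A_j)}$ with $|A_j|\sim(2^{-j}r)^3$; and (iii) the pointwise bound of Lemma \ref{Lem:est. of ode.}, $|u^{\pm}_{\nu}(r_3)|\lesssim q_{\nu}^{-1/2}\|(\mathrm{data})_{\nu}\|_{L^2}\,r_3^{-1/2}\bigl(1+\log(r/r_3)^{1/2}\bigr)$, which after integration over $(0,2^{-j}r)$ and summation in $\nu$ (using $q_{\nu}\ge1$ and Parseval) yields the radial decay $\|G^{\pm}f\|_{L^2(A_j)}\lesssim(1+j)^{1/2}2^{-j}r\,\|f\|_{L^2}$. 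Feeding (iii) into (i)--(ii) gives $\|G^{\pm}f\|_{L^3(A_j)}\lesssim(1+j)^{1/2}(2^{-j}r)^{1/2}\|f\|_{L^2}$, and summing the convergent series $\sum_j(1+j)^{3/2}2^{-3j/2}$ over the shells gives $G^{\pm}f\in L^3$ with $\|G^{\pm}f\|_{L^3}\lesssim\|f\|_{L^2}$.

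\smallskip

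\emph{Main obstacle.} The delicate point is the $L^3$ bound near the singularity. The naive estimate $\|G^{\pm}f\|_{L^3}\le\sum_{\nu}\|u^{\pm}_{\nu}\|_{L^3((0,r),r_3^2dr_3)}\|f^{\pm}_{\nu}\|_{L^3(S^2)}$, using $\|f^{\pm}_{\nu}\|_{L^3(S^2)}=O(\sqrt{q_{\nu}})$ from Lemma \ref{Lem:Est. of Harm. sec} and $\|u^{\pm}_{\nu}\|_{L^3}=O(q_{\nu}^{-1/2})\|(\mathrm{data})_{\nu}\|_{L^2}$ from Lemma \ref{Lem:est. of ode.}, does not suffice: since the spectral multiplicities of $\dirac^{\mp}_{\mathcal{O}(k)}\dirac^{\pm}_{\mathcal{O}(k)}$ grow linearly in $q_{\nu}$, the resulting series over $\nu$ is only square-summable, not absolutely summable. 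Organising the estimate by radial dyadic shells instead of by spherical eigenmodes is precisely what turns the borderline integrals near $r_3=0$ into convergent ones, at the price of a logarithmic factor per shell that disappears after the geometric summation.
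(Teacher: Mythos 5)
Your construction of $G^{\pm}$ (separation of variables into the $\mathrm{Ker}(\dirac^{\pm}_{\mathcal{O}(k)})$-directions and the $2\times2$ blocks spanned by $f^{+}_{\nu},f^{-}_{\nu}$, reduction to the scalar ODE of Lemma \ref{Lem:est. of ode.} with $\alpha=1\pm(2q_{\nu}+|k|)/2$, and compactness from $C_{\alpha}\to0$ plus compactness of $K_{\alpha}$) is exactly the paper's proof. Where you genuinely diverge is the $L^3$ statement. The paper's own argument there is precisely the ``naive estimate'' you reject: it observes $2\sqrt{q_{\nu}}\,C_{1\pm(2q_{\nu}+k)/2}\to1$, invokes Lemma \ref{Lem:Est. of Harm. sec} to get $\|f^{\pm}_{\nu}\|_{L^3}\cdot C_{1\pm(2q_{\nu}+k)/2}=O(1)$, and concludes $\|t\|_{L^3}<\infty$ by summing mode by mode. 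As you point out, the triangle inequality then requires $\sum_{\nu}\|r_3 s^{\pm}_{\nu}\|_{L^2}<\infty$, whereas Parseval only gives square-summability; so the paper's step is at best heavily compressed, and your dyadic-shell substitute (scale-invariant elliptic estimate on each annulus, H\"older from $L^6$ to $L^3$, and the radial decay $\|G^{\pm}f\|_{L^2(A_j)}\lesssim(1+j)^{1/2}2^{-j}r\|f\|_{L^2}$ coming from the pointwise bound of Lemma \ref{Lem:est. of ode.}) is a sound and more robust way to close it; your arithmetic for the geometric summation checks out. What the paper's route buys, when it works, is a one-line deduction tied directly to the spectral data; what yours buys is an argument that never needs absolute summability over the modes.

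One loose end in your version: the rescaled elliptic estimate on the shell $A_j$ needs a slightly larger shell $A_j'$, so for the outermost shells you are using interior regularity where none is available, and the statement claims $R(G^{\pm})\subset L^3$ on all of $B(r)^{\ast}$, not just on $B(r')^{\ast}$ for $r'<r$. This is easily repaired — extend the data by zero to $B(2r)^{\ast}$, run your construction there, and restrict; $\dirac^{\pm}_{(A_k,\Phi_k)}\circ G^{\pm}=\mathrm{Id}$, compactness, and the $L^3$ bound on $B(r)^{\ast}$ all survive — but you should say so, since as written the outer collar is not covered.
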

			\begin{proof}
				The proof for $\dirac^{+}_{(A_k,\Phi_k)}$ remains valid for $\dirac^{-}_{(A_k,\Phi_k)}$.
				Hence we prove only for $\dirac^{+}_{(A_k,\Phi_k)}$.
				The subspace that is spanned by $\mathrm{Ker}(\dirac^{\pm}_{\mathcal{O}(k)})$ and $\{f^{\pm}_\nu\}$ is dense in $L^2(S^2,S_{S^2}\otimes \mathcal{O}(k))$.
				Hence for any $s\in L^2(B(r)^{\ast},L_k\otimes S_{\mathbb{R}^3})$ there exist measurable maps $\alpha^{\pm}:(0,r) \to \mathrm{Ker}(\dirac^{\pm}_{\mathcal{O}(k)})$ and $s^{\pm}_{\nu}:(0,r)\to \mathbb{C}$ such that we have
				\[
					s = \alpha^{+} + \alpha^{-} + \sum_{\nu}\left(s^{+}_{\nu}f^{+}_{\nu} + s^{-}_{\nu}f^{-}_{\nu}\right)
				\]
				and 
				\[
					||s||^2_{L^2} = ||r_3\alpha^{+}||^2_{L^2}+||r_3\alpha^{-}||^2_{L^2} + \sum_{\nu}\left(||r_3s^{+}_{\nu}||^2_{L^2} + ||r_3s^{-}_{\nu}||^2_{L^2}\right).
				\]
				By some linear-algebraic operations and Lemma \ref{Lem:est. of ode.},
				we can take an element $t= \beta^{+} + \beta^{-} + \sum_{\nu}\left(t^{+}_{\nu}f^{+}_{\nu} + t^{-}_{\nu}f^{-}_{\nu}\right) \in L^2(B(r)^{\ast},L_k\otimes S_{\mathbb{R}^3})$ such that we have $\dirac^{+}_{(A_k,\Phi_k)}(t)=s$ and 
				\begin{align*}
					||t||^2_{L^2} 
					&= ||r_3\beta^{+}||^2_{L^2} + ||r_3\beta^{-}||^2_{L^2} + \sum_{\nu}\left(||r_3t^{+}_{\nu}||^2_{L^2} + ||rt^{-}_{\nu}||^2_{L^2}\right)\\
					&\leq ||r_3\alpha^{+}||_{L^2}^2 + ||r_3\alpha^{-}||_{L^2}^2 \\
					&\ \ \ + \sum_{\nu}\left\{\mathrm{max}(C_{1+(2q_{\nu}+k)/2},C_{1-(2q_{\nu}+k)/2})^2\left(||r_3s^{+}_{\nu}||^2_{L^2} + ||r_3s^{-}_{\nu}||^2_{L^2}\right)\right\},
				\end{align*}
				where $C_{\alpha}$ is the constant in Lemma \ref{Lem:est. of ode.}.
				Then We set $G^{+}(s):=t$, and $G^{+}$ is linear because all constructions of $G^{+}$ are linear.
				Since $C_{1\pm(2q_{\nu}+k)/2}=o(1)\;(\nu\to\infty)$,
				the compactness of $G^{+}$ is deduced from the compactness of $K_{\alpha}$ in Lemma \ref{Lem:est. of ode.}.
				
				By the definition we have $2\sqrt{q_{\nu}}\cdot C_{1\pm(2q_{\nu}+k)/2}\to 1\;\;(\nu\to\infty)$.
				Hence $||f^{\pm}_{\nu}||_{L^3}\cdot C_{1\pm(2q_{\nu}+k)/2} = O(1)$ by Lemma \ref{Lem:Est. of Harm. sec}.
				Therefore we obtain $||t||_{L^3}< \infty$ and the proof is complete.
			\end{proof}
			\begin{Cor}\label{Cor:Relich type thm for Dirac monopole}
				For any positive numbers $r>r'>0$,
				the restriction map $L^2(B(r)^{\ast},L_k\otimes S_{\mathbb{R}^3})\cap\mathrm{Dom}(\dirac^{\pm}_{(A_k,\Phi_k)}) \to L^2(B(r')^{\ast},L_k\otimes S_{\mathbb{R}^3})$ is a compact operator.
			\end{Cor}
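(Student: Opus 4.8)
The plan is to combine the bounded right inverse constructed in Proposition \ref{Prp:Dol. lemma for Dirac monopoles} with the compactness of the restriction on the kernel established in Corollary \ref{Cor:Cptness of Kernel}, via the usual Hodge-type splitting of an element of the domain. Concretely, I would write any $s$ in the domain as a ``kernel part'' plus an element in the image of $G^{\pm}$, and show that the restriction to $B(r')^{\ast}$ of each part is a compact operator.

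First, given $s\in L^2(B(r)^{\ast},L_k\otimes S_{\mathbb{R}^3}) \cap \mathrm{Dom}(\dirac^{\pm}_{(A_k,\Phi_k)})$, set $u:=\dirac^{\pm}_{(A_k,\Phi_k)}(s)\in L^2(B(r)^{\ast},L_k\otimes S_{\mathbb{R}^3})$ and decompose $s = G^{\pm}(u) + (s-G^{\pm}(u))$, where $G^{\pm}$ is the operator of Proposition \ref{Prp:Dol. lemma for Dirac monopoles}. Since $\dirac^{\pm}_{(A_k,\Phi_k)}\comp G^{\pm}=\mathrm{Id}$, and since $\mathrm{Dom}(\dirac^{\pm}_{(A_k,\Phi_k)})$ is a linear subspace containing both $s$ and $G^{\pm}(u)$, the difference $s-G^{\pm}(u)$ again lies in $\mathrm{Dom}(\dirac^{\pm}_{(A_k,\Phi_k)})$ and is annihilated by $\dirac^{\pm}_{(A_k,\Phi_k)}$ (the derivative being taken as a current); that is, $s-G^{\pm}(u)\in L^2(B(r)^{\ast},L_k\otimes S_{\mathbb{R}^3})\cap \mathrm{Ker}(\dirac^{\pm}_{(A_k,\Phi_k)})$.

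Next I would observe that both maps $s\mapsto G^{\pm}(u)$ and $s\mapsto s-G^{\pm}(u)$ are bounded from $\mathrm{Dom}(\dirac^{\pm}_{(A_k,\Phi_k)})$, equipped with the graph norm, into $L^2(B(r)^{\ast},L_k\otimes S_{\mathbb{R}^3})$, because $||u||_{L^2}\le ||s||_{\dirac^{\pm}_{(A_k,\Phi_k)}}$ and $G^{\pm}$ is bounded on $L^2$. Composing with the restriction to $B(r')^{\ast}$: the first map is the composite of the compact operator $G^{\pm}$ (Proposition \ref{Prp:Dol. lemma for Dirac monopoles}) with bounded maps, hence compact; the second map factors through the restriction $L^2(B(r)^{\ast},L_k\otimes S_{\mathbb{R}^3})\cap \mathrm{Ker}(\dirac^{\pm}_{(A_k,\Phi_k)}) \to L^2(B(r')^{\ast},L_k\otimes S_{\mathbb{R}^3})$, which is compact by Corollary \ref{Cor:Cptness of Kernel}. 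Therefore the restriction map in the statement is a sum of two compact operators, hence compact.

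I do not expect a genuine obstacle here; this corollary is essentially a formal consequence of the two preceding results. The only points to check with any care are that $\mathrm{Dom}(\dirac^{\pm}_{(A_k,\Phi_k)})$ really is a vector space closed under the algebraic operations above, so that $s-G^{\pm}(u)$ lies in the kernel of the \emph{closed} operator, and that the graph norm dominates $||u||_{L^2}$; both are immediate from the definition of the closed extension recalled above. Equivalently, one may phrase the whole argument by noting that $\mathrm{Id}-G^{\pm}\comp \dirac^{\pm}_{(A_k,\Phi_k)}$ restricts to a graph-norm-bounded projection of $\mathrm{Dom}(\dirac^{\pm}_{(A_k,\Phi_k)})$ onto $L^2(B(r)^{\ast},L_k\otimes S_{\mathbb{R}^3})\cap\mathrm{Ker}(\dirac^{\pm}_{(A_k,\Phi_k)})$.
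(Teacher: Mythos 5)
Your proposal is correct and is essentially the paper's own argument: the paper also writes each element of the domain as $G^{\pm}(\dirac^{\pm}_{(A_k,\Phi_k)}(s))$ plus a kernel element and invokes the compactness of $G^{\pm}$ from Proposition \ref{Prp:Dol. lemma for Dirac monopoles} together with Corollary \ref{Cor:Cptness of Kernel}, merely phrasing the conclusion via convergent subsequences of a bounded sequence rather than as a sum of two compact operators.
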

			\begin{proof}
				Let $\{f_n\}$ be a bounded sequence in $L^2(B(r)^{\ast},L_k\otimes S_{\mathbb{R}^3})\cap\mathrm{Dom}(\dirac^{\pm}_{(A_k,\Phi_k)})$.
				By using $G^{\pm}$ in Proposition \ref{Prp:Dol. lemma for Dirac monopoles},
				we set $\tilde{f}_n := G^{\pm}(\dirac^{\pm}_{(A_k,\Phi_k)}(f_n))$.
				Since $G^{\pm}$ is compact,
				there exists a subsequence $\{f_{n_k}\}$ such that $\{\tilde{f}_{n_k}\}$ is convergent.
				Hence we may assume that $\{\tilde{f}_n\}$ is convergent.
				Then we have $\dirac^{\pm}_{(A_k,\Phi_k)}(f_n-\tilde{f}_n)=0$.
				By Corollary \ref{Cor:Cptness of Kernel},
				$\{(f_n-\tilde{f}_n)|_{B(r')}\}$ has a convergent subsequence.
				Therefore $\{f_n|_{B(r')}\}$ also has a convergent subsequence.
			\end{proof}
		\subsection{A local lift of the Dirac operators of the flat Dirac monopoles}
			Let $k\in\mathbb{Z}$ be an integer.
			For the flat Dirac monopole $(V,h,A,\Phi):=(L_k,h_k,A_k,\sqrt{-1}k/2r_3)$ on $(\mathbb{R}^3,\{0\})$,
			we denote by $(V_4,h_4,A_4)$ the prolongation of the instanton $(\pi^{\ast}V,\pi^{\ast}h,\pi^{\ast}A - \xi\otimes\pi^{\ast}\Phi)$ over $\mathbb{R}^4$,
			where $\xi= 2\{(x_1dy_1-y_1dx_1) - (x_2dy_2-y_2dx_2)\}$.
			We compare the Dirac operators $\dirac^{\pm}_{(A,\Phi)}$ and $\dirac^{\pm}_{A_4}$.
			
			We denote by $X$ and $P$ the punctured spaces $\mathbb{R}^3\setminus\{0\}$ and $\mathbb{R}^4\setminus\{0\}$ respectively.
			Set the function $f:\mathbb{R}^3\setminus\{0\}\to \mathbb{R}_+$ to be $f(t,x,y):=1/2r_3$.
			We also set $g_P := 2g_{4,\mathrm{Euc}}$.
			Since $g_P = 2g_{4,\mathrm{Euc}}= \pi^{\ast}f(\pi^{\ast}g + \xi^2)$,
			we have the orthogonal decomposition $TP \simeq \mathbb{R}\partial_{\theta}\oplus \pi^{\ast}TX$.
			Let $\mathscr{S}$ be the spin structure of $\mathbb{R}^3$\;\textit{i.e.}\;$\mathscr{S}$ is a principal $Spin(3)$-bundle on $\mathbb{R}^3$ that satisfies $\mathscr{S}\times_{Spin(3)}(\mathbb{R}^3,g_{3,Euc}) \simeq T\mathbb{R}^3$.
			Let $\rho:Spin(3)\to Spin(4)$ be the lift of the homomorphism $SO(3)\to SO(4)$ which is induced by $\mathbb{R}^3\ni p \to (0,p)\in\mathbb{R}^4$.
			We set $\mathscr{S}_4 := \pi^{\ast}(\mathscr{S})\times_{\rho}Spin(4)$.
			Then we have $\mathscr{S}_4 \times_{Spin(4)} (\mathbb{R}^4\setminus\{0\}) \simeq (P\times \mathbb{R}) \oplus \pi^{\ast}TX$,
			and hence $\mathscr{S}_4$ is a spin structure on $P$.
			Under the isomorphisms $Spin(3)\simeq SU(2)$ and $Spin(4)\simeq SU(2)_{+}\times SU(2)_{-}$,
			the homomorphism $\rho$ is written as $\rho(g)=(g,g)$.
			Therefore we have the following proposition.
			\begin{Prp}\label{Prp:local lift of clif.}
				The following claims are satisfied.
				\begin{itemize}
					\item 
						We have the unitary isomorphisms $\pi^{\ast}S_X \simeq S^{\pm}_{P}$.
					\item 
						Under the above isomorphisms,
						the Clifford product on $P$ can be represented as follows:
						\begin{alignat*}{3}
							&\mathrm{clif}_{P}(\xi)&=&
								(\pi^{\ast}f)^{-1/2}\left(
								\begin{array}{cc}
								0 & \mathrm{Id}\\
								-\mathrm{Id} & 0
								\end{array}
								\right).&&\\
							&\mathrm{clif}_{P}(\pi^{\ast}\alpha)\ &=&
								(\pi^{\ast}f)^{-1/2}\left(
								\begin{array}{cc}
								0 & \mathrm{clif}_{X}(\alpha)\\
								\mathrm{clif}_{X}(\alpha) & 0
								\end{array}
								\right)&&\ \ \ \ \ (\alpha\in\Gamma(X,\Omega^1(X))).\\
						\end{alignat*}
				\end{itemize}
			\end{Prp}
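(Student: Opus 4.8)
The plan is to reduce the statement to a pointwise fact about the decomposition of the complex spinor module of $Spin(4)$ under the subgroup $\rho(Spin(3))$, transported to bundles through $\mathscr{S}_4=\pi^{\ast}\mathscr{S}\times_{\rho}Spin(4)$; the only analytic point is that the adapted coframe of $(P,g_P)$ introduced below is genuinely orthonormal. First I would prove the first bullet purely representation-theoretically. Under $Spin(3)\simeq SU(2)$ and $Spin(4)\simeq SU(2)_{+}\times SU(2)_{-}$ the complex spinor module is $\Delta_3=\mathbb{C}^2$, the standard $SU(2)$-module, and the half-spinor modules $\Delta_4^{\pm}$ are the standard modules of the two $SU(2)$-factors, inflated to the product. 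Since $\rho(g)=(g,g)$, restriction along $\rho$ gives unitary $Spin(3)$-isomorphisms $\rho^{\ast}\Delta_4^{\pm}\simeq\Delta_3$, and hence, via the identity $(P\times_{\rho}G)\times_{G}W\simeq P\times_{H}(\rho^{\ast}W)$ for an extension of structure group,
\[
	S_{P}^{\pm}=\mathscr{S}_4\times_{Spin(4)}\Delta_4^{\pm}\simeq\pi^{\ast}\mathscr{S}\times_{Spin(3)}\rho^{\ast}\Delta_4^{\pm}\simeq\pi^{\ast}\mathscr{S}\times_{Spin(3)}\Delta_3=\pi^{\ast}S_{X},
\]
all arrows being fibrewise unitary because the representations involved are unitary. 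This is the first bullet.

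For the Clifford formulas I would work with the coframe of $P$ adapted to the orthogonal decomposition $TP\simeq\mathbb{R}\partial_\theta\oplus\pi^{\ast}TX$. Fix an oriented orthonormal coframe $(\alpha_1,\alpha_2,\alpha_3)$ of $(X,g)$. From $g_P=\pi^{\ast}f(\pi^{\ast}g+\xi^2)$ and the relations $\xi=\pi^{\ast}f\cdot\omega$, $\omega(\partial_\theta)=1$ of Lemma \ref{Lem:prepare of Pauly condition}, one computes $|\xi|_{g_P}^{2}=(\pi^{\ast}f)^{-1}=|\pi^{\ast}\alpha_i|_{g_P}^{2}$ and $\xi\perp_{g_P}\pi^{\ast}\alpha_i$, so that
\[
	e_0:=(\pi^{\ast}f)^{1/2}\xi,\qquad e_i:=(\pi^{\ast}f)^{1/2}\pi^{\ast}\alpha_i\quad(i=1,2,3)
\]
is a $g_P$-orthonormal coframe of $P$. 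Because $\mathscr{S}_4$ is obtained from $\pi^{\ast}\mathscr{S}$ by adjoining the $\partial_\theta$-direction as the new (``$0$th'') coordinate via $\rho$, the local section of $\mathscr{S}_4$ determined by $(e_0,e_1,e_2,e_3)$ is the $\rho$-image of the pullback of the section of $\mathscr{S}$ determined by $(\alpha_1,\alpha_2,\alpha_3)$. Writing the Clifford action in the local trivializations of $S^{\pm}_P$ and of $\pi^{\ast}S_X$ induced respectively by $(e_0,e_1,e_2,e_3)$ and by $(\alpha_1,\alpha_2,\alpha_3)$, the relation $\rho(g)=(g,g)$ then forces $\mathrm{clif}_P$ to act by the standard generators of the ``$e_0$-adjoined'' Clifford module structure on $\Delta_3\oplus\Delta_3$, namely
\[
	\mathrm{clif}_P(e_0)=\begin{pmatrix}0&\mathrm{Id}\\-\mathrm{Id}&0\end{pmatrix},\qquad \mathrm{clif}_P(e_i)=\begin{pmatrix}0&\mathrm{clif}_X(\alpha_i)\\\mathrm{clif}_X(\alpha_i)&0\end{pmatrix}.
\]
Here the equality of the two off-diagonal blocks of $\mathrm{clif}_P(e_i)$ and the off-diagonal shape of $\mathrm{clif}_P(e_0)$ are forced by $Spin(3)$-equivariance together with $\mathrm{clif}_P(e_j)^2=-1$ and $\mathrm{clif}_P(e_0)\mathrm{clif}_P(e_i)+\mathrm{clif}_P(e_i)\mathrm{clif}_P(e_0)=0$, using that the $\mathrm{Cl}(3)$-module structure on $\Delta_3$ giving $\mathrm{clif}_X$ is the unique one compatible with the paper's normalization of $\mathrm{clif}_X(\mathrm{vol}_{(X,g)})$. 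Since $\xi=(\pi^{\ast}f)^{-1/2}e_0$ and $\pi^{\ast}\alpha=(\pi^{\ast}f)^{-1/2}\sum_i\langle\alpha,\alpha_i\rangle_g\,e_i$ for a general $1$-form $\alpha$ on $X$, the two displayed formulas of the proposition follow by linearity of $\mathrm{clif}_P$.

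The step I expect to be the genuine obstacle is the orientation and sign bookkeeping: one must check that the decomposition $S_P=S^{+}_P\oplus S^{-}_P$ produced above --- an eigenspace decomposition of the $Spin(4)$ chirality element --- is the one attached to the given orientation of $P=\mathbb{R}^4\setminus\{0\}$, and that the sign in $\mathrm{clif}_P(\xi)$ is the stated one rather than its negative. Concretely one computes the ordered product of the four matrices $\mathrm{clif}_P(e_j)$, compares it with the chirality element, and determines the sign of $\xi\wedge\pi^{\ast}(\mathrm{vol}_{(X,g)})$ relative to $\mathrm{vol}_{(P,g_P)}$; in the flat model at hand ($g=g_{3,\mathrm{Euc}}$, $f=1/2r_3$, $g_P=2g_{4,\mathrm{Euc}}$, $\xi=2(-y_1dx_1+x_1dy_1+y_2dx_2-x_2dy_2)$, and the $S^1$-action $\theta\cdot(z_1,z_2)=(e^{\sqrt{-1}\theta}z_1,e^{-\sqrt{-1}\theta}z_2)$) this is a finite explicit computation. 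Depending on its outcome one may first have to cyclically reorder $(e_0,e_1,e_2,e_3)$ --- equivalently to interchange the two summands and/or replace $\xi$ by $-\xi$ --- and the content of the proposition is precisely that, with the paper's fixed choices, these adjustments reproduce exactly the matrices above. The remaining steps are routine.
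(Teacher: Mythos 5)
Your proposal is correct and follows essentially the same route as the paper, which derives this proposition directly from the identification $\rho(g)=(g,g)$ under $Spin(3)\simeq SU(2)$, $Spin(4)\simeq SU(2)_{+}\times SU(2)_{-}$ together with a direct computation in the adapted orthonormal coframe (the paper in fact leaves almost all of these details, including the $(\pi^{\ast}f)^{-1/2}$ normalization and the sign check, implicit). Your write-up simply makes explicit what the paper treats as immediate.
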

			Since the isomorphisms $\pi^{\ast}S_X \simeq S^{\pm}_{P}$ are unitary,
			we have $||\pi^{\ast}s||^2_{L^2(P)} = \int_{P} |\pi^{\ast}s|^2 (-\pi^{\ast}f^2\cdot\xi\wedge \pi^{\ast}d\mathrm{vol}_{X})= 2\pi||f^{1/2}s||^2_{L^2(X)}$ for any $s\in \Gamma(X,V\otimes S_X)$.
			Hence the operator $\pi^{\dagger}(s) := \pi^{\ast}((2\pi f)^{-1/2}\,s)$ are isometric isomorphisms between $L^2(X,V\otimes S_X)$ and $L^2(P,V_4\otimes S^{\pm}_P)^{S^1}$.
			
			On one hand,
			we take a global flat unitary frame $\mb{e}^3=(e^{3}_1,e^{3}_2)$ of $S_X$ that satisfies the following.
			\begin{alignat*}{2}
				&\mathrm{clif}_{X}(dt)\mb{e}^3&=&\mb{e}^3
					\left(
					\begin{array}{cc}
					\sqrt{-1} & 0\\
					0 & -\sqrt{-1}
					\end{array}
					\right).\\
				&\mathrm{clif}_{X}(dx)\mb{e}^3&=&\mb{e}^3
					\left(
					\begin{array}{cc}
					0 & -1\\
					1 & 0
					\end{array}
					\right).\\
				&\mathrm{clif}_{X}(dt)\mb{e}^3&=&\mb{e}^3
					\left(
					\begin{array}{cc}
					0 & \sqrt{-1}\\
					\sqrt{-1} & 0
					\end{array}
					\right).\\
			\end{alignat*}
			On the other hand,
			we have the isomorphisms $S^{+}_{P}\simeq \Omega^{0,0}_{\mathbb{C}^2} \oplus \Omega^{0,2}_{\mathbb{C}^2}$ and $S^{-}_{P} \simeq \Omega^{0,1}_{\mathbb{C}^2}$.
			Moreover,
			under the isomorphisms we also have $\dirac_{A_4} = \sqrt{2}(\deebar_{A_4} + \deebar_{A_4}^{\bigstar})$ and $\mathrm{clif}_{P}(\alpha) = \sqrt{2}( \alpha^{(0,1)}\wedge - \lrcorner(\alpha^{(1,0)})^{\flat})$ for a $1$-form $\alpha$ on $P$,
			where $\lrcorner$ means the interior product and $(\alpha)^{\flat}$ is the image of $\alpha$ under the isomorphism $\Omega^{1,0}_{\mathbb{C}^2}\simeq T^{(0,1)}\mathbb{C}^2$ induced by the metric $g_P$.
			Here we set $S^1$-invariant global unitary frames $\mb{e}^{\pm}=(e^{\pm}_{1},e^{\pm}_{2})$ of $S^{\pm}_P$ to be the following.
			\begin{alignat*}{2}
				&e^{+}_1 &:=&
					1.\\
				&e^{+}_2 &:=& 
					-\left(\pi^{\ast}(-\xi)^{0,1}/|\pi^{\ast}(-\xi)^{0,1}|\right)\wedge \left(\pi^{\ast}(d\bar{z})/|\pi^{\ast}(d\bar{z})|\right).\\
				&e^{-}_1 &:=&
					\;\pi^{\ast}(-\xi)^{0,1}/|\pi^{\ast}(-\xi)^{0,1}|.\\
				&e^{+}_2 &:=&
					\;\pi^{\ast}(d\bar{z})/|\pi^{\ast}(d\bar{z})|.
			\end{alignat*}
			Then,
			with respect to the frames $\mb{e}^{\pm}$ and $\mb{e}^3$,
			the representations of Clifford products of $X$ and $P$ coincide as in the sense of Proposition \ref{Prp:local lift of clif.}.
			Therefore we may assume $\pi^{\ast}\mb{e}^3 = \mb{e}^{\pm}$.
			Hence by a direct calculation we obtain the following proposition.
			\begin{Prp}\label{Prp:Local lift of Dirac op.}
				For the flat Dirac monopole $(V,h,A,\Phi)$,
				the equalities
				\[
					\pi^{\dagger}\comp\left(\dirac^{+}_{(A,\Phi)}f^{-1/2}\right)(s)
					=
					\dirac^{+}_{A_4} \comp \pi^{\dagger}(s)
				\]
				and
				\[
					\pi^{\dagger}\comp\left(f^{-1/2}\dirac^{-}_{(A,\Phi)}\right)(s)
					=
					\dirac^{+}_{A_4} \comp \pi^{\dagger}(s)
				\]
				are satisfied for any $s\in\Gamma(X,V\otimes S_X)$.
			\end{Prp}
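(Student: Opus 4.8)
The plan is to carry out the direct computation indicated in the statement, organized around the orthogonal splitting $TP \simeq \mathbb{R}\partial_\theta \oplus \pi^{\ast}TX$ and the closed form $\nabla^{A_4} = \pi^{\ast}\nabla_A - \xi\otimes\pi^{\ast}\Phi$ of the prolonged connection on $P = \mathbb{R}^4\setminus\{0\}$. From $\omega(\partial_\theta) = 1$ one gets $\xi(\partial_\theta) = \pi^{\ast}f$ and $g_P(\partial_\theta,\partial_\theta) = (\pi^{\ast}f)^3$, so the unit vertical covector is $(\pi^{\ast}f)^{1/2}\xi$, whose Clifford action, by Proposition \ref{Prp:local lift of clif.}, is the constant block matrix interchanging the two summands $\pi^{\ast}S_X \simeq S^{\pm}_P$. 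The first step is to write $\dirac_{A_4} = \mathrm{clif}_P\comp\nabla^{A_{S_P}\otimes A_4}$ as this vertical term plus a horizontal term $\sum_j \mathrm{clif}_P\bigl((\pi^{\ast}f)^{1/2}\pi^{\ast}\alpha_j\bigr)\nabla^{A_{S_P}\otimes A_4}_{(\pi^{\ast}f)^{-1/2}\pi^{\ast}e_j}$ for a $g$-orthonormal frame $(e_j)$ of $TX$ with dual coframe $(\alpha_j)$, and then to apply it to $\pi^{\dagger}(s) = \pi^{\ast}\bigl((2\pi f)^{-1/2}s\bigr)$, using that this section is $S^1$-invariant and that, in the chosen $S^1$-invariant frames $\mb{e}^{\pm}$ with $\pi^{\ast}\mb{e}^3 = \mb{e}^{\pm}$, its coefficient functions are pulled back from $X$.

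For the vertical term, $S^1$-invariance gives $\nabla^{A_{S_P}\otimes A_4}_{\partial_\theta}\pi^{\dagger}(s) = \Theta\cdot\pi^{\dagger}(s) - \xi(\partial_\theta)\,\pi^{\dagger}(\Phi s)$, where $\Theta$ is the value on $\partial_\theta$ of the spin connection of $g_P$ in the frame $\mb{e}^{\pm}$, an explicit fibrewise-constant endomorphism. Using $\xi(\partial_\theta) = \pi^{\ast}f$, the elementary identity $(\pi^{\ast}f)^{-1/2}\pi^{\dagger}(\Phi s) = \pi^{\dagger}(f^{-1/2}\Phi s)$, and the block form of $\mathrm{clif}_P\bigl((\pi^{\ast}f)^{1/2}\xi\bigr)$, the vertical contribution to $\dirac^{+}_{A_4}\comp\pi^{\dagger}(s)$ becomes $\pm\,\pi^{\dagger}\bigl(\Phi(f^{-1/2}s)\bigr)$ plus the contribution of $\Theta$. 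For the horizontal term I would use Proposition \ref{Prp:local lift of clif.} once more to replace each $\mathrm{clif}_P(\pi^{\ast}\alpha_j)$ by $(\pi^{\ast}f)^{-1/2}\mathrm{clif}_X(\alpha_j)$ in the relevant block, and expand $\nabla^{A_{S_P}\otimes A_4}$ along the horizontal directions as $\pi^{\ast}\nabla^{A_{S_X}\otimes A}$ plus the correction terms forced by $g_P = \pi^{\ast}f(\pi^{\ast}g + \xi^2)$ being non-product, i.e. the terms built from $df$ and from the curvature $d\omega = \pi^{\ast}(\ast(-df))$ (Lemma \ref{Lem:prepare of Pauly condition}). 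The content of the ``direct calculation'' is that these corrections exactly absorb the terms in which $\dirac_A$ differentiates the conformal weights $(2\pi f)^{-1/2}$ and $f^{-1/2}$, so that the horizontal part of $\dirac^{+}_{A_4}\comp\pi^{\dagger}$ is $\pi^{\dagger}\comp\dirac_A$, with $f^{-1/2}$ placed on the left or the right according to chirality; equivalently, one may run the same computation on the $\deebar_{A_4}$-side via $\dirac_{A_4} = \sqrt{2}(\deebar_{A_4} + \deebar_{A_4}^{\bigstar})$ and the frames $\mb{e}^{\pm}$. Assembling the two parts gives the first identity, and the second follows in the same way, or formally by taking adjoints, since $\pi^{\dagger}$ is an isometry and $\dirac^{+}_{A_4}$, $\dirac^{-}_{A_4}$ (resp.\ $\dirac^{+}_{(A,\Phi)}$, $\dirac^{-}_{(A,\Phi)}$) are formally adjoint under the identifications $\pi^{\ast}S_X \simeq S^{\pm}_P$.

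The only genuine obstacle is the bookkeeping of the spin connection of the non-product metric $g_P$: one must check that its vertical component $\Theta$ is precisely the shift that turns the bare Higgs term into $\pm\Phi$ with the correct sign in each chirality block and that is compatible with the geometric shift appearing in Nakajima's formula for $\dirac_{\mathbb{R}^3}$, and that its horizontal components cancel against the derivatives of the conformal factors so that no spurious zeroth- or first-order terms survive. For the flat Dirac monopole this is unobstructed in principle, since $g_P = 2g_{4,\mathrm{Euc}}$ is flat and $f = 1/2r_3$, $\xi$ are given in closed form: the whole identity reduces to an explicit matrix computation in the frames $\mb{e}^3$ and $\mb{e}^{\pm}$, which one then matches against the already-derived expression for $\dirac^{\pm}_{(A_k,\Phi_k)}$ in terms of $\dirac^{\pm}_{\mathcal{O}(k)}$ and $r_3\partial_{r_3}$ and against the corresponding $S^1$-weight decomposition of $\dirac^{+}_{A_4}$ on $\mathbb{C}^2$. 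Since each assertion is an equality of differential operators, it suffices to verify it on $\Gamma(\mathbb{R}^3\setminus\{0\}, V\otimes S_X)$.
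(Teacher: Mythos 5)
Your proposal is correct and follows essentially the same route as the paper: the paper sets up the identification $\pi^{\ast}\mb{e}^3=\mb{e}^{\pm}$ and the Clifford-product formulas of Proposition \ref{Prp:local lift of clif.}, and then simply asserts the result "by a direct calculation," which is exactly the vertical/horizontal computation you spell out. Your bookkeeping of the conformal factors (e.g. $g_P(\partial_\theta,\partial_\theta)=(\pi^{\ast}f)^3$, the unit covector $(\pi^{\ast}f)^{1/2}\xi$, and $(\pi^{\ast}f)^{-1/2}\pi^{\dagger}(\Phi s)=\pi^{\dagger}(f^{-1/2}\Phi s)$) is consistent with the paper's setup.
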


\section{An index formula of Dirac operators on compact $3$-folds}\label{Sec:CompactCase}
		Let $(X,g)$ be a closed oriented spin $3$-fold and $Z$ a finite subset.
		Let $\mathscr{S}$ be a spin structure on $(X,g)$ \,\textit{i.e.} $\mathscr{S}$ is a principal $Spin(3)$-bundle on $X$ such that $\mathscr{S}\times_{Spin(3)}(\mathbb{R}^3,g_{3,\mathrm{Euc}}) \simeq (TX,g)$.
		Let $(V,h,A,\Phi)$ be a Dirac-type singular monopole on $(X,Z)$ of rank $r$,
		and we denote by $\vec{k}_p=(k_{p,i}) \in \mathbb{Z}^r$ the weight of $(V,h,A,\Phi)$ at each $p\in Z$.
		\subsection{Fredholmness of Dirac operators}
			For a sufficiently small $\eps>0$,
			we set $B(Z,\eps):= \coprod_{p\in Z}B(p,\eps) = \coprod_{p\in Z} \{x\in X\mid d_g(x,p)<\eps\}$,
			where $d_g:X\times X \to \mathbb{R}$ is the distance function with respect to $g$.
			Let $(x_p^1,x_p^2,x_p^3)$ be a normal coordinate at $p$ on $B(p,\eps)$,
			and set the flat metric $g'$ on $B(Z,\eps)$ to be $g'|_{B(p,\eps)} := \sum_i (dx_p^i)^2$.
			We take a smooth bump function $\rho:X\to[0,1]$ satisfying $\rho(B(Z,\eps/2))=1$ and $\rho(X \setminus B(Z,3\eps/4))=0$,
			and set a metric $\tilde{g}:=(1-\rho)g + \rho\cdot g'$.
			Then we have $g|_{X\setminus B(Z,\eps)} = \tilde{g}|_{X\setminus B(Z,\eps)}$ and $|g-\tilde{g}|_{g}= O(R_p^2)$ on $B(p,\eps)$ for any $p\in Z$,
			where $R_p$ is the distance from $p$.
			Hence there exists an isometric isomorphism $\mu:(TX,g)\simeq (TM,\tilde{g})$ such that $\mu|_{X\setminus B(Z,\eps)} = \mathrm{Id}_{TM}$ and $|\mu-\mathrm{Id}_{TM}|_{g}= O(R_p^2)$ on $B(p,\eps)$ for any $p\in Z$.
			Therefore we obtain the following lemma.
			\begin{Lem}\label{Lem:Pertubation of Clif prod.}
				For a $1$-form $\alpha$,
				we have an equality $\mathrm{clif}_{(X,g)}(\alpha)|_{X\setminus B(Z,\eps)} = \mathrm{clif}_{(X,\tilde{g})}(\alpha)|_{X\setminus B(Z,\eps)}$ and an estimate $|\mathrm{clif}_{(X,g)}(\alpha)-\mathrm{clif}_{(X,\tilde{g})}(\alpha)| =  |\alpha|\cdot O(R_p^2)$ on $B(p,\eps)$ for any $p\in Z$,
				where $\mathrm{clif}_{(X,g)}$ and $\mathrm{clif}_{(X,\tilde{g})}$ denote the Clifford product with respect to $g$ and $\tilde{g}$ respectively.
			\end{Lem}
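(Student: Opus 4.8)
The plan is to read off everything from the isometry $\mu\colon(TX,g)\simeq(TX,\tilde{g})$ constructed just before the statement, which covers $\mathrm{id}_X$, equals $\mathrm{Id}$ on $X\setminus B(Z,\eps)$, and obeys $|\mu-\mathrm{Id}|_g=O(R_p^2)$ on each $B(p,\eps)$. First I would lift $\mu$ to the spin level: since $\mu$ is the identity over $X\setminus B(Z,\eps)$ and each $B(p,\eps)$ is simply connected, $\mu$ has a unique lift --- extending the identity --- from $\mathscr{S}$ onto the spin structure of $(X,\tilde{g})$ obtained by transport along $\mu$, and this induces a unitary bundle isomorphism $\hat{\mu}\colon S_{(X,g)}\to S_{(X,\tilde{g})}$ equal to the identity over $X\setminus B(Z,\eps)$. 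Using $\hat{\mu}$ to identify the two spinor bundles, both $\mathrm{clif}_{(X,g)}(\alpha)$ and $\mathrm{clif}_{(X,\tilde{g})}(\alpha)$ act on the single bundle $S_{(X,g)}$, and the naturality of Clifford multiplication under isometries gives the key identity
\[
	\mathrm{clif}_{(X,g)}(\alpha)=\mathrm{clif}_{(X,\tilde{g})}(\mu^{\vee}\alpha),
\]
where $\mu^{\vee}\colon T^{\ast}X\to T^{\ast}X$ is the isometry of cotangent spaces induced by $\mu$ (explicitly, lower the index of $\mu(\alpha^{\sharp_g})$ with $\tilde{g}$).

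Granting this identity, the rest is immediate. Subtracting, $\mathrm{clif}_{(X,g)}(\alpha)-\mathrm{clif}_{(X,\tilde{g})}(\alpha)=\mathrm{clif}_{(X,\tilde{g})}\bigl((\mu^{\vee}-\mathrm{Id})\alpha\bigr)$. On $X\setminus B(Z,\eps)$ we have $\mu=\mathrm{Id}$, hence $\mu^{\vee}=\mathrm{Id}$ and the right-hand side vanishes, which is the asserted equality. For the estimate on $B(p,\eps)$ I would invoke $\mathrm{clif}_{(X,\tilde{g})}(\beta)^2=-|\beta|^2_{\tilde{g}}\,\mathrm{Id}$, so that $\mathrm{clif}_{(X,\tilde{g})}(\beta)$ has operator norm exactly $|\beta|_{\tilde{g}}$; since $g$ and $\tilde{g}$ are uniformly equivalent on $B(Z,\eps)$ (both being $\delta_{ij}+O(R_p^2)$ in the chosen normal coordinates), this is at most a fixed constant times $|\beta|_g$. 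Applying this with $\beta=(\mu^{\vee}-\mathrm{Id})\alpha$ and using $|\mu^{\vee}-\mathrm{Id}|_g=O(R_p^2)$ --- which follows from $|\mu-\mathrm{Id}|_g=O(R_p^2)$ by dualizing, the two metrics being $\delta_{ij}+O(R_p^2)$ --- yields $|\mathrm{clif}_{(X,g)}(\alpha)-\mathrm{clif}_{(X,\tilde{g})}(\alpha)|=|\alpha|\cdot O(R_p^2)$ on $B(p,\eps)$, as required.

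I do not expect a genuine obstacle here. The only delicate point is the first one: to compare the two Clifford products one must first put them on a common Clifford module, i.e.\ fix the identification $\hat{\mu}$ of $S_{(X,g)}$ with $S_{(X,\tilde{g})}$ by lifting $\mu$ through the spin structures. Once that is in place, the statement reduces entirely to the already-established estimate $|\mu-\mathrm{Id}|_g=O(R_p^2)$ together with the elementary fact that Clifford multiplication by a covector acts with operator norm equal to the length of that covector.
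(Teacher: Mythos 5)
Your argument is exactly the one the paper intends: the lemma is stated as an immediate consequence of the isometry $\mu$ with $\mu|_{X\setminus B(Z,\eps)}=\mathrm{Id}$ and $|\mu-\mathrm{Id}|_g=O(R_p^2)$, and you have simply filled in the routine details (lifting $\mu$ to the spinor bundles, naturality of Clifford multiplication, and the operator-norm bound). The proposal is correct and matches the paper's approach.
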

		
			We also take a direct sum of the flat Dirac monopoles $(V',h',A',\Phi')$ on $(B(Z,\eps) \setminus Z,g')$ to be $(V',h',A',\Phi')|_{B(p,\eps)} = \bigoplus_{i=1}^r (L_{k_{p,i}},h_{k_{p,i}},A_{k_{p,i}},\Phi_{k_{p,i}})$ for any $p\in Z$.
			By Corollary \ref{Cor:Appro. by Dirac monopole},
			there exists a unitary isomorphism $\varphi:V|_{B(Z,\eps) \setminus Z} \simeq V'$ such that the estimates in Corollary \ref{Cor:Appro. by Dirac monopole} are satisfied.
			We set a connection $\tilde{A}:=(1-\rho)A + \rho\cdot \varphi^{\ast}A'$ and an endomorphism $\tilde{\Phi}:=(1-\rho)\Phi + \rho\cdot \varphi^{\ast}\Phi'$.
			Then for each $p\in Z$ the restriction $(V,h,\tilde{A},\tilde{\Phi})|_{B(p,\eps/2)\setminus\{p\}}$ is a direct sum of the flat Dirac monopoles,
			and $|A-\tilde{A}|$ and $|\Phi- \tilde{\Phi}|$ are bounded on $X\setminus Z$.
			
			We denote by $\tilde{\dirac}^{\pm}_{(A,\Phi)}$ and $\tilde{\dirac}^{\pm}_{(\tilde{A},\tilde{\Phi})}$ the Dirac operators of the tuples $(V,h,A,\Phi)$ and $(V,h,\tilde{A},\tilde{\Phi})$ with respect to the metrics $\tilde{g}$ respectively.
			In Proposition \ref{Prp:formal adj. eq. ana. adj.},
			we show the Fredholmness of $\tilde{\dirac}^{\pm}_{(\tilde{A},\tilde{\Phi})}$.
			Consequently,
			we will prove the Fredholmness of $\dirac^{\pm}_{(A,\Phi)}$ in Theorem \ref{Thm:Index of modi. and orig. Dirac}.
			\begin{Prp}\label{Prp:Relich thm for modified monopole}
				The injection maps $\mathrm{Dom}(\tilde{\dirac}^{\pm}_{(\tilde{A},\tilde{\Phi})}) \to L^2(X,V\otimes S_X)$ are compact.
			\end{Prp}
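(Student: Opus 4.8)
The plan is a standard localization-and-patching argument. I would split $X$ into a punctured neighborhood of $Z$, where $\tilde\dirac^{\pm}_{(\tilde A,\tilde\Phi)}$ is unitarily equivalent to a direct sum of the flat Dirac monopole Dirac operators studied in Subsection~\ref{subsec:Loc. prop. of flat Dirac mon.}, and its complement, where $\tilde\dirac^{\pm}_{(\tilde A,\tilde\Phi)}$ is a first-order elliptic operator with smooth coefficients on a compact manifold with boundary. Compactness of the inclusion on the first piece is exactly Corollary~\ref{Cor:Relich type thm for Dirac monopole}, and on the second piece it is the classical Rellich--Kondrachov theorem together with interior elliptic estimates; the proposition then follows by a diagonal argument and the fact that $Z$ is a null set.

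First I would pin down the structure of $\tilde\dirac^{\pm}_{(\tilde A,\tilde\Phi)}$ near $Z$. Since $\rho\equiv 1$ on $B(Z,\eps/2)$, on this set $\tilde g=g'$ is flat in the chosen normal coordinates, $\tilde A=\varphi^{\ast}A'$, and $\tilde\Phi=\varphi^{\ast}\Phi'$; as the spin structure on a small ball is the trivial one, the unitary isomorphism $\varphi$ of Corollary~\ref{Cor:Appro. by Dirac monopole} identifies $\tilde\dirac^{\pm}_{(\tilde A,\tilde\Phi)}$ on $B(p,\eps/2)\setminus\{p\}$ with the direct sum $\bigoplus_{i=1}^{r}\dirac^{\pm}_{(A_{k_{p,i}},\Phi_{k_{p,i}})}$. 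I would also record that $\tilde A,\tilde\Phi$ are smooth on $X\setminus Z$ and $\tilde g$ is smooth on all of $X$, so that $\tilde\dirac^{\pm}_{(\tilde A,\tilde\Phi)}$ is an elliptic operator with smooth coefficients on the open set $X\setminus\overline{B(Z,\eps/8)}$.

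Next, given a bounded sequence $\{s_n\}$ in $\mathrm{Dom}(\tilde\dirac^{\pm}_{(\tilde A,\tilde\Phi)})$, I would extract a subsequence convergent in $L^2(X,V\otimes S_X)$ in two stages. Since $\tilde\dirac^{\pm}_{(\tilde A,\tilde\Phi)}$ is local, the restrictions $s_n|_{B(Z,\eps/2)\setminus Z}$ form a bounded sequence in $L^2\cap\mathrm{Dom}$ of the flat Dirac monopole operator (the graph norm only decreases under restriction), so Corollary~\ref{Cor:Relich type thm for Dirac monopole}, applied summand by summand with $r=\eps/2$ and $r'=\eps/3$, yields a subsequence converging in $L^2(B(Z,\eps/3)\setminus Z)$. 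Along that subsequence, interior elliptic (G\aa rding) estimates on $X\setminus\overline{B(Z,\eps/8)}$ give a uniform $L^2_1$-bound on the compact-with-boundary region $X\setminus B(Z,\eps/6)$, and Rellich--Kondrachov produces a further subsequence converging in $L^2(X\setminus B(Z,\eps/6))$. Because $\bigl(B(Z,\eps/3)\setminus Z\bigr)\cup\bigl(X\setminus B(Z,\eps/6)\bigr)=X\setminus Z$ and $Z$ has measure zero, this last subsequence is Cauchy, hence convergent, in $L^2(X,V\otimes S_X)$.

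The only substantive input is Corollary~\ref{Cor:Relich type thm for Dirac monopole}, which itself rests on the parametrix $G^{\pm}$ of Proposition~\ref{Prp:Dol. lemma for Dirac monopoles}; away from $Z$ everything is classical. Consequently the points that actually require care are the precise identification of $\tilde\dirac^{\pm}_{(\tilde A,\tilde\Phi)}$ on $B(Z,\eps/2)\setminus Z$ with a direct sum of flat Dirac monopole Dirac operators --- so that Corollary~\ref{Cor:Relich type thm for Dirac monopole} applies verbatim, which is where one uses $\tilde g=g'$ and $\varphi$ --- and the choice of the nested radii $\eps/8<\eps/6<\eps/3<\eps/2$ so that the inner and outer regions cover $X$ while the outer region sits in the interior of a domain on which the coefficients are smooth.
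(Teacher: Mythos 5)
Your proposal is correct and follows essentially the same route as the paper: the paper likewise covers $X$ by the punctured balls $B(Z,\eps/4)^{\ast}$, where compactness is Corollary~\ref{Cor:Relich type thm for Dirac monopole} (applicable because $(V,h,\tilde{A},\tilde{\Phi})$ restricted to $B(p,\eps/2)\setminus\{p\}$ is by construction a direct sum of flat Dirac monopoles with respect to the flat metric), and the compact complement $X\setminus B(Z,\eps/8)$, where it invokes Rellich--Kondrachov. Your version merely spells out the interior elliptic estimate implicit in the paper's appeal to Rellich--Kondrachov and phrases the patching as a two-stage subsequence extraction rather than via the equivalent norm $\|s\|_{1}$.
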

			\begin{proof}
				The norm $||s||_{1} := ||s|_{X\setminus B(Z,\eps/8)}||_{L^2} + ||s|_{B(Z,\eps/4)^{\ast}}||_{L^2}$ on $L^2(X,V\otimes S_X)$ is equivalent to the ordinary $L^2$-norm on $X$.
				By the Rellich-Kondrachov theorem,
				the restriction maps $\mathrm{Dom}(\tilde{\dirac}^{\pm}_{(\tilde{A},\tilde{\Phi})})\ni s \to s|_{X\setminus B(Z,\eps/8)} \in L^2(X\setminus B(Z,\eps/8), S_X\otimes V)$ are compact.
				By Corollary \ref{Cor:Relich type thm for Dirac monopole},
				the restriction maps $\mathrm{Dom}(\tilde{\dirac}^{\pm}_{(\tilde{A},\tilde{\Phi})})\ni s \to s|_{B(Z,\eps/4)^{\ast}} \in L^2(B(Z,\eps/4), S_X\otimes V)$ are also compact.
				Hence the injection maps $\mathrm{Dom}(\tilde{\dirac}^{\pm}_{(\tilde{A},\tilde{\Phi})}) \to L^2(X,V\otimes S_X)$ are compact.
			\end{proof}
			\begin{Prp}\label{Prp:formal adj. eq. ana. adj.}
				The Dirac operators $\tilde{\dirac}^{\pm}_{(\tilde{A},\tilde{\Phi})}: L^2(X\setminus Z,V\otimes S_X)\to L^2(X \setminus Z,V\otimes S_X)$ are closed Fredholm operators and adjoint to each other.
			\end{Prp}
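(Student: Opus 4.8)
The plan is to prove three things in turn: that $\tilde{\dirac}^{\pm}_{(\tilde{A},\tilde{\Phi})}$ are closed, that $(\tilde{\dirac}^{+}_{(\tilde{A},\tilde{\Phi})})^{\ast}=\tilde{\dirac}^{-}_{(\tilde{A},\tilde{\Phi})}$, and that they are Fredholm, the last being deduced from the first two together with the compact embedding of Proposition~\ref{Prp:Relich thm for modified monopole}. Closedness is automatic from the definition: if $s_n\to s$ and $\tilde{\dirac}^{\pm}_{(\tilde{A},\tilde{\Phi})}(s_n)\to v$ in $L^2$, then $\tilde{\dirac}^{\pm}_{(\tilde{A},\tilde{\Phi})}(s)=v$ holds as a current, so $v\in L^2$ forces $s\in\mathrm{Dom}(\tilde{\dirac}^{\pm}_{(\tilde{A},\tilde{\Phi})})$.

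For the adjoint relation, recall that $\dirac_{\tilde{A}}$ is formally self-adjoint and $\tilde{\Phi}$ is skew-Hermitian, so $\tilde{\dirac}^{+}_{(\tilde{A},\tilde{\Phi})}$ and $\tilde{\dirac}^{-}_{(\tilde{A},\tilde{\Phi})}$ are formal adjoints; pairing against sections in $C^{\infty}_c(X\setminus Z)$ then gives the easy inclusion $(\tilde{\dirac}^{+}_{(\tilde{A},\tilde{\Phi})})^{\ast}\subseteq\tilde{\dirac}^{-}_{(\tilde{A},\tilde{\Phi})}$ (and symmetrically). The reverse inclusion is equivalent to the vanishing, in the limit, of the boundary term over small spheres around $Z$ in the integration-by-parts formula $\langle\,\tilde{\dirac}^{+}_{(\tilde{A},\tilde{\Phi})}s,\,u\,\rangle=\langle\,s,\,\tilde{\dirac}^{-}_{(\tilde{A},\tilde{\Phi})}u\,\rangle$, and I would obtain it by showing that $C^{\infty}_c(X\setminus Z)$ is a core for $\tilde{\dirac}^{\pm}_{(\tilde{A},\tilde{\Phi})}$ (minimal and maximal realizations agree). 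The only possible obstruction to this is localized at the points of $Z$, where $(V,h,\tilde{A},\tilde{\Phi})$ is a direct sum of flat Dirac monopoles, so it suffices to treat $\dirac^{\pm}_{(A_k,\Phi_k)}$ on a punctured ball. Here I would invoke Proposition~\ref{Prp:Dol. lemma for Dirac monopoles} and the explicit mode decomposition of Proposition~\ref{Prp:Ker of Dirac monopole}: any element of the domain is, near the puncture, a convergent superposition of radial modes each comparable to $r^{c}$ with $c\ge-1/2$ (the only strongly singular modes, the $\rho^{\pm}(r)=r^{-1\pm k/2}$ of $\dirac^{-}$, occur only for the sign of $k$ that keeps $c\ge-1/2$). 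Cutting off by functions $\chi_j$ that vanish near the puncture, increase to $1$, and are chosen logarithmically so that $\int|d\chi_j|^{2}\,r^{-1}\,dV\to0$, one gets $\chi_j s\in C^{\infty}_c$ with $\chi_j s\to s$ in $L^2$ and $\|\mathrm{clif}(d\chi_j)\,s\|_{L^2}\to0$, i.e. $\chi_j s\to s$ in the graph norm. This yields the core property and hence $(\tilde{\dirac}^{+}_{(\tilde{A},\tilde{\Phi})})^{\ast}=\tilde{\dirac}^{-}_{(\tilde{A},\tilde{\Phi})}$. I expect this step --- controlling domain elements at the punctures sharply enough to kill the boundary term --- to be the main obstacle; it is exactly what the local analysis of Section~\ref{subsec:Loc. prop. of flat Dirac mon.} supplies.

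Finally, Fredholmness follows by a standard functional-analytic argument. By Proposition~\ref{Prp:Relich thm for modified monopole} the inclusions $\mathrm{Dom}(\tilde{\dirac}^{\pm}_{(\tilde{A},\tilde{\Phi})})\hookrightarrow L^2(X,V\otimes S_X)$ are compact. On $\mathrm{Ker}(\tilde{\dirac}^{+}_{(\tilde{A},\tilde{\Phi})})$ the graph norm coincides with the $L^2$-norm, so its unit ball is compact and the kernel is finite-dimensional. For closedness of the range, take $\tilde{\dirac}^{+}_{(\tilde{A},\tilde{\Phi})}(s_n)\to v$ with $s_n$ orthogonal to the kernel; if $\|s_n\|_{L^2}$ were unbounded, normalizing and applying compactness of the embedding and closedness would produce a unit-norm kernel element orthogonal to the kernel, a contradiction, so $\|s_n\|_{L^2}$ is bounded, the graph norms are bounded, and a subsequence converges in $L^2$ to some $s$ with $\tilde{\dirac}^{+}_{(\tilde{A},\tilde{\Phi})}(s)=v$. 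Hence the cokernel is isomorphic to $\mathrm{Ker}((\tilde{\dirac}^{+}_{(\tilde{A},\tilde{\Phi})})^{\ast})=\mathrm{Ker}(\tilde{\dirac}^{-}_{(\tilde{A},\tilde{\Phi})})$, which is finite-dimensional by the same argument applied to $\tilde{\dirac}^{-}_{(\tilde{A},\tilde{\Phi})}$. Therefore both $\tilde{\dirac}^{\pm}_{(\tilde{A},\tilde{\Phi})}$ are Fredholm.
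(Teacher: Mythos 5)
Your overall architecture is the right one, and your treatment of closedness and of Fredholmness (finite-dimensional kernel from the compact embedding of Proposition \ref{Prp:Relich thm for modified monopole}, closed range by the normalization--contradiction argument, cokernel identified with $\mathrm{Ker}(\tilde{\dirac}^{\mp}_{(\tilde{A},\tilde{\Phi})})$) coincides with the paper's. The easy inclusion $(\tilde{\dirac}^{\pm})^{\ast}\subseteq\tilde{\dirac}^{\mp}$ via testing against $C^{\infty}_0(X\setminus Z)$ is also exactly the paper's step. The issue is the hard inclusion, which you correctly single out as the main obstacle but for which your proposed mechanism has a gap.

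You reduce the hard inclusion to the core property and hence to the graph-norm estimate $\|\mathrm{clif}(d\chi_j)s\|_{L^2}^2=\int|d\chi_j|^2|s|^2\,dV\to 0$ for every domain element $s$. What the local analysis of Section \ref{subsec:Loc. prop. of flat Dirac mon.} actually delivers (elliptic regularity plus Propositions \ref{Prp:Ker of Dirac monopole} and \ref{Prp:Dol. lemma for Dirac monopoles}) is that $s\in L^3$ near the punctures and that $|d\chi_j|$ is dominated, uniformly in $j$, by an $L^3$ function of the form $(r|\log r|)^{-1}$. But $|d\chi_j|^2|s|^2$ is then only controlled in $L^{3/4}$ by H\"older, not in $L^1$, so the graph-norm convergence does not follow. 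To push it through you implicitly assume a pointwise bound $|s|\lesssim r^{-1/2}$ (this is what makes $\int|d\chi_j|^2 r^{-1}dV\to0$ relevant); for a general domain element $s=G^{\pm}(v)+(\text{kernel part})$ such a pointwise bound is not established anywhere --- the mode-by-mode bounds of Lemma \ref{Lem:est. of ode.} carry $\log^{1/2}$ factors and the eigensections $f^{\pm}_{\nu}$ are only controlled in $L^3$ and $L^6$, so summing the modes to a pointwise $O(r^{-1/2})$ bound is exactly the delicate point, and at the borderline the logarithmic correction makes $\int|d\chi_j|^2|s|^2$ fail to tend to $0$ for the standard logarithmic cutoff. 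The paper avoids this entirely by proving a weaker, bilinear statement: for $a\in\mathrm{Dom}(\tilde{\dirac}^{\mp})$ and $b\in\mathrm{Dom}(\tilde{\dirac}^{\pm})$ it shows $\i<\mathrm{clif}(d\psi_n)a,b>_{L^2}\to 0$, whose integrand $|d\psi_n|\,|a|\,|b|$ is a product of \emph{three} $L^3$ functions, hence dominated in $L^1$, and dominated convergence applies. Since the adjointness you want is equivalent to your core property (by taking double adjoints), the clean repair of your argument is to replace the unilateral graph-norm approximation by this trilinear pairing estimate; with that substitution the rest of your proof goes through as written (modulo the routine remark that $\chi_j s$ must also be mollified to land in $C^{\infty}_0$).
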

			\begin{proof}
				We show that $\tilde{\dirac}^{\pm}_{(\tilde{A},\tilde{\Phi})}$ are adjoint to each other.
				For a densely defined closed operator $F$,
				we denote by $F^{\ast}$ the adjoint of $F$.
				Take $\alpha\in\mathrm{Dom}\left((\tilde{\dirac}^{\pm}_{(\tilde{A},\tilde{\Phi})})^{\ast}\right)$.
				Then we have $\i<(\tilde{\dirac}^{\pm}_{(\tilde{A},\tilde{\Phi})})^{\ast}(\alpha),\varphi>_{L^2} = \i<\alpha, \tilde{\dirac}^{\pm}_{(\tilde{A},\tilde{\Phi})}(\varphi)>_{L^2}$ for any $\varphi\in C^{\infty}_{0}(X\setminus Z,V\otimes S_X)$,
				where $C^{\infty}_{0}(X\setminus Z,V\otimes S_X)$ denotes the set of compact-supported smooth sections.
				Therefore $\alpha \in \mathrm{Dom}(\tilde{\dirac}^{\mp}_{(\tilde{A},\tilde{\Phi})})$ and $(\tilde{\dirac}^{\pm}_{(\tilde{A},\tilde{\Phi})})^{\ast}(\alpha) = \tilde{\dirac}^{\mp}_{(\tilde{A},\tilde{\Phi})}(\alpha)$.
				We show the converse.
				Take $a\in\mathrm{Dom}(\tilde{\dirac}^{\mp}_{(\tilde{A},\tilde{\Phi})})$ and $b\in\mathrm{Dom}(\tilde{\dirac}^{\pm}_{(\tilde{A},\tilde{\Phi})})$.
				Because of the elliptic regularity, Proposition \ref{Prp:Ker of Dirac monopole} and Proposition \ref{Prp:Dol. lemma for Dirac monopoles},
				we obtain $|a|,|b| \in L^3(X\setminus Z)$.
				Let $\kappa:\mathbb{R}\to [0,1]$ be a smooth function that satisfies the conditions $\kappa((-\infty,-1])=\{0\}$, $\kappa([-1/2,\infty))=\{1\}$.
				Set $\psi_n(x) = \kappa(n\cdot\log(d_{\tilde{g}}(x,Z)))$ for $n\in\mathbb{N}$,
				where we set $d_{\tilde{g}}(x,Z) := \mathrm{min}\{d_{\tilde{g}}(x,p) \mid p\in Z\}$.
				Since $\psi_n a$ has a compact support on $X\setminus Z$,
				we have $\i<\psi_n a, \tilde{\dirac}^{\pm}_{(\tilde{A},\tilde{\Phi})}(b)>_{L^2} = \i<\tilde{\dirac}^{\mp}_{(\tilde{A},\tilde{\Phi})}(\psi_n a), b>_{L^2} = \i<\psi_n \tilde{\dirac}^{\mp}_{(\tilde{A},\tilde{\Phi})}(a), b>_{L^2} + \i<\mathrm{clif}_{X}(d\psi_n)a,b>_{L^2}$.
				Since we have $|(\kappa(nx))'| \leq (x|\log(x)|)^{-1}\cdot ||\kappa'||_{L^\infty}$ for $0<x<1$,
				$|d\psi_n|$ is dominated by an $L^3$-function that is independent of $n$.
				Hence we obtain $\i<a,\tilde{\dirac}^{\pm}_{(\tilde{A},\tilde{\Phi})}(b)>_{L^2} = \i<\tilde{\dirac}^{\mp}_{(\tilde{A},\tilde{\Phi})}(a),b>_{L^2}$ by the dominated convergence theorem.
				Therefore $a\in \mathrm{Dom}\left((\tilde{\dirac}^{\pm}_{(\tilde{A},\tilde{\Phi})})^{\ast}\right)$ and $(\tilde{\dirac}^{\pm}_{(\tilde{A},\tilde{\Phi})})^{\ast}(a)=\tilde{\dirac}^{\mp}_{(\tilde{A},\tilde{\Phi})}(a)$.
				
				We show that the kernel of $\tilde{\dirac}^{\pm}_{(\tilde{A},\tilde{\Phi})}$ is finite-dimensional.
				By Proposition \ref{Prp:Relich thm for modified monopole},
				the identity map of $\mathrm{Ker}(\tilde{\dirac}^{\pm}_{(\tilde{A},\tilde{\Phi})})$ is a compact operator.
				Hence we obtain $\mathrm{dim}(\mathrm{Ker}(\tilde{\dirac}^{\pm}_{(\tilde{A},\tilde{\Phi})}))<\infty$.
				Since the Dirac operators $\tilde{\dirac}^{\pm}_{(\tilde{A},\tilde{\Phi})}$ are adjoint to each other,
				the claim $\mathrm{dim}(R(\tilde{\dirac}^{\pm}_{(\tilde{A},\tilde{\Phi})})^{\perp})<\infty$ can be deduced from $\mathrm{dim}(\mathrm{Ker}(\tilde{\dirac}^{\pm}_{(\tilde{A},\tilde{\Phi})}))<\infty$,
				where $R(\cdot)$ means the range of the operator and $\perp$ means the orthogonal complement in $L^2$.
				
				To prove that $R(\tilde{\dirac}^{\pm}_{(\tilde{A},\tilde{\Phi})})$ is closed,
				it suffices to show that there exists a constant $C>0$ such that the condition $||s||_{L^2} < C||\tilde{\dirac}^{\pm}_{(\tilde{A},\tilde{\Phi})}(s)||_{L^2}$ holds for any $s \in \mathrm{Dom}(\tilde{\dirac}^{\pm}_{(\tilde{A},\tilde{\Phi})}) \cap \left(\mathrm{Ker}(\tilde{\dirac}^{\pm}_{(\tilde{A},\tilde{\Phi})})\right)^{\perp}$.
				Suppose that there is no such a constant $C>0$,
				then we can take a sequence $\{s_n\}\subset \mathrm{Dom}(\tilde{\dirac}^{\pm}_{(\tilde{A},\tilde{\Phi})}) \cap \left(\mathrm{Ker}(\tilde{\dirac}^{\pm}_{(\tilde{A},\tilde{\Phi})})\right)^{\perp}$ such that the conditions $||s_n||=1$ and $||\tilde{\dirac}^{\pm}_{(\tilde{A},\tilde{\Phi})}(s_n)||_{L^2}<1/n$ are satisfied.
				By Proposition \ref{Prp:Relich thm for modified monopole},
				we may assume that $\{s_n\}$ converges to some $s_{\infty}\in L^2$.
				Since $||s_n||_{L^2}=1$ for any $n \in\mathbb{N}$,
				we have $s_{\infty} \in \left(\mathrm{Ker}(\tilde{\dirac}^{\pm}_{(\tilde{A},\tilde{\Phi})})\right)^{\perp} \setminus \{0\}$.
				However,
				we also have $\tilde{\dirac}^{\pm}_{(\tilde{A},\tilde{\Phi})}(s_n)\to 0$,
				and hence $\tilde{\dirac}^{\pm}_{(\tilde{A},\tilde{\Phi})}(s_{\infty})=0$,
				which is impossible.
				Therefore the condition holds for some $C>0$ and $R(\tilde{\dirac}^{\pm}_{(\tilde{A},\tilde{\Phi})})$ is closed.
			\end{proof}
			
			\begin{Thm}\label{Thm:Index of modi. and orig. Dirac}
				The Dirac operators $\dirac^{\pm}_{(A,\Phi)}$ are closed Fredholm operators and adjoint to each other,
				and they have the same indices of  $\tilde{\dirac}^{\pm}_{(\tilde{A},\tilde{\Phi})}$.
			\end{Thm}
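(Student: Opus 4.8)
The plan is to exhibit $\dirac^{\pm}_{(A,\Phi)}$ as a relatively compact perturbation of $\tilde{\dirac}^{\pm}_{(\tilde{A},\tilde{\Phi})}$, so that both the Fredholmness and the value of the index transfer from Proposition \ref{Prp:formal adj. eq. ana. adj.} by the stability of the index under such perturbations. First I would fix the identifications. The isometric isomorphism $\mu:(TX,g)\simeq(TX,\tilde{g})$ constructed above induces a unitary isomorphism of the spinor bundles $S_X$ of the two metrics, and since $g$ and $\tilde{g}$ are uniformly equivalent the $L^2$-completions of $\Gamma(X\setminus Z,V\otimes S_X)$ for $g$ and for $\tilde{g}$ coincide as topological vector spaces, with uniformly equivalent norms. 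Hence every operator in play acts on one fixed Hilbert space $L^2(X\setminus Z,V\otimes S_X)$.

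Next I would analyse the difference $B:=\dirac^{\pm}_{(A,\Phi)}-\tilde{\dirac}^{\pm}_{(\tilde{A},\tilde{\Phi})}$. It is supported in $B(Z,\eps)$, because $g=\tilde{g}$, $A=\tilde{A}$ and $\Phi=\tilde{\Phi}$ outside $B(Z,\eps)$. On each ball $B(p,\eps)$ it decomposes as $B=B_0+B_1$, where $B_0$ gathers the zeroth-order contributions: the endomorphism $\pm(\Phi-\tilde{\Phi})$, the Clifford product of the $1$-form $A-\tilde{A}$, and the Clifford product of the difference of the Levi--Civita connection forms of $g$ and $\tilde{g}$; by Corollary \ref{Cor:Appro. by Dirac monopole} and the normal-coordinate expansion of $g$ these coefficients are all of size $O(1)$. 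The remaining part $B_1=\bigl(\mathrm{clif}_{(X,g)}-\mathrm{clif}_{(X,\tilde{g})}\bigr)\comp\nabla$ is a genuine first-order operator, but its coefficient is $O(R_p^2)$ by Lemma \ref{Lem:Pertubation of Clif prod.}.

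Then I would prove that $B$ is a $\tilde{\dirac}^{\pm}_{(\tilde{A},\tilde{\Phi})}$-compact operator. On one hand $B_0$ extends to a bounded operator on $L^2$, so, since the inclusion of $\mathrm{Dom}(\tilde{\dirac}^{\pm}_{(\tilde{A},\tilde{\Phi})})$ into $L^2$ is compact by Proposition \ref{Prp:Relich thm for modified monopole}, its restriction to the domain is compact. On the other hand $B_1$ is supported near $Z$, where $\tilde{\dirac}^{\pm}_{(\tilde{A},\tilde{\Phi})}$ is literally a direct sum of Dirac operators of flat Dirac monopoles; using Proposition \ref{Prp:Dol. lemma for Dirac monopoles} I would write a domain element $s$ on $B(p,\eps/2)$ as $s=\bigl(s-G^{\pm}(\dirac^{\pm}_{(A_k,\Phi_k)}s)\bigr)+G^{\pm}(\dirac^{\pm}_{(A_k,\Phi_k)}s)$, the first summand lying in the kernel and, after being multiplied by $R_p^2$ and differentiated, mapping compactly into $L^2$ because of the explicit decaying expansions of Proposition \ref{Prp:Ker of Dirac monopole} together with Corollary \ref{Cor:Cptness of Kernel}, while the second summand is controlled by the compactness and the $L^3$-regularity of the parametrix $G^{\pm}$. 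In particular $B$ is $\tilde{\dirac}^{\pm}_{(\tilde{A},\tilde{\Phi})}$-bounded, so $\mathrm{Dom}(\dirac^{\pm}_{(A,\Phi)})=\mathrm{Dom}(\tilde{\dirac}^{\pm}_{(\tilde{A},\tilde{\Phi})})$ with equivalent graph norms, and, combining this with the relative compactness, $\dirac^{\pm}_{(A,\Phi)}=\tilde{\dirac}^{\pm}_{(\tilde{A},\tilde{\Phi})}+B$ is a closed Fredholm operator with $\mathrm{Ind}(\dirac^{\pm}_{(A,\Phi)})=\mathrm{Ind}(\tilde{\dirac}^{\pm}_{(\tilde{A},\tilde{\Phi})})$. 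The same domain identification puts every domain element into $L^3(X\setminus Z)$ by Proposition \ref{Prp:formal adj. eq. ana. adj.}, so the cut-off argument in the proof of Proposition \ref{Prp:formal adj. eq. ana. adj.} applies verbatim and shows that $\dirac^{+}_{(A,\Phi)}$ and $\dirac^{-}_{(A,\Phi)}$ are adjoint to each other.

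I expect the first-order error term $B_1$, with its $O(R_p^2)$ coefficient, to be the main obstacle: both the identification of the domains and the relative compactness reduce to controlling $R_p\nabla s$ and $R_p^2\nabla s$ for $s$ in the domain near $Z$, which is exactly what the structure theory of harmonic spinors of the flat Dirac monopole in Section \ref{subsec:Loc. prop. of flat Dirac mon.} is designed to supply. By contrast the zeroth-order discrepancies $B_0$, although of size only $O(1)$ rather than decaying, are harmless, precisely because the inclusion of the domain into $L^2$ is already compact.
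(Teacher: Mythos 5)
Your overall strategy (split the difference into a zeroth-order part $B_0$, killed by the compact inclusion $\mathrm{Dom}(\tilde{\dirac}^{\pm}_{(\tilde{A},\tilde{\Phi})})\hookrightarrow L^2$, and a first-order part $B_1$ with coefficient $O(R_p^2)$) is the same as the paper's, and your $B_0$ step is exactly the paper's first reduction from $\tilde{\dirac}^{\pm}_{(\tilde{A},\tilde{\Phi})}$ to $\tilde{\dirac}^{\pm}_{(A,\Phi)}$. The gap is in your claim that $B_1$ is \emph{relatively compact}. None of the tools you invoke controls $\nabla_{A_{S_X}\otimes A}s$ for a general domain element: $G^{\pm}$ is compact $L^2\to L^2$ and bounded $L^2\to L^3$, but Proposition \ref{Prp:Dol. lemma for Dirac monopoles} says nothing about first derivatives of $G^{\pm}(u)$, and Corollary \ref{Cor:Cptness of Kernel} gives compactness of the restriction of kernel elements in $L^2$, not of $R_p^2\nabla$ applied to them. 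More fundamentally, relative compactness of a first-order perturbation against a first-order operator would require gaining a derivative beyond what the graph norm provides (one would need uniform $H^{1+\sigma}$ control from $\|s\|_{L^2}+\|\dirac s\|_{L^2}$), so even away from $Z$, where the coefficient is merely bounded, the map $s\mapsto c\,\nabla s$ from the graph-norm domain to $L^2$ is bounded but not compact. So the central claim of your third paragraph is unsubstantiated and very likely false.

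What the paper actually proves, and what you are missing, is that $B_1$ is relatively \emph{bounded} with relative bound $O(\eps)$: the Weitzenb\"ock formula $\nabla^{\bigstar}\nabla=\dirac^{-}_{(A,\Phi)}\dirac^{+}_{(A,\Phi)}-\Phi^2+Sc(g)$, applied to cut-offs supported on dyadic annuli $U_p(\delta_i/3,\delta_i)$, yields $\|\nabla s\|_{L^2(U_p(3\delta_i/8,3\delta_i/4))}\leq C(\|\dirac^{\pm}_{(A,\Phi)}s\|+\delta_i^{-1}\|s\|)$, and summing the weights $R_p^2=O(\delta_i^2)$ over the annuli gives $\|B_1 s\|_{L^2}\leq C_2(\|s\|_{L^2}+\|\dirac^{\pm}_{(A,\Phi)}s\|_{L^2})$ with $C_2=O(\eps)$. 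One then invokes stability of the index under small (not compact) perturbations. Note also that this estimate, as stated, only gives $\mathrm{Dom}(\dirac^{\pm}_{(A,\Phi)})\subset\mathrm{Dom}(\tilde{\dirac}^{\pm}_{(A,\Phi)})$; your sentence ``$B$ is relatively bounded, so the domains coincide'' skips the reverse inclusion, which requires the symmetric estimate obtained from the Weitzenb\"ock formula for the metric $\tilde{g}$ (whose curvature correction terms must be checked to be $O(R_p^{-2})$, which is where the Dirac-type condition $|\nabla_A\Phi|=|F(A)|=O(R_p^{-2})$ enters). Without these two a priori estimates your argument establishes neither the domain identification nor the index equality for the metric perturbation.
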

			\begin{proof}
				Since $|A - \tilde{A}|$ and $|\Phi-\tilde{\Phi}|$ are bounded on $X\setminus Z$,
				by Proposition \ref{Prp:Relich thm for modified monopole}
				the operators $\tilde{\dirac}^{\pm}_{(A,\Phi)}$ are closed Fredholm operators and adjoint to each other,
				and they have the same indices of  $\tilde{\dirac}^{\pm}_{(\tilde{A},\tilde{\Phi})}$.
				
				We will prove that $\tilde{\dirac}^{\pm}_{(A,\Phi)}$ and $\dirac^{\pm}_{(A,\Phi)}$ have the same domains and indices.
				First we show $\mathrm{Dom}(\dirac^{\pm}_{(A,\Phi)}) \subset \mathrm{Dom}(\tilde{\dirac}^{\pm}_{(A,\Phi)})$.
				By Lemma \ref{Lem:Pertubation of Clif prod.},
				there exists $C_0>0$ such that the estimate
				\begin{equation}\label{Eqn:Est. of cov. deriv.}
					|\tilde{\dirac}^{\pm}_{(A,\Phi)}(s) - \dirac^{\pm}_{(A,\Phi)}(s)|
					<  C_0\,R_p^2\cdot|\nabla_{A_X\otimes A}(s)|
				\end{equation}
				holds on a neighborhood of $p\in Z$ for any $s\in\Gamma(X\setminus Z, S_X\otimes V)$.
				Let $\kappa:[0,\infty]:\to [0,1]$ be a smooth bump function satisfying
				\[
					\kappa(x)=
						\left\{
							\begin{array}{ll}
								0 & (1 \leq x)\\
								1 & (3/8 \leq x \leq 3/4)\\
								0 & (x \leq 1/3).
							\end{array}
						\right.
				\]
				For $\delta>0$,
				we set a function $\varphi_{\delta}:X \to [0,1]$ to be $\varphi_{\delta}(x) = \kappa(\delta^{-1}d_g(x,Z))$.
				By the Weitzenb\"{o}ck formula $\nabla^{\bigstar_g}_{A_X\otimes A}\nabla_{A_X\otimes A} = \dirac^{-}_{(A,\Phi)}\dirac^{+}_{(A,\Phi)} - \Phi^2 + Sc(g)$,
				we have
				$||\nabla_{A_{S_X}\otimes A}(\varphi_{\delta}s)||_{L^2}^2 = ||\dirac^{+}_{(A,\Phi)}(\varphi_{\delta}s)||_{L^2}^2 + ||\Phi(\varphi_{\delta}s)||_{L^2}^2 + \int_X Sc(g)|\varphi_{\delta}s|^2 d\mathrm{vol}_M$ for any $s\in\Gamma(X\setminus Z, V\otimes S_X)$,
				where $\nabla^{\bigstar_g}_{A_X\otimes A}$ is the formal adjoint of $\nabla_{A_X\otimes A}$ with respect to $g$ and $Sc(g)$ is the scalar curvature of $g$.
				Therefore,
				there exists $C_1>0$ such that for any sufficiently small $\delta>0$ we have $||\nabla_{A_{S_X}\otimes\,A}(s)||_{L^2(U_p(3\delta/8,3\delta/4))} \leq C_1( ||\dirac^{\pm}_{(A,\Phi)}(s)||_{L^2(U_p(\delta/3,\delta))} + \delta^{-1}||s||_{L^2(U_p(\delta/3,\delta))})$ holds for any $s\in\Gamma(X\setminus Z, V\otimes S_X)$,
				where we put $U_p(\delta_1,\delta_2):= \{x\in X \mid \delta_1 < d_g(p,x) < \delta_2\}$.
				We set $\delta_i:= 4\eps/(3\cdot2^i)$ for $i\in\mathbb{Z}_{\geq0}$.
				Then we have
				\begin{align*}
					&||\tilde{\dirac}^{\pm}_{(A,\Phi)}(s) - \dirac^{\pm}_{(A,\Phi)}(s)||_{L^2(B(p,\eps))}\\
					\leq\,& C_0 ||R_p^2\cdot\nabla_{A_X\otimes A}(s)||_{L^2(B(p,\eps))}\\
					\leq\,& C_0\sum_{i=0}^{\infty}||R_p^2\cdot\nabla_{A_X\otimes A}(s)||_{L^2(U_p(3\delta_i/8, 3\delta_i/4))}\\
					\leq\,& C_0C_1\sum_{i=0}^{\infty}\left\{\delta_i^2 ||\dirac^{\pm}_{(A,\Phi)}(s)||_{L^2(U_p(\delta_i/3,\delta_i))} + \delta_i||s||_{L^2(U_p(\delta_i/3,\delta_i))}\right\}\\
					\leq\,& C_0C_1\Bigl(\delta_0^2||\dirac^{\pm}_{(A,\Phi)}(s)||_{L^2(B(p,\delta_0))} + \delta_0||s||_{L^2(B(p,\delta_0))}\Bigr)
				\end{align*}
				Hence there exists $C_2=C_2(\eps) >0$ such that $||\tilde{\dirac}^{\pm}_{(A,\Phi)}(s) - \dirac^{\pm}_{(A,\Phi)}(s)||_{L^2} \leq C_2(||s||_{L^2} + ||\dirac^{\pm}_{(A,\Phi)}(s)||_{L^2})$,
				and we have $C_2 = O(\eps)\;(\eps\to0)$.
				Hence we obtain $\mathrm{Dom}(\dirac^{\pm}_{(A,\Phi)}) \subset \mathrm{Dom}(\tilde{\dirac}^{\pm}_{(A,\Phi)})$.
				We show the converse.
				Let $\tilde{A}_X$ denote the connection on $S_X$ induced by the Levi-Civita connection of $(X,\tilde{g})$.
				By the definition of Dirac-type singularity,
				we have $|\nabla_A(\Phi)| = |F(A)| = O(R_p^{-2})$ around $p\in Z$.
				Therefore from the Weitzenb\"{o}ck formula $\nabla^{\bigstar_{\tilde{g}}}_{\tilde{A}_X\otimes A}\nabla_{\tilde{A}_X\otimes A} = \dirac^{-}_{(A,\Phi)}\dirac^{+}_{(A,\Phi)} - \Phi^2 + \mathrm{clif}(\nabla_{A}(\Phi) - \ast_{\tilde{g}}F(A))$ and a similar argument as above,
				it follows that there exists $C_3 = C_3(\eps)>0$ such that $||\tilde{\dirac}^{\pm}_{(A,\Phi)}(s) - \dirac^{\pm}_{(A,\Phi)}(s)||_{L^2} \leq C_3(||s||_{L^2} + ||\tilde{\dirac}^{\pm}_{(A,\Phi)}(s)||_{L^2})$,
				and $C_3 = O(\eps)\;(\eps\to0)$.
				Therefore $\mathrm{Dom}(\dirac^{\pm}_{(A,\Phi)}) = \mathrm{Dom}(\tilde{\dirac}^{\pm}_{(A,\Phi)})$.
				Moreover,
				Their graph norms are also equivalent.
				It is a well-known fact that sufficiently small deformations of Fredholm operators remain Fredholm.
				Hence the operators $\dirac^{\pm}_{(A,\Phi)}$ are closed Fredholm operators,
				and they have the same indices as ones of $\tilde{\dirac}^{\pm}_{(\tilde{A},\tilde{\Phi})}$.
			\end{proof}
		\subsection{An index calculation on a compact $3$-folds}
			\subsubsection{A lift of singular monopoles to closed $4$-folds}
				For an arbitrary $3$-fold $N$ and a principal $S^1$-bundle $P$ defined on outside of a point $x\in N$,
				we set $\mathrm{deg}_{x}(P) := \int_{\partial B}c_1(P)$,
				where $B$ is a small neighborhood of $x$. 
				 
				We take a finite subset $Z'\subset X$ satisfying the conditions $|Z'|=|Z|$ and $Z \cap Z' = \emptyset$,
				and set $\tilde{Z}=Z\cup Z'$.
				By the Mayer-Vietoris exact sequence induced by the open covering $X = B_{\eps}(\tilde{Z})\cup (X \setminus \tilde{Z})$,
				we can prove that there exists a principal $S^1$-bundle $\pi:P\to X \setminus \tilde{Z}$ such that we have $\mathrm{deg}_{p}(P)=-1$ for $p\in Z$ and $\mathrm{deg}_{p'}(P)=1$ for $p'\in Z'$.
				We take a metric $\hat{g}$ on $X$ that is flat on $B(\tilde{Z},\eps/2)$.
				Let $f:X\setminus\tilde{Z}\to \mathbb{R}_+$ be a smooth function.
				Let $\omega\in\Omega^1(P,\mathbb{R})$ be a connection of $P$.
				We assume that for any $p\in Z$\,(resp. $Z'$) the tuple $((P,\omega)\times_{S^1}(\mathbb{C},h_{\mathbb{C}}),-\sqrt{-1}f)|_{B(p,\eps/2)}$\;(resp. $((P,\omega)\times_{S^1}(\mathbb{C},h_{\mathbb{C}}),\sqrt{-1}f)|_{B(p,\eps/2)}$) is the flat Dirac monopole of weight $-1$\,(resp. $1$) with respect to $\hat{g}$.
				Set a one-form $\xi:= \omega/\pi^{\ast}f$ and a metric $g_P := \pi^{\ast}\hat{g} + \xi^2$ on P.
				We choose the global $4$-form $-\xi\wedge \pi^{\ast}\mathrm{vol}_{(X,\hat{g})}$ as the orientation of $P$.
				\begin{Prp}\label{Prp:lift of ori. and spin.}
					The following claims are satisfied.
					\begin{itemize}
						\item 
							The $4$-fold $P$ has the spin structure induced by the one of $X$.
						\item
							Let $v$ be a vector field on $X$.
							By the isomorphism $TP=\mathbb{R}\partial_{\theta}\oplus\pi^{\ast}TX$ induced by $\omega$,
							we regard $\pi^{\ast}v$ as a vector field on $P$.
							Then for $F\in C^{\infty}(X)$ we have $\pi^{\ast}(v\cdot F)= \pi^{\ast}v\cdot \pi^{\ast}F$.
						\item
							For the spinor bundles $S^{\pm}$,
							we have the unitary isomorphism $S^{\pm}_P \simeq \pi^{\ast}(S_X)$.
						\item
							Under the above isomorphisms,
							the Clifford product on $P$ can be represented as follows:
							\begin{alignat*}{3}
								&\mathrm{clif}_{P}(\xi)&=&
									\left(
										\begin{array}{cc}
											0 & \mathrm{Id}\\
											-\mathrm{Id} & 0
										\end{array}
									\right)&&\\
								&\mathrm{clif}_{P}(\pi^{\ast}\alpha)\ &=&
									\left(
									\begin{array}{cc}
									0 & \mathrm{clif}_{X}(\alpha)\\
									\mathrm{clif}_{X}(\alpha) & 0
									\end{array}
									\right)&&\ \ \ \ \ (\alpha\in\Gamma(X,\Omega^1(X))).
							\end{alignat*}
					\end{itemize}
				\end{Prp}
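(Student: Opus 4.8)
The plan is to imitate, in this global setting, the construction that precedes Proposition~\ref{Prp:local lift of clif.} (which handled the model case $X=\mathbb{R}^3$), with the Hopf fibration replaced by $\pi\colon P\to X\setminus\tilde{Z}$ and the frame of $\mathbb{R}^3$ replaced by the orthonormal frame bundle of $(TX,\hat{g})$. The first step is to record the orthogonal structure of $g_P$. Since $\omega(\partial_\theta)=1$ and $\xi=\omega/\pi^\ast f$, the horizontal distribution $\mathcal{H}:=\ker\omega$ is $g_P$-orthogonal to the vertical line $\mathbb{R}\partial_\theta$, and $d\pi$ restricts to a fibrewise isometry $(\mathcal{H},g_P)\xrightarrow{\sim}(\pi^\ast TX,\pi^\ast\hat{g})$; this is the splitting $TP=\mathbb{R}\partial_\theta\oplus\pi^\ast TX$ in the statement. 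I would then set $e_0:=\pi^\ast f\cdot\partial_\theta$ and check by a one-line computation that $g_P(e_0,e_0)=1$ and $e_0^\flat=\xi$, so $\xi$ is the unit conormal of $\mathcal{H}$; hence $\mathrm{vol}_{(P,g_P)}=\pm\,\xi\wedge\pi^\ast\mathrm{vol}_{(X,\hat{g})}$, and the chosen orientation $-\xi\wedge\pi^\ast\mathrm{vol}_{(X,\hat{g})}$ is the one matching the adapted frame with the correct sign. Finally, for $\alpha\in\Omega^1(X)$ the form $\pi^\ast\alpha$ annihilates $\partial_\theta$, hence is $g_P$-orthogonal to $\xi$ and corresponds to $\alpha$ under $d\pi$, so $(\xi,\pi^\ast\alpha_1,\pi^\ast\alpha_2,\pi^\ast\alpha_3)$ is a local orthonormal coframe of $(P,g_P)$ whenever $(\alpha_i)$ is one of $(X,\hat{g})$.

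Next I would build the spin structure. The globally defined unit vector field $e_0$ reduces the oriented orthonormal frame bundle of $(TP,g_P)$ to the $SO(3)$-bundle of oriented orthonormal frames of $\mathcal{H}$, which via $\mathcal{H}\simeq\pi^\ast TX$ is exactly $\pi^\ast P_{SO(3)}$, where $P_{SO(3)}$ is the frame bundle of $(TX,\hat{g})$; concretely the $SO(4)$-frame bundle of $P$ is $\pi^\ast P_{SO(3)}\times_\rho SO(4)$ with $\rho\colon SO(3)\hookrightarrow SO(4)$ the stabiliser of $e_0$. Lifting $\rho$ to $\tilde{\rho}\colon Spin(3)\to Spin(4)$ and putting $\mathscr{S}_4:=\pi^\ast\mathscr{S}\times_{\tilde{\rho}}Spin(4)$, one obtains a double cover of the $SO(4)$-frame bundle which restricts on the $SO(3)$-part to $\pi^\ast\mathscr{S}$; thus $\mathscr{S}_4$ is a spin structure on $(P,g_P)$ induced by that of $X$, which is the first claim. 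For the spinor bundles I would use that under $Spin(3)\simeq SU(2)$ and $Spin(4)\simeq SU(2)_+\times SU(2)_-$ one has $\tilde{\rho}(g)=(g,g)$, so both half-spin representations of $Spin(4)$ restrict along $\tilde{\rho}$ to the spin representation of $Spin(3)$; associating bundles gives the unitary isomorphisms $S^\pm_P\simeq\pi^\ast S_X$, which is the third claim.

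The second claim is a chain rule: the horizontal lift $\pi^\ast v\in\Gamma(\mathcal{H})$ of a vector field $v$ on $X$ is $\pi$-related to $v$, so for $F\in C^\infty(X)$ we have $(\pi^\ast v)(\pi^\ast F)=d(\pi^\ast F)(\pi^\ast v)=\pi^\ast\bigl(dF(v)\bigr)=\pi^\ast(v\cdot F)$. For the Clifford-product formulas I would evaluate on the coframe $(\xi=e_0^\flat,\pi^\ast\alpha_1,\pi^\ast\alpha_2,\pi^\ast\alpha_3)$: Clifford multiplication by a $1$-form on $P$ interchanges $S^+_P$ and $S^-_P$, and since $\tilde{\rho}(g)=(g,g)$ the two off-diagonal blocks of $\mathrm{clif}_P(\pi^\ast\alpha)$ are both $\mathrm{clif}_X(\alpha)$, while $\mathrm{clif}_P(\xi)$ is the block matrix $\begin{pmatrix}0&\mathrm{Id}\\-\mathrm{Id}&0\end{pmatrix}$ normalised by the orientation. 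This is literally the computation in the proof of Proposition~\ref{Prp:local lift of clif.}, the only difference being that here $g_P=\pi^\ast\hat{g}+\xi^2$ carries no conformal factor $\pi^\ast f$, so the scalar $(\pi^\ast f)^{-1/2}$ appearing there drops out and one gets exactly the displayed formulas.

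The step I expect to be the main obstacle is the orientation and chirality bookkeeping: one must verify that the chosen orientation $-\xi\wedge\pi^\ast\mathrm{vol}_{(X,\hat{g})}$, the dimension-$3$ normalisation $\mathrm{clif}(\mathrm{vol}_{(X,\hat{g})})=\mathrm{Id}_{S_X}$, the decomposition $S_P=S^+_P\oplus S^-_P$, and the labelling of the $SU(2)_\pm$ factors are mutually consistent, so that $\mathrm{clif}_P(\xi)$ comes out with the signs $\begin{pmatrix}0&\mathrm{Id}\\-\mathrm{Id}&0\end{pmatrix}$ rather than their negatives and $S^+_P$ (not $S^-_P$) is the bundle that appears in the later identifications. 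Everything else is routine naturality of the frame-bundle and associated-bundle constructions.
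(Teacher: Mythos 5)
Your proposal is correct and follows essentially the same route as the paper: you construct the spin structure as $\pi^{\ast}\mathscr{S}\times_{\rho}Spin(4)$ via the lift of the stabiliser inclusion $SO(3)\hookrightarrow SO(4)$, identify $S^{\pm}_P\simeq\pi^{\ast}S_X$ from $\rho(g)=(g,g)$ under $Spin(3)\simeq SU(2)$ and $Spin(4)\simeq SU(2)_{+}\times SU(2)_{-}$, and read off the Clifford formulas on the adapted orthonormal coframe $(\xi,\pi^{\ast}\alpha_i)$. The paper's proof is terser (it leaves the second and fourth claims as "direct calculations"), and your extra care with $e_0=\pi^{\ast}f\cdot\partial_\theta$, $e_0^{\flat}=\xi$, and the orientation convention is a faithful elaboration of the same argument rather than a different one.
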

				\begin{proof}
					Let $i:SO(3)\to SO(4)$ be the injection induced by $\mathbb{R}^3 \ni x \to (0,x)\in\mathbb{R}^4$,
					and take the homomorphism $\rho:Spin(3)\to Spin(4)$ to be the lift of $i$.
					Set $\mathscr{S}_{P}:= \pi^{\ast}\mathscr{S} \times_{\rho} Spin(4)$.
					Then we have $\mathscr{S}_{P}\times_{Spin(4)} (\mathbb{R}^4,g_{4,\mathrm{Euc}}) \simeq \left(P\times(\mathbb{R},g_{1,\mathrm{Euc}})\right) \oplus (\pi^{\ast}TX,\pi^{\ast}g) \simeq TP$.
					Hence $\mathscr{S}_P$ is a spin structure on $P$.
					The second claim is trivial from some direct calculations.
					
					We have the isomorphisms $Spin(3)\simeq SU(2)$ and $Spin(4)\simeq SU(2)_{+}\times SU(2)_{-}$.
					Under this isomorphism,
					we have $\rho(g)=(g,g)$.
					Hence we obtain the unitary isomorphism $S^{\pm}_P \simeq \pi^{\ast}(S_X)$.
					The last claim easily follows from the third one. 
				\end{proof}
				We take another metric $\tilde{g}_P:= \pi^{\ast}f\cdot g_P$.
				For $p\in Z$,
				the restriction $\pi:\pi^{\ast}(B(p,\eps/2)\setminus\{p\}) \to B(p,\eps/2)\setminus\{p\}$ can be identified with the Hopf fibration $(\mathbb{R}^4\setminus\{0\})\to (\mathbb{R}^3\setminus\{0\})$.
				For $p'\in Z'$,
				we can also identify $\pi:\pi^{\ast}(B(p',\eps/2)\setminus\{p'\}) \to B(p',\eps/2)\setminus\{p'\}$ with the inverse-oriented Hopf fibration $(-\mathbb{R}^4\setminus\{0\})\to (\mathbb{R}^3\setminus\{0\})$,
				where $-\mathbb{R}^4$ is the differentiable manifold $\mathbb{R}^4$ with the inverse orientation of the standard one of $\mathbb{R}^4$.
				Hence by taking the one-point compactification on the closure of each $\pi^{\ast}(B(p,\eps/2)\setminus \{p\})$,
				we obtain a closed $4$-fold $\tilde{P}$ equipped with an $S^1$-action.
				Then $\tilde{g}_P$ can be prolonged to a metric on $\tilde{P}$ as in Lemma \ref{Lem:prepare of Pauly condition}.
				We extend the projection $\pi:P\to X\setminus \tilde{Z}$ to the smooth map $\tilde{P} \to X$,
				and we denote this map by the same letter $\pi$ by abuse of notation.
				Set $Z_4:=\pi^{-1}(Z)$, $Z'_4:=\pi^{-1}(Z')$ and $\tilde{Z}_4:=\pi^{-1}(\tilde{Z})$.
				Then $\pi|_{\tilde{Z}_4}:\tilde{Z}_4 \to Z_4$ is a bijection.
				We have $\tilde{P}=P \sqcup \tilde{Z}_4$ and $\mathrm{codim}(\tilde{P},\tilde{Z})=4$.
				Hence we obtain isomorphisms $\pi_1(P)\simeq \pi_1(\tilde{P})$ and $H^2(P,\mathbb{Z}/2\mathbb{Z})\simeq H^2(\tilde{P},\mathbb{Z}/2\mathbb{Z})$.
				Therefore the orientation and the spin structure of $P$ induce the unique ones of $\tilde{P}$.
				Hence we obtain the following lemma. 
				\begin{Lem}\label{Lem:lift of clif. Prod.}
					We have the unitary isomorphisms $S^{\pm}_{\tilde{P}}|_P \simeq (\pi^{\ast}S_X)|_P$.
					Under these isomorphisms,
					we have $\mathrm{clif}_{\tilde{P}}(v)|_{P} = \pi^{\ast}f^{-1/2}\cdot\mathrm{clif}_P(v)$ for $v\in \Omega^{1}(\tilde{P})$.
				\end{Lem}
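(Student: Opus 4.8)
The plan is to obtain Lemma \ref{Lem:lift of clif. Prod.} from Proposition \ref{Prp:lift of ori. and spin.} together with the conformal invariance of spinor bundles, using that $\tilde{g}_P = (\pi^{\ast}f)\,g_P$ is nothing but a conformal rescaling of $g_P$ by the smooth positive function $\pi^{\ast}f$ on $P$.

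First I would recall the standard conformal-covariance statement. For a conformal rescaling $\tilde{g} = \lambda^2 g$ of a Riemannian metric on an oriented spin manifold, with $\lambda>0$ smooth, the $Spin$-structure is unchanged and there is a canonical bundle isometry $\Psi:S_{g}\to S_{\tilde{g}}$ with two properties. It intertwines Clifford multiplications, $\mathrm{clif}_{\tilde{g}}(v)\circ\Psi = \lambda^{-1}\,\Psi\circ\mathrm{clif}_{g}(v)$ for every $1$-form $v$; this is forced by the Clifford relation, since $\mathrm{clif}_{\tilde{g}}(v)^2 = -|v|^2_{\tilde{g}}\,\mathrm{Id} = -\lambda^{-2}|v|^2_{g}\,\mathrm{Id}$. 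And it preserves the $\mathbb{Z}/2$-grading $S^{\pm}$: if $\{e^i\}$ is a $g$-orthonormal local coframe then $\{\lambda e^i\}$ is a $\tilde{g}$-orthonormal coframe, and the chirality operator $\mathrm{clif}_{\tilde{g}}(\lambda e^1)\cdots\mathrm{clif}_{\tilde{g}}(\lambda e^n) = \mathrm{clif}_{g}(e^1)\cdots\mathrm{clif}_{g}(e^n)$ is thus unchanged, so $S^{\pm}_{\tilde{g}} = \Psi(S^{\pm}_{g})$.

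Applying this with $g = g_P$, $\tilde{g} = \tilde{g}_P$ and $\lambda = (\pi^{\ast}f)^{1/2}$, I obtain a unitary, grading-preserving isomorphism $\Psi:S^{\pm}_{g_P}\to S^{\pm}_{\tilde{g}_P}$ satisfying $\mathrm{clif}_{\tilde{g}_P}(v)\circ\Psi = (\pi^{\ast}f)^{-1/2}\,\Psi\circ\mathrm{clif}_{P}(v)$. Since $\tilde{P}$ is obtained from $P$ by adjoining the set $\tilde{Z}_4$ of codimension $4$, and the metric $\tilde{g}_P$, the orientation and the spin structure of $P$ extend uniquely over $\tilde{P}$ (as recorded just above the statement), the spinor bundle of $(\tilde{P},\tilde{g}_P)$ restricts over $P$ to that of $(P,\tilde{g}_P)$, i.e. $S^{\pm}_{\tilde{P}}|_P = S^{\pm}_{\tilde{g}_P}$. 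Composing $\Psi^{-1}$ with the unitary isomorphism $S^{\pm}_{g_P}\simeq \pi^{\ast}(S_X)$ of Proposition \ref{Prp:lift of ori. and spin.} then yields the asserted unitary isomorphism $S^{\pm}_{\tilde{P}}|_P \simeq (\pi^{\ast}S_X)|_P$, under which $\mathrm{clif}_{\tilde{P}}(v)|_P = (\pi^{\ast}f)^{-1/2}\,\mathrm{clif}_P(v)$ for $v\in\Omega^1(\tilde{P})$ by the intertwining relation above.

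The argument is essentially formal; the only points that require attention are that the conformal identification $\Psi$ is genuinely an \emph{isometry} of the Hermitian spinor bundles (so that the composite is unitary as claimed) and that it respects the splitting into self-dual and anti-self-dual spinors---both of which follow from the coframe computation above---together with the bookkeeping that the spin structure on $\tilde{P}$ defining $S^{\pm}_{\tilde{P}}$ is exactly the extension of the one on $P$ entering Proposition \ref{Prp:lift of ori. and spin.}. I do not expect any genuine obstacle here.
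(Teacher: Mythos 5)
Your argument is correct and is exactly the route the paper takes (the paper gives no separate proof, deducing the lemma from Proposition \ref{Prp:lift of ori. and spin.} together with the conformal rescaling $\tilde{g}_P=\pi^{\ast}f\cdot g_P$ and the unique extension of orientation and spin structure over the codimension-$4$ set $\tilde{Z}_4$). Your explicit verification of the conformal covariance of the Clifford product and of the preservation of the chirality grading fills in precisely what the paper leaves implicit.
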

				
				For the Dirac-type singular monopole $(V,h,A,\Phi)$ on $(X,Z)$,
				we take a connection $\hat{A}$ and a skew-Hermitian endomorphism $\hat{\Phi}$ that satisfy the following conditions.
				\begin{itemize}
					\item 
						For any $p\in Z$,
						$(V,h,\hat{A},\hat{\Phi})|_{B(p,\eps/2)\setminus\{p\}}$ is a direct sum of the flat Dirac monopoles with respect to the metric $\hat{g}$.
					\item 
						For any $p'\in Z'$,
						$(V,h,\hat{A})|_{B(p',\eps/2)}$ is a flat unitary bundle and $\hat{\Phi}|_{B(p',\eps/2)}=0$.
					\item 
						The differences $|A-\hat{A}|$, $|\Phi - \hat{\Phi}|$ are bounded on $X\setminus \tilde{Z}$.
				\end{itemize}
				We denote by $\hat{\dirac}^{\pm}_{(\hat{A},\hat{\Phi})}$ the Dirac operators of $(V,h,\hat{A},\hat{\Phi})$ with respect to the metric $\hat{g}$.
				By the same argument as Proposition \ref{Prp:formal adj. eq. ana. adj.} and Theorem \ref{Thm:Index of modi. and orig. Dirac},
				$\hat{\dirac}^{\pm}_{(\hat{A},\hat{\Phi})}$ are Fredholm and adjoint to each other,
				and the indices of $\hat{\dirac}^{\pm}_{(\hat{A},\hat{\Phi})}$ are the same as the ones of $\dirac^{\pm}_{(A,\Phi)}$.
				
				We set $(V_4,h_4,A_4):= (\pi^{\ast}V,\pi^{\ast}h,\pi^{\ast}\hat{A} - \xi\otimes \pi^{\ast}\hat{\Phi})$ on $P\sqcup Z'_4$.
				By Proposition \ref{Prp:Pauly condition},
				$(V_4,h_4,A_4)$ can be prolonged over $\tilde{P}$,
				and we denote it by the same symbols.
				Let $\dirac^{\pm}_{A_4}:\Gamma(\tilde{P},S^{\pm}_{\tilde{P}}\otimes V_4)\to \Gamma(\tilde{P},S^{\mp}_{\tilde{P}}\otimes V_4)$ be the Dirac operators of $(V_4,h_4,A_4)$.
				For a section $s\in \Gamma(X\setminus Z,V\otimes S_X)$,
				we have $||\pi^{\ast}s||^2_{L^2(\tilde{P},\tilde{g}_P)}= 2\pi||\sqrt{f}s||^2_{L^2(X,\hat{g})}$.
				Hence the operator $\pi^{\dagger}(s):= \pi^{\ast}(\sqrt{2\pi f^{-1}}\,s)$ preserves the $L^2$-norms.
				Since $P$ is a principal $S^1$-bundle on $X$,
				$\pi^{\dagger}$ is an isometric isomorphism from $L^2(X\setminus \tilde{Z},V\otimes S_X)$ to $L^2(\tilde{P},V_4 \otimes S^{\pm}_{\tilde{P}})^{S^1}$,
				where $L^2(\tilde{P},V_4 \otimes S_{\tilde{P}})^{S^1}$ is the closed subspace of $L^2(\tilde{P},V_4 \otimes S_{\tilde{P}})$ consisting of $S^1$-invariant sections.
				For $i=1,2$,
				take smooth functions $\lambda^{\pm}_{i}:X\setminus \tilde{Z} \to \mathbb{R}_+$ satisfying the following conditions.
				\begin{itemize}
					\item 
						The equality $\lambda^{\pm}_{1}\lambda^{\pm}_{2} = f^{-1/2}$ holds.
					\item
						The equality  $\lambda^{\pm}_{1} = \lambda^{\mp}_{2}$ holds.
					\item
						For any $p\in Z$,
						$\lambda^{+}_1|_{B(p,\eps)\setminus\{p\}} = 1$.
					\item
						For any $p'\in Z'$,
						$\lambda^{+}_2|_{B(p',\eps)\setminus\{p'\}} = 1$.
				\end{itemize}
				By Lemma \ref{Lem:lift of clif. Prod.} and Proposition \ref{Prp:Local lift of Dirac op.},
				there exist compact-supported smooth endomorphisms $\epsilon^{\pm}\in\Gamma(X\setminus \tilde{Z}, \mathrm{End}(S_X\otimes V))$ such that we have $\pi^{\dagger}(\epsilon^{\pm})(s) = \dirac^{\pm}_{A_4} \comp \pi^{\dagger}(s) - \pi^{\dagger}\comp (\lambda^{\pm}_1\hat{\dirac}^{\pm}_{(\hat{A},\hat{\Phi})}\lambda^{\pm}_2)(s)$ for any $s\in\Gamma(X\setminus Z, S_X\otimes V)$.
				Let $D^{\pm}$ be the differential operator $\lambda^{\pm}_1\hat{\dirac}^{\pm}_{(\hat{A},\hat{\Phi})}\lambda^{\pm}_2 + \epsilon^{\pm}$ on $X\setminus\tilde{Z}$.
				We denote by $\mathrm{Ind}(\dirac^{\pm}_{A_4})^{S^1}$ the $S^1$-equivariant index of the closed operator $\dirac^{\pm}_{A_4}:L^2(\tilde{P},V_4 \otimes S^{\pm}_{\tilde{P}})^{S^1} \to L^2(\tilde{P},V_4 \otimes S^{\mp}_{\tilde{P}})^{S^1}$.
				\begin{Prp}\label{Prp:L^2-lift of Dirac op.}
					Under the isometric isomorphism $\pi^{\dagger}$,
					the operators $D^{\pm}$ and $\dirac^{\pm}_{A_4}$ determine the same closed operators respectively.
					In particular,
					the operators $D^{\pm}$ are closed Fredholm operator adjoint to each other,
					and satisfy $\mathrm{Ind}(D^{\pm}) =\mathrm{Ind}(\dirac^{\pm}_{A_4})^{S^1}$.
				\end{Prp}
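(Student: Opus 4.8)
The plan is to observe that the intertwining relation already holds on smooth sections and then to promote it to an identity of \emph{closed} operators, after which every analytic property of $D^\pm$ follows from that of the elliptic operator $\dirac^\pm_{A_4}$ on the \emph{closed} $4$-fold $\tilde P$. Rewriting the relation defining $\epsilon^\pm$ (obtained from Lemma \ref{Lem:lift of clif. Prod.} and Proposition \ref{Prp:Local lift of Dirac op.}) gives $\dirac^\pm_{A_4}\comp\pi^\dagger(s) = \pi^\dagger\comp D^\pm(s)$ for every $s\in\Gamma(X\setminus\tilde Z, V\otimes S_X)$. Both sides are first-order differential operators whose coefficients are smooth on $X\setminus\tilde Z$ (the $\lambda^\pm_i$ are smooth and positive there and $\epsilon^\pm$ is smooth with compact support) and of class $C^3$ on $P=\tilde P\setminus\tilde Z_4$ (the connection matrix of $A_4$ is $C^3$), so I would first note that, locally on $P$, where all coefficients are smooth of class at least $C^3$, this identity persists for distributional sections by continuity, giving $\dirac^\pm_{A_4}(\pi^\dagger s) = \pi^\dagger(D^\pm s)$ for all $s\in L^2_{\mathrm{loc}}(X\setminus\tilde Z, V\otimes S_X)$, with derivatives taken as currents on $P$.

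The crux is identifying the domains. Since $\dirac^\pm_{A_4}$ is a first-order elliptic operator with $C^3$ coefficients on the closed manifold $\tilde P$, its minimal and maximal closed extensions coincide, with common domain $H^1(\tilde P, V_4\otimes S^\pm_{\tilde P})$, so the closed operator $\dirac^\pm_{A_4}$ is unambiguous. Now, given $s\in\mathrm{Dom}(D^\pm)$, i.e. $s, D^\pm s\in L^2(X\setminus\tilde Z, V\otimes S_X)$, both $\pi^\dagger s$ and $\dirac^\pm_{A_4}(\pi^\dagger s)=\pi^\dagger(D^\pm s)$ lie in $L^2(P, V_4\otimes S^\pm_{\tilde P})\subset L^2(\tilde P, V_4\otimes S^\pm_{\tilde P})$ (as $\tilde Z_4$ has measure zero), and $\pi^\dagger s$ is $S^1$-invariant. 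Because $\mathrm{codim}(\tilde Z_4,\tilde P)=4\ge 2$, the set $\tilde Z_4$ has vanishing $W^{1,2}$-capacity: I would pick cut-offs $\chi_n$ vanishing near $\tilde Z_4$, equal to $1$ off a shrinking neighborhood, with $\|d\chi_n\|_{L^2}\to 0$, pair the current identity on $P$ against $\chi_n\phi$ for a test section $\phi$ on $\tilde P$, and let $n\to\infty$; the term involving $\mathrm{clif}_{\tilde P}(d\chi_n)$ drops out, showing that $\dirac^\pm_{A_4}(\pi^\dagger s)$ has no distributional contribution along $\tilde Z_4$ and hence, by the elliptic inequality on $\tilde P$, $\pi^\dagger s\in H^1(\tilde P)=\mathrm{Dom}(\dirac^\pm_{A_4})$. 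Thus $\pi^\dagger$ carries $\mathrm{Dom}(D^\pm)$ into $\mathrm{Dom}(\dirac^\pm_{A_4})\cap L^2(\tilde P, V_4\otimes S^\pm_{\tilde P})^{S^1}$; the reverse inclusion is immediate, since restricting $t$ in the latter set to $P$ gives $t|_P, \dirac^\pm_{A_4}(t)|_P\in L^2(P)$, whence $(\pi^\dagger)^{-1}t\in\mathrm{Dom}(D^\pm)$. Therefore $\pi^\dagger$ is an isometric isomorphism from the closed operator $D^\pm$ onto the restriction of $\dirac^\pm_{A_4}$ to the subspace of $S^1$-invariant sections.

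It then remains to transport the properties across $\pi^\dagger$. On the closed spin $4$-fold $\tilde P$ the operators $\dirac^\pm_{A_4}$ are elliptic, hence Fredholm, and $\dirac^+_{A_4}$ and $\dirac^-_{A_4}$ are $L^2$-adjoints of one another. Because $\tilde g_P$ and $A_4$ are $S^1$-invariant, $\dirac^\pm_{A_4}$ commute with the $S^1$-action and preserve the isotypic decomposition of $L^2$; the restriction of a Fredholm $S^1$-equivariant operator to any isotypic component is again Fredholm (its kernel lies in the finite-dimensional full kernel, and its range is the intersection of the closed full range with the invariant subspace, with cokernel embedding into the finite-dimensional full cokernel), its adjoint is the corresponding restriction of $\dirac^\mp_{A_4}$, and the index of the restriction to the invariant part is $\mathrm{Ind}(\dirac^\pm_{A_4})^{S^1}$ by definition. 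Pulling these back through the isometric isomorphism of the previous paragraph shows that $D^\pm$ are closed Fredholm operators, adjoint to each other, with $\mathrm{Ind}(D^\pm)=\mathrm{Ind}(\dirac^\pm_{A_4})^{S^1}$.

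I expect the main obstacle to be exactly the domain identification across $\tilde Z_4$: one must guarantee that the maximal (current) extension of $D^\pm$ on the punctured base $X\setminus\tilde Z$ matches, with neither loss nor gain of domain, the unique closed extension of the smooth elliptic operator $\dirac^\pm_{A_4}$ on the compact total space $\tilde P$. This is precisely where the codimension-$4$ condition enters, through the capacity/cut-off argument; granting that, the rest is routine functional analysis together with the standard elliptic theory on the closed manifold $\tilde P$.
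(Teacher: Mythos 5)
Your proposal is correct and follows essentially the same route as the paper: establish the intertwining relation through $\pi^{\dagger}$, identify the maximal domains by pairing against test sections, and handle the crucial passage across the codimension-$4$ set $\tilde{Z}_4$ by a cut-off argument (your vanishing $W^{1,2}$-capacity step is exactly the content of the paper's Lemma \ref{Lem:prolong of sol. of Dirac eq.}, which uses $\psi_n=\kappa(n\log d_{\tilde{g}_P}(\cdot,\tilde{Z}_4))$ and dominated convergence). Your final paragraph on restricting an $S^1$-equivariant Fredholm operator to the invariant isotypic component makes explicit what the paper leaves implicit, but the substance is the same.
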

				\begin{proof}
					We take an arbitrary $a\in \mathrm{Dom}(\dirac^{\pm}_{A_4})^{S^1}$, and set $b := \dirac^{\pm}_{A_4}(a)$.
					We will show $(\pi^{\dagger})^{-1}(a) \in \mathrm{Dom}(D^{\pm})$ and $D^{\pm}((\pi^{\dagger})^{-1}(a)) = (\pi^{\dagger})^{-1}(b)$.
					Let $\varphi$ be a compact-supported smooth section of $V\otimes S_X$ on $X\setminus \tilde{Z}$.
					Then $\pi^{\dagger}(\varphi)$ also has a compact support.
					Hence we have $\i<a,(\dirac^{\pm}_{A_4})^{\bigstar}(\pi^{\dagger}(\varphi))>_{L^2}=\i<b,\pi^{\dagger}(\varphi)>_{L^2}$.
					Since $(\pi^{\dagger})^{-1}$ is isometric,
					we obtain $\i<(\pi^{\dagger})^{-1}(a),(D^{\pm})^{\bigstar}(\varphi)> = \i<(\pi^{\dagger})^{-1}(b),\varphi>$.
					Therefore we have $(\pi^{\dagger})^{-1}(a) \in \mathrm{Dom}(D^{\pm})$ and $D^{\pm}((\pi^{\dagger})^{-1}(a)) = (\pi^{\dagger})^{-1}(b)$.
					We prove the converse.
					We take an arbitrary $c\in \mathrm{Dom}(D^{\pm})$, and set $d := D^{\pm}(c)$.
					Let $\chi$ be a compact-supported smooth section of $V_4\otimes S_{\tilde{P}}$ on $\tilde{P}\setminus \tilde{Z}_4$.
					We take the orthogonal decomposition $\chi=\chi^{S^1} + \chi^{\bot}\in L^2(\tilde{P},V_4 \otimes S^{\mp}_{\tilde{P}})^{S^1} \oplus (L^2(\tilde{P},V_4 \otimes S^{\mp}_{\tilde{P}})^{S^1})^{\bot}$.
					Then $\chi^{S^1}$ and $\chi^{\bot}$ are also compact-supported smooth sections on $\tilde{P}\setminus \tilde{Z}_4$,
					and we have $(\dirac^{\pm}_{A_4})^{\bigstar}(\chi^{S^1}) \in L^2(\tilde{P},V_4 \otimes S^{\mp}_{\tilde{P}})^{S^1}$ and $(\dirac^{\pm}_{A_4})^{\bigstar}(\chi^{\bot}) \in (L^2(\tilde{P},V_4 \otimes S^{\mp}_{\tilde{P}})^{S^1})^{\bot}$.
					Hence we obtain $\i<\pi^{\dagger}(c),(\dirac^{\pm}_{A_4})^{\bigstar}(\chi)>_{L^2} = \i<\pi^{\dagger}(c),(\dirac^{\pm}_{A_4})^{\bigstar}(\chi^{S^1})>_{L^2} = \i<c,(\pi^{\dagger})^{-1}((\dirac^{\pm}_{A_4})^{\bigstar}(\chi^{S^1}))>_{L^2} = \i<c, (D^{\pm})^{\bigstar}((\pi^{\dagger})^{-1}(\chi^{S^1}))>_{L^2} = \i<d, (\pi^{\dagger})^{-1}(\chi^{S^1})>_{L^2} = \i<\pi^{\dagger}(d),\chi^{S^1}>_{L^2} = \i<\pi^{\dagger}(d),\chi>_{L^2}$.
					Therefore $\dirac^{\pm}_{A_4}(\pi^{\dagger}(c)) = \pi^{\dagger}(d)$ holds on $P=\tilde{P}\setminus \tilde{Z}_4$.
					Here we prepare the following lemma.
					\begin{Lem}\label{Lem:prolong of sol. of Dirac eq.}
						Take arbitrary $u\in L^2(\tilde{P},V_4\otimes S^{\pm}_{\tilde{P}})$ and $v\in L^2(\tilde{P},V_4\otimes S^{\mp}_{\tilde{P}})$.
						If $u$ and $v$ satisfy $\dirac^{\pm}_{A_4}(u)=v$ on $P$,
						then we have $\dirac^{\pm}_{A_4}(u)=v$ on whole $\tilde{P}$.
					\end{Lem}
					If we admit this lemma,
					then we obtain $\dirac^{\pm}_{A_4}(\pi^{\dagger}(c)) = \pi^{\dagger}(d)$ on $\tilde{P}$.
					Hence the proof is complete. 
				\end{proof}
				\begin{proof}[{\upshape \textbf{proof of Lemma \ref{Lem:prolong of sol. of Dirac eq.}}}]
					Take $\varphi\in \Gamma(\tilde{P},V_4\otimes S^{\mp}_{\tilde{P}})$.
					Let $\kappa:\mathbb{R}\to [0,1]$ be a smooth function that satisfies $\kappa((-\infty,-1))=\{0\}$ and $\kappa((-1/2,\infty))=\{1\}$.
					Set $\psi_n:\tilde{P}\to[0,1]$ to be $\psi_n(x):= \kappa(n\log(d_{\tilde{g}_P}(x,\tilde{Z}_4)))$ for $n\in\mathbb{N}$.
					Then $\psi_n\cdot\varphi$ has a compact support on $\tilde{P}\setminus\tilde{Z}_4$.
					Hence we obtain $\i<u,(\dirac^{\pm}_{A_4})^{\bigstar}(\psi_n\cdot\varphi)>_{L^2}= \i<u,\psi_n\cdot (\dirac^{\pm}_{A_4})^{\bigstar}(\varphi)>_{L^2} + \i<u,\mathrm{clif}_{\tilde{P}}(d\psi_n)\varphi>_{L^2} = \i<v,\psi_n\cdot\varphi>_{L^2}$.
					Since we have an estimate $|\kappa'(nx)|\leq (x|\log(x)|)^{-1}||\kappa'||_{L^\infty}$ for $0<x<1$,
					$|d\psi_n|$ is dominated by an $L^2$-function that is independent of $n$.
					Therefore we obtain $\i<u,(\dirac^{\pm}_{A_4})^{\bigstar}(\varphi)>_{L^2}= \i<v,\varphi>$ by the dominated convergence theorem.
				\end{proof}
			We will associate the $S^1$-invariant indices of $\dirac^{\pm}_{A_4}$ and the indices of $\hat{\dirac}^{\pm}_{(\hat{A},\hat{\Phi})}$.
			\begin{Prp}
				We have $\mathrm{Ind}(\dirac^{\pm}_{A_4})^{S^1} = \mathrm{Ind}(\hat{\dirac}^{\pm}_{(\hat{A},\hat{\Phi})})$.
			\end{Prp}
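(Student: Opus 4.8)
The plan is to reduce, via Proposition \ref{Prp:L^2-lift of Dirac op.}, to comparing the two honest Dirac-type operators on $X\setminus\tilde{Z}$, and then to pass from $D^{\pm}$ to $\hat{\dirac}^{\pm}_{(\hat{A},\hat{\Phi})}$ by first discarding the compactly supported correction $\epsilon^{\pm}$ and then conjugating away the weights $\lambda^{\pm}_i$. By Proposition \ref{Prp:L^2-lift of Dirac op.} one has $\mathrm{Ind}(\dirac^{\pm}_{A_4})^{S^1}=\mathrm{Ind}(D^{\pm})$, and under $\pi^{\dagger}$ the domain $\mathrm{Dom}(D^{\pm})$ is identified with the $S^1$-invariant part of the $L^2_1$-domain of $\dirac^{\pm}_{A_4}$ on the \emph{closed} manifold $\tilde{P}$; hence the inclusion $\mathrm{Dom}(D^{\pm})\hookrightarrow L^2(X\setminus\tilde{Z},V\otimes S_X)$ is compact. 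Since $\epsilon^{\pm}$ is a bounded endomorphism whose support is a compact subset of $X\setminus\tilde{Z}$, it defines a relatively compact perturbation, so $T^{\pm}:=\lambda^{\pm}_1\hat{\dirac}^{\pm}_{(\hat{A},\hat{\Phi})}\lambda^{\pm}_2=D^{\pm}-\epsilon^{\pm}$ is again a closed Fredholm operator and $\mathrm{Ind}(T^{\pm})=\mathrm{Ind}(D^{\pm})$.

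Next I would record that $(T^{+})^{\ast}=T^{-}$. Taking formal adjoints is compatible with the isometry $\pi^{\dagger}$ and with $(\dirac^{+}_{A_4})^{\ast}=\dirac^{-}_{A_4}$, while the formal adjoint of $\lambda^{+}_1\hat{\dirac}^{+}_{(\hat{A},\hat{\Phi})}\lambda^{+}_2$ is $\lambda^{+}_2\hat{\dirac}^{-}_{(\hat{A},\hat{\Phi})}\lambda^{+}_1=\lambda^{-}_1\hat{\dirac}^{-}_{(\hat{A},\hat{\Phi})}\lambda^{-}_2$ because $\lambda^{+}_1=\lambda^{-}_2$ and $\lambda^{+}_2=\lambda^{-}_1$; feeding this through the defining identity $\pi^{\dagger}(\epsilon^{\pm})(s)=\dirac^{\pm}_{A_4}\pi^{\dagger}(s)-\pi^{\dagger}(\lambda^{\pm}_1\hat{\dirac}^{\pm}_{(\hat{A},\hat{\Phi})}\lambda^{\pm}_2)(s)$ gives $(\epsilon^{+})^{\ast}=\epsilon^{-}$, and together with $(D^{+})^{\ast}=D^{-}$ from Proposition \ref{Prp:L^2-lift of Dirac op.} this yields $(T^{+})^{\ast}=T^{-}$. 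In particular $\mathrm{coker}(T^{\pm})\simeq\mathrm{Ker}(T^{\mp})$.

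The core of the proof is that multiplication by $\lambda^{\pm}_2$ restricts to a linear isomorphism $\mathrm{Ker}(T^{\pm})\xrightarrow{\sim}\mathrm{Ker}(\hat{\dirac}^{\pm}_{(\hat{A},\hat{\Phi})})$. One inclusion is free: if $u\in\mathrm{Ker}(T^{\pm})$ then $\lambda^{\pm}_2u\in L^2$ and $\lambda^{\pm}_1\hat{\dirac}^{\pm}_{(\hat{A},\hat{\Phi})}(\lambda^{\pm}_2u)=0$, so (as $\lambda^{\pm}_1>0$) $\hat{\dirac}^{\pm}_{(\hat{A},\hat{\Phi})}(\lambda^{\pm}_2u)=0$, whence $\lambda^{\pm}_2u\in\mathrm{Ker}(\hat{\dirac}^{\pm}_{(\hat{A},\hat{\Phi})})$; injectivity is clear since $\lambda^{\pm}_2>0$. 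For surjectivity I must verify that $(\lambda^{\pm}_2)^{-1}s\in L^2$ for each $s\in\mathrm{Ker}(\hat{\dirac}^{\pm}_{(\hat{A},\hat{\Phi})})$, and this is where Proposition \ref{Prp:Ker of Dirac monopole} is used: near a point of $Z'$ the operator $\hat{\dirac}^{\pm}_{(\hat{A},\hat{\Phi})}$ is a trivial Dirac operator, so an $L^2$ harmonic spinor extends smoothly over the point and is bounded there, while $(\lambda^{\pm}_2)^{-1}$ either equals $1$ or is $O(R_{p'}^{-1/2})$; near a point of $Z$ one has $(\lambda^{\pm}_2)^{-1}=1$ or $(\lambda^{\pm}_2)^{-1}=f^{1/2}=O(R_p^{-1/2})$, and Proposition \ref{Prp:Ker of Dirac monopole} shows that the relevant $L^2$ harmonic spinors of the flat Dirac monopole decay at least like $R_p^{|k|/2}$; in both cases $(\lambda^{\pm}_2)^{-1}s$ is square-integrable. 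The identical argument (with $\lambda^{\mp}_2=\lambda^{\pm}_1$ in place of $\lambda^{\pm}_2$) gives $\mathrm{Ker}(T^{\mp})\simeq\mathrm{Ker}(\hat{\dirac}^{\mp}_{(\hat{A},\hat{\Phi})})$. Combining everything,
\begin{align*}
\mathrm{Ind}(\dirac^{\pm}_{A_4})^{S^1}
&=\mathrm{Ind}(T^{\pm})
=\dim\mathrm{Ker}(T^{\pm})-\dim\mathrm{Ker}(T^{\mp})\\
&=\dim\mathrm{Ker}(\hat{\dirac}^{\pm}_{(\hat{A},\hat{\Phi})})-\dim\mathrm{Ker}(\hat{\dirac}^{\mp}_{(\hat{A},\hat{\Phi})})
=\mathrm{Ind}(\hat{\dirac}^{\pm}_{(\hat{A},\hat{\Phi})}),
\end{align*}
where the last equality uses that $\hat{\dirac}^{+}_{(\hat{A},\hat{\Phi})}$ and $\hat{\dirac}^{-}_{(\hat{A},\hat{\Phi})}$ are mutually adjoint.

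I expect the square-integrability $(\lambda^{\pm}_i)^{-1}s\in L^2$ to be the main obstacle: although the weights blow up at $\tilde{Z}$, the argument must extract from Proposition \ref{Prp:Ker of Dirac monopole} (and from removable singularities for the trivial Dirac operator at points of $Z'$) that the decay of harmonic spinors at the singular points strictly dominates the growth $O(R^{-1/2})$ of the weight. The remaining ingredients — the relative compactness of $\epsilon^{\pm}$, the compactness of $\mathrm{Dom}(T^{\pm})\hookrightarrow L^2$, and the adjoint identity $(\epsilon^{+})^{\ast}=\epsilon^{-}$ — are routine once Proposition \ref{Prp:L^2-lift of Dirac op.} is in hand.
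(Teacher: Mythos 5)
Your proposal is correct and follows essentially the same route as the paper: reduce via Proposition \ref{Prp:L^2-lift of Dirac op.} to $D^{\pm}$, discard $\epsilon^{\pm}$ as a compact perturbation, and identify the kernel and cokernel of $\lambda^{\pm}_1\hat{\dirac}^{\pm}_{(\hat{A},\hat{\Phi})}\lambda^{\pm}_2$ with those of $\hat{\dirac}^{\pm}_{(\hat{A},\hat{\Phi})}$ by multiplication by the weights, using the asymptotics of Proposition \ref{Prp:Ker of Dirac monopole} to check the $L^2$-conditions. The only cosmetic difference is that the paper first reduces to the $+$ sign by adjointness, whereas you treat both signs symmetrically; your more explicit discussion of why $(\lambda^{\pm}_2)^{-1}s\in L^2$ at the two kinds of punctures is exactly the point the paper compresses into one sentence.
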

			\begin{proof}
				If we prove $\mathrm{Ind}(\dirac^{+}_{A_4})^{S^1} = \mathrm{Ind}(\hat{\dirac}^{+}_{(\hat{A},\hat{\Phi})})$,
				then we obtain $\mathrm{Ind}(\dirac^{-}_{A_4})^{S^1} = -\mathrm{Ind}(\dirac^{+}_{A_4})^{S^1} = -\mathrm{Ind}(\hat{\dirac}^{+}_{(\hat{A},\hat{\Phi})}) =\mathrm{Ind}(\hat{\dirac}^{-}_{(\hat{A},\hat{\Phi})})$
				because $\hat{\dirac}^{\pm}_{(\hat{A},\hat{\Phi})}$ are adjoint to each other.
				Hence we only need to prove $\mathrm{Ind}(\dirac^{+}_{A_4})^{S^1} = \mathrm{Ind}(\hat{\dirac}^{+}_{(\hat{A},\hat{\Phi})})$.
				By Proposition \ref{Prp:L^2-lift of Dirac op.},
				it suffices to show $\mathrm{Ind}(\hat{\dirac}^{+}_{(\hat{A},\hat{\Phi})}) = \mathrm{Ind}(D^{+})$.
				Since the support of $\epsilon^{+}$ is compact in $X\setminus Z$,
				$\lambda^{+}_1\hat{\dirac}^{+}_{(\hat{A},\hat{\Phi})}\lambda^{+}_2$ is a closed Fredholm operator and it has the same index as $D^{+}$.
				By the same asymptotic analysis in Proposition \ref{Prp:Ker of Dirac monopole},
				for any solutions $s\in \Gamma(X\setminus \tilde{Z}, S_X\otimes V)$ of the equation $\hat{\dirac}^{+}_{(\hat{A},\hat{\Phi})}(s)=0$,
				we have $s\in L^2$ if and only if $(\lambda^{+}_2)^{-1}s\in L^2$. 
				Hence we have the natural equality $\mathrm{Ker}(\lambda^{+}_1\hat{\dirac}^{+}_{(\hat{A},\hat{\Phi})}\lambda^{+}_2)\cap L^2 = (\lambda^{+}_2)^{-1}\cdot(\mathrm{Ker}(\hat{\dirac}^{+}_{(\hat{A},\hat{\Phi})})\cap L^2)$,
				where $(\lambda^{+}_2)^{-1}\cdot(\mathrm{Ker}(\hat{\dirac}^{+}_{(\hat{A},\hat{\Phi})})\cap L^2)$ means the set $\{(\lambda^{+}_2)^{-1}\cdot s \mid s\in\mathrm{Ker}(\hat{\dirac}^{+}_{(\hat{A},\hat{\Phi})})\cap L^2\}$.
				By a similar way,
				we also have $\mathrm{Cok}(\hat{\dirac}^{+}_{(\hat{A},\hat{\Phi})})\cap L^2 = \mathrm{Ker}(\hat{\dirac}^{-}_{(\hat{A},\hat{\Phi})})\cap L^2$ and $\mathrm{Cok}(\lambda^{+}_1\hat{\dirac}^{+}_{(\hat{A},\hat{\Phi})}\lambda^{+}_2)\cap L^2 = \mathrm{Ker}(\lambda^{-}_1\hat{\dirac}^{-}_{(\hat{A},\hat{\Phi})}\lambda^{-}_2)\cap L^2 = (\lambda^{-}_2)^{-1}\cdot(\mathrm{Ker}(\hat{\dirac}^{-}_{(\hat{A},\hat{\Phi})})\cap L^2)$.
				Therefore we obtain $\mathrm{Ind}(\hat{\dirac}^{+}_{(\hat{A},\hat{\Phi})}) = \mathrm{Ind}(\lambda^{+}_1\hat{\dirac}^{+}_{(\hat{A},\hat{\Phi})}\lambda^{+}_2) = \mathrm{Ind}(D^{+})$,
				which completes the proof.
			\end{proof}
		
			By following \cite{Ref:Ati-Sin},
			we calculate the $S^1$-equivariant index $\mathrm{Ind}(\dirac^{\pm}_{A_4})^{S^1}$.
			\begin{Lem}\label{Lem:weight of spinor}
				For $p\in {Z}_4$\;(resp. $Z'$),
				the weights of the fiber $S^{+}_{\tilde{P}}|_{p}$ and $S^{-}_{\tilde{P}}|_{p}$ are $(0,0)$ and $(-1,1)$\;(resp. $(-1,1)$ and $(0,0)$) respectively.
			\end{Lem}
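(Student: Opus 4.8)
The plan is to reduce the computation to the flat Hopf model at each fixed point of the $S^1$-action on $\tilde{P}$. I would first note that the action $\theta\cdot(z_1,z_2)=(e^{\sqrt{-1}\theta}z_1,e^{-\sqrt{-1}\theta}z_2)$ on $\mathbb{C}^2\setminus\{0\}$ is free, so the $S^1$-action on $P=\tilde{P}\setminus\tilde{Z}_4$ is free, and the fixed-point set of the $S^1$-action on $\tilde{P}$ is exactly the finite set $\tilde{Z}_4=Z_4\sqcup Z'_4$ of points added in the one-point compactification, all of them isolated. For $p\in Z_4$, the construction of $\tilde{P}$ identifies an $S^1$-invariant neighbourhood of $p$ with a neighbourhood of the origin in $\mathbb{R}^4=\mathbb{C}^2$ carrying the above Hopf action; moreover, by the estimate $|g_4-2g_{4,\mathrm{Euc}}|_{g_{4,\mathrm{Euc}}}=O(r_4)$ of Lemma \ref{Lem:prepare of Pauly condition} the prolonged metric equals $2g_{4,\mathrm{Euc}}$ at the origin, so the orientation, the spin structure, and hence the fibres of $S^{\pm}_{\tilde{P}}$ together with the $S^1$-action on them agree at $p$ with those of flat $\mathbb{R}^4$. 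For $p'\in Z'_4$ the same holds with $\mathbb{R}^4$ replaced by $-\mathbb{R}^4$, i.e.\ with the opposite orientation, as recorded in the construction of $\tilde{P}$.

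Next I would read off the weights in the flat model. As an $S^1$-representation $T_0\mathbb{C}^2=\mathbb{C}_{(1)}\oplus\mathbb{C}_{(-1)}$, where the subscript denotes the weight (the integer $m$ for which $\theta$ acts by $e^{\sqrt{-1}m\theta}$) on the corresponding coordinate line $\mathbb{C}z_i$. Using the standard identifications $S^{+}\simeq\Omega^{0,0}_{\mathbb{C}^2}\oplus\Omega^{0,2}_{\mathbb{C}^2}$ and $S^{-}\simeq\Omega^{0,1}_{\mathbb{C}^2}$ of the half-spinor bundles of $\mathbb{C}^2$ with its standard complex structure and orientation (recalled in Section \ref{Sec:Preliminary}), the fibre $S^{+}|_0=\mathbb{C}\oplus\mathbb{C}\,d\bar z_1\wedge d\bar z_2$ decomposes into the $S^1$-weights $0$ and $(-1)+(+1)=0$, while $S^{-}|_0=\mathbb{C}\,d\bar z_1\oplus\mathbb{C}\,d\bar z_2$ carries the weights $-1$ and $1$. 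Hence for $p\in Z_4$ the weights of $S^{+}_{\tilde{P}}|_p$ and $S^{-}_{\tilde{P}}|_p$ are $(0,0)$ and $(-1,1)$.

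For $p'\in Z'_4$ I would then invoke the elementary fact that reversing the orientation of a spin $4$-manifold interchanges the two half-spinor bundles: the Clifford action of the volume form changes sign, so the chirality grading $S_{\tilde{P}}=S^{+}_{\tilde{P}}\oplus S^{-}_{\tilde{P}}$ is swapped, while the $S^1$-action on the total spinor bundle is unaffected. Therefore at $p'$ the fibre $S^{+}_{\tilde{P}}|_{p'}$ carries exactly the weights the flat model assigns to $S^{-}$, namely $(-1,1)$, and $S^{-}_{\tilde{P}}|_{p'}$ carries $(0,0)$, which is the assertion.

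The step I expect to require the most care is the bookkeeping of orientation and chirality conventions: one must check that the identifications with the \emph{standard} Hopf fibration over $p\in Z$ and with the \emph{inverse-oriented} one over $p'\in Z'$ are consistent with $\mathrm{deg}_p(P)=-1$, $\mathrm{deg}_{p'}(P)=1$ and with the orientation $-\xi\wedge\pi^{\ast}\mathrm{vol}_{(X,\hat g)}$ fixed on $P$, and that the labelling $S^{+}\simeq\Omega^{0,0}_{\mathbb{C}^2}\oplus\Omega^{0,2}_{\mathbb{C}^2}$, $S^{-}\simeq\Omega^{0,1}_{\mathbb{C}^2}$ is the one matching that orientation. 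Once these conventions are pinned down the weight count itself is immediate; one also checks in passing that the answer is insensitive to replacing the Hopf action by its inverse, since that only permutes the two coordinate lines.
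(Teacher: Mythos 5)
Your proof is correct and follows essentially the same route as the paper: identify a neighbourhood of each added point with the standard (resp.\ inverse-oriented) Hopf model, read off the weights from $S^{+}\simeq\Omega^{0,0}_{\mathbb{C}^2}\oplus\Omega^{0,2}_{\mathbb{C}^2}$ and $S^{-}\simeq\Omega^{0,1}_{\mathbb{C}^2}$, and swap chiralities at the orientation-reversed points. Your explicit weight bookkeeping for $d\bar z_1$, $d\bar z_2$ and the remark on orientation reversal exchanging $S^{\pm}$ merely spell out what the paper leaves implicit.
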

			\begin{proof}
				For $p\in {Z}_4$,
				the projection $\pi|_{B(p,\eps)}:B(p,\eps) \to \pi(B(p,\eps))$ can be identified with the Hopf fibration $\mathbb{R}^4 = \mathbb{C}^2 \to \mathbb{R}^3$ in Section 1.
				By the natural isomorphisms $S^{+}_{\mathbb{C}^2} \simeq \Omega^{0,0}_{\mathbb{C}^2}\oplus\Omega^{0,2}_{\mathbb{C}^2}$ and $S^{-}_{\mathbb{C}^2} \simeq \Omega^{0,1}_{\mathbb{C}^2}$,
				the weights of $S^{+}_{\tilde{P}}|_{p}$ and $S^{-}_{\tilde{P}}|_{p}$ are $(0,0)$ and $(-1,1)$ respectively.
				As a similar way,
				for $p'\in {Z}'_4$,
				the projection $\pi|_{B(p',\eps)}:B(p',\eps) \to \pi(B(p',\eps))$ can be identified with the inverse-oriented Hopf fibration $-\mathbb{R}^4 \to \mathbb{R}^3$.
				Therefore the weights of $S^{+}_{\tilde{P}}|_{p'}$ and $S^{-}_{\tilde{P}}|_{p'}$ are $(-1,1)$ and $(0,0)$ respectively.
			\end{proof}
			\begin{Prp}\label{Prp:S^1-inv ind. formula}
				The $S^1$-invariant index $\mathrm{Ind}(\dirac^{\pm}_{A_4})^{S^1}$ is given as
				\[
					\mathrm{Ind}(\dirac^{\pm}_{A_4})^{S^1} = \mp\sum_{p\in Z} \sum_{k_{p,i}>0}k_{p,i},
				\]
				where $\vec{k}_p =(k_{p,i})\in \mathbb{Z}^r$ is the weight of the monopole $(V,h,A,\Phi)$ at $p\in Z$.
			\end{Prp}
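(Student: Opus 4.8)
The plan is to obtain $\mathrm{Ind}(\dirac^{\pm}_{A_4})^{S^1}$ from the Atiyah--Bott--Segal--Singer Lefschetz fixed point formula for the spin Dirac operator on the closed $4$-fold $\tilde P$, following \cite{Ref:Ati-Sin}. First I would observe that, identifying $R(S^1)$ with $\mathbb{Z}[t,t^{-1}]$ in the usual way, the equivariant index $\mathrm{Ind}_{S^1}(\dirac^{+}_{A_4})=[\mathrm{Ker}(\dirac^{+}_{A_4})]-[\mathrm{Coker}(\dirac^{+}_{A_4})]$ becomes a Laurent polynomial $\chi(t)$, and $\mathrm{Ind}(\dirac^{+}_{A_4})^{S^1}$ is precisely the constant term of $\chi(t)$. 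This reduction uses the $S^1$-equivariance of $\dirac^{\pm}_{A_4}$ and the orthogonal decomposition of $L^2(\tilde P,V_4\otimes S^{\pm}_{\tilde P})$ into $S^1$-isotypic subspaces, each preserved by $\dirac^{\pm}_{A_4}$.

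Next I would determine the fixed locus and the local data. Since $\pi:P\to X\setminus\tilde Z$ is a principal $S^1$-bundle the $S^1$-action on $P$ is free, so the fixed point set of $\tilde P$ is the finite set $\tilde Z_4$: it consists of $|Z|$ isolated fixed points over $Z$, with local model the Hopf action $\theta\cdot(z_1,z_2)=(e^{\sqrt{-1}\theta}z_1,e^{-\sqrt{-1}\theta}z_2)$ on $\mathbb{C}^2$, and $|Z'|=|Z|$ isolated fixed points over $Z'$, with local model the same action on $\mathbb{C}^2$ but with the opposite orientation. For each such point I would record the $S^1$-weights on the tangent space (from the Hopf model), on $S^{\pm}_{\tilde P}$ (from Lemma \ref{Lem:weight of spinor}), and on $V_4$. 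By Proposition \ref{Prp:Pauly condition} the weights on $V_4$ at a point over $p\in Z$ are the components of $\vec k_p=(k_{p,i})$ up to permutation, whereas at a point over $p'\in Z'$ they all vanish, since there $V_4=\pi^{\ast}V$ with the flat connection $\pi^{\ast}\hat A$ (recall $\hat\Phi|_{B(p',\eps/2)}=0$) carries the trivial $S^1$-action. One should also verify that the $S^1$-action lifts to $V_4$ and to the spin structure over all of $\tilde P$, which follows from the $S^1$-invariance of $\xi$, $\hat A$, $\hat\Phi$ and the $S^1$-equivariance of the prolongation in Proposition \ref{Prp:Pauly condition}.

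A direct computation with this data shows that each fixed point over $p\in Z$ contributes $-\bigl(\sum_i t^{k_{p,i}}\bigr)\big/(t^{1/2}-t^{-1/2})^2$ to $\chi(t)$ and each fixed point over $p'\in Z'$ contributes $r\big/(t^{1/2}-t^{-1/2})^2$. Summing over $\tilde Z_4$ and using $|Z'|=|Z|$ and $r|Z|=\sum_{p\in Z}\sum_i 1$ gives
\[
	\chi(t)=\frac{\sum_{p\in Z}\sum_i(1-t^{k_{p,i}})}{(t^{1/2}-t^{-1/2})^2}
	=\frac{t\sum_{p\in Z}\sum_i(1-t^{k_{p,i}})}{(t-1)^2}.
\]
Because $X$ is closed we have $\sum_{p\in Z}\sum_i k_{p,i}=0$ by the Stokes theorem, so the numerator is divisible by $(t-1)^2$ and $\chi(t)$ is indeed a Laurent polynomial. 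Expanding $t(1-t^{k})\big/(t-1)^2$ as a Laurent series at $t=0$, the contribution of the $(p,i)$-summand to the constant term of $\chi(t)$ is $0$ when $k_{p,i}\geq 0$ and is $k_{p,i}$ when $k_{p,i}<0$. Hence $\mathrm{Ind}(\dirac^{+}_{A_4})^{S^1}=\sum_{p\in Z}\sum_{k_{p,i}<0}k_{p,i}=-\sum_{p\in Z}\sum_{k_{p,i}>0}k_{p,i}$, and since $\dirac^{-}_{A_4}$ is the adjoint of $\dirac^{+}_{A_4}$ we get $\mathrm{Ind}(\dirac^{-}_{A_4})^{S^1}=-\mathrm{Ind}(\dirac^{+}_{A_4})^{S^1}$, which is the asserted formula.

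The step I expect to be most delicate is the sign and orientation bookkeeping at the fixed points over $Z'$, where the local Hopf fibration carries the reversed orientation: one must be careful that the denominator $\det_{\mathbb{R}}(1-g^{-1})$ of the Lefschetz contribution is insensitive to the orientation while the spinor weights in Lemma \ref{Lem:weight of spinor} already encode the reversed orientation, so that the two kinds of fixed points enter $\chi(t)$ with the signs above. A second point requiring care is the reduction in the first paragraph from the constant term of the virtual character to the genuine index of $\dirac^{\pm}_{A_4}$ on $S^1$-invariant $L^2$-sections.
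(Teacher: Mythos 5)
Your proposal is correct and follows essentially the same route as the paper: both apply the Atiyah--Singer $S^1$-equivariant (Lefschetz) fixed point formula at the isolated fixed points $\tilde Z_4$, using Lemma \ref{Lem:weight of spinor} for the spinor weights and Proposition \ref{Prp:Pauly condition} for the weights of $V_4$, and then project onto the trivial isotypic component. The only cosmetic difference is that you extract the constant term of the Laurent polynomial $\chi(t)$ (using $\sum_{p}\sum_i k_{p,i}=0$ to see it is a genuine Laurent polynomial), whereas the paper equivalently averages the character over $S^1$.
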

			\begin{proof}
				According to \cite{Ref:Ati-Sin},
				The $S^1$-invariant index $\mathrm{Ind}(\dirac^{\pm}_{A_4})^{S^1}$ is given as
				\[
				\mathrm{Ind}(\dirac^{\pm}_{A_4})^{S^1} = 
				(2\pi)^{-1}\int_{S^1} \sum_{p\in \tilde{Z}_4}\frac{\mathrm{tr}_{\theta}((S^{\pm}_{\tilde{P}}\otimes V_4)|_{p})-\mathrm{tr}_{\theta}((S^{\mp}_{\tilde{P}}\otimes V_4)|_{p})}{\mathrm{tr}_{\theta}(\bigwedge^{-1}T_{p}\tilde{P})}d\theta,
				\]
				where $\mathrm{tr}_{\theta}$ is trace of the action of $\theta\in S^1$ and $\bigwedge^{-1}T_{p}\tilde{P}$ means the virtual vector space $\bigoplus_{i=0}^{\infty}(-1)^i\bigwedge^{i}T_{p}\tilde{P}$.
				Then by Lemma \ref{Lem:weight of spinor} we have 
				\begin{alignat*}{2}
					&\mathrm{tr}_{\theta}((S^{\pm}_{\tilde{P}}\otimes V_4)|_{p})-\mathrm{tr}_{\theta}((S^{\mp}_{\tilde{P}}\otimes V_4)|_{p}) =\pm 2(1+\cos\theta)\sum_{i}\exp(2\pi\sqrt{-1}k_{p,i}\theta)\;&(p\in Z)\\
					&\mathrm{tr}_{\theta}((S^{\pm}_{\tilde{P}}\otimes V_4)|_{p'})-\mathrm{tr}_{\theta}((S^{\mp}_{\tilde{P}}\otimes V_4)|_{p'}) =\mp2r(1-\cos\theta)\;&(p'\in Z')\\
					&\mathrm{tr}_{\theta}(\bigwedge{}^{-1}\,T_{\tilde{p}}\tilde{P}) =4(1-\cos\theta)^2\;&(\tilde{p}\in \tilde{Z}).
				\end{alignat*}
				Hence by straightforward computation we obtain the conclusion.
			\end{proof}
		
			Hence we obtain the following corollary.
			\begin{Cor}\label{Cor:Index formula of monopole}
				The indices of the Dirac operators $\dirac^{\pm}_{(A,\Phi)}$ are given as follows:
				\[
					\mathrm{Ind}(\dirac^{\pm}_{(A,\Phi)})  = \mp\sum_{p\in Z} \sum_{k_{p,i}>0}k_{p,i} = \pm\sum_{p\in Z} \sum_{k_{p,i}<0}k_{p,i},
				\]
					where $\vec{k}_p =(k_{p,i})\in \mathbb{Z}^r$ is the weight of the monopole $(V,h,A,\Phi)$ at $p\in Z$.
			\end{Cor}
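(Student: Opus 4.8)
The plan is to assemble the chain of identifications established over the preceding subsection, so that the corollary follows by direct substitution. First, by Theorem \ref{Thm:Index of modi. and orig. Dirac} together with the observation recorded just after it (that $\hat{\dirac}^{\pm}_{(\hat{A},\hat{\Phi})}$ are Fredholm, adjoint to each other, and carry the same indices as $\dirac^{\pm}_{(A,\Phi)}$), I would reduce the computation of $\mathrm{Ind}(\dirac^{\pm}_{(A,\Phi)})$ to that of $\mathrm{Ind}(\hat{\dirac}^{\pm}_{(\hat{A},\hat{\Phi})})$. Next I would invoke the preceding Proposition identifying $\mathrm{Ind}(\hat{\dirac}^{\pm}_{(\hat{A},\hat{\Phi})})$ with the $S^1$-equivariant index $\mathrm{Ind}(\dirac^{\pm}_{A_4})^{S^1}$ of the lifted instanton Dirac operator on the closed $4$-fold $\tilde{P}$; this in turn rests on Proposition \ref{Prp:L^2-lift of Dirac op.}, which matches the closed operators through the isometry $\pi^{\dagger}$. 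Finally, Proposition \ref{Prp:S^1-inv ind. formula} evaluates the equivariant index as $\mp\sum_{p\in Z}\sum_{k_{p,i}>0}k_{p,i}$, yielding the first equality.

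For the second equality I would use the Stokes-theorem constraint from the Remark after Definition \ref{Def:def of monopoles}: since $X$ is closed, $\sum_{p\in Z}\sum_{i}k_{p,i}=0$. Splitting into positive and negative parts gives $\sum_{p\in Z}\sum_{k_{p,i}>0}k_{p,i} = -\sum_{p\in Z}\sum_{k_{p,i}<0}k_{p,i}$, so $\mp\sum_{p\in Z}\sum_{k_{p,i}>0}k_{p,i} = \pm\sum_{p\in Z}\sum_{k_{p,i}<0}k_{p,i}$, as claimed.

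I do not expect a serious obstacle at this stage: Fredholmness, the $L^2$-lift to $\tilde{P}$, the passage to the $S^1$-equivariant setting, and the Atiyah--Singer fixed-point computation have all been carried out upstream. The only point requiring care is sign and orientation bookkeeping --- confirming that $\mathrm{Ind}(\dirac^{+}_{(A,\Phi)})$ versus $\mathrm{Ind}(\dirac^{-}_{(A,\Phi)})$ corresponds correctly to $\mathrm{Ind}(\dirac^{+}_{A_4})^{S^1}$ versus $\mathrm{Ind}(\dirac^{-}_{A_4})^{S^1}$ through $\pi^{\dagger}$ and the factorizations $\lambda^{\pm}_1\hat{\dirac}^{\pm}_{(\hat{A},\hat{\Phi})}\lambda^{\pm}_2$, and that the weights in Lemma \ref{Lem:weight of spinor} enter the fixed-point formula with the reversed orientation at the auxiliary points $Z'$. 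Since these matters are already settled in the cited statements, the corollary is obtained simply by composing the equalities above.
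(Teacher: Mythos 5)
Your proposal is correct and is exactly the argument the paper intends: the corollary is stated with no separate proof ("Hence we obtain the following corollary"), precisely because it is the composition of Theorem \ref{Thm:Index of modi. and orig. Dirac}, the identification $\mathrm{Ind}(\hat{\dirac}^{\pm}_{(\hat{A},\hat{\Phi})})=\mathrm{Ind}(\dirac^{\pm}_{A_4})^{S^1}$, and Proposition \ref{Prp:S^1-inv ind. formula}, together with the vanishing $\sum_{p\in Z}\sum_i k_{p,i}=0$ from the Stokes-theorem remark for the second equality. No gaps.
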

	\section{An index formula of Dirac operators on complete $3$-folds}
	Let $(X,g)$ be a complete oriented Riemannian $3$-fold such that the scalar curvature $Sc(g)$ is bounded.
	We fix a spin structure on $X$.
	Let $i: Y \hookrightarrow X$ be a relative compact region with a smooth boundary $\partial Y$.
	We take the orientation of $\partial Y$ to satisfy that $\nu\wedge\mathrm{vol}_{\partial Y}$ is positive for the inward normal unit $1$-form $\nu \in i^{\ast}\Omega^1(X)$.
	\subsection{The non-singular case}
		Following \cite{Ref:Rad},
		we recall the non-singular case.
		Let $(V,h,A)$ be a Hermitian bundle with a connection on $X$ and $\Phi$ be a skew-Hermitian endomorphism on $V$.
		We assume the following conditions.
		\begin{itemize}
			\item 
				Both $\Phi$ and $F(A)$ are bounded.
			\item
				We have $\nabla_A(\Phi)|_{x} = o(1)$ as $x\to\infty$.
			\item
				The inequality $\mathrm{inf}_{x\in X\setminus Y}\bigl\{|\lambda| \mid \mbox{$\lambda$ is an eigenvalue of $\Phi(x)$}\bigr\} > 0$ is satisfied.
		\end{itemize}
		We call this conditions \textit{the R\ra{a}de condition}.
		In \cite{Ref:Rad},
		R\ra{a}de proved the following theorem.
		\begin{Thm}\label{Thm:Rade's formula}
			The differential operators $\dirac^{\pm}_{(A,\Phi)} = \dirac_{A} \pm \Phi:L^2(X,V\otimes S_X) \to L^2(X,V\otimes S_X)$ are closed Fredholm,
			and their indices are given as follows:
			\[
				\mathrm{Ind}(\dirac^{\pm}_{(A,\Phi)})
				= \mp \int_{\partial Y} \mathrm{ch}(V^{+})
				= \pm \int_{\partial Y} \mathrm{ch}(V^{-}),
			\]
			where $V^{\pm}$ is a subbundle of $V|_{\partial Y}$ spanned by the eigenvectors of $\mp\sqrt{-1}\Phi$ with positive eigenvalues.
		\end{Thm}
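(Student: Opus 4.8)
This theorem is due to R\ra{a}de \cite{Ref:Rad}; the plan is to follow the standard scheme for Callias-type index theorems, namely: (i) obtain Fredholmness from an ``invertible at infinity'' estimate, (ii) localize the index to a neighbourhood of $\partial Y$ by a cutting-and-gluing argument, and (iii) evaluate the boundary contribution, using crucially that the interior index density of a Dirac operator on an odd-dimensional manifold carries no degree-$3$ part and so integrates to $0$.

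For step (i): for $s\in C^{\infty}_0(X\setminus \overline{Y},V\otimes S_X)$, integrating by parts and using that $\dirac_A$ is formally self-adjoint and $\Phi$ is skew-Hermitian, the cross term reduces to a zeroth-order term and one gets
\[
	\|\dirac^{\pm}_{(A,\Phi)}(s)\|^2_{L^2} = \|\dirac_A(s)\|^2_{L^2} + \|\Phi s\|^2_{L^2} \pm \i<\mathrm{clif}(\nabla_A(\Phi))s,s>_{L^2}.
\]
Since $\nabla_A(\Phi)=o(1)$ at infinity and the eigenvalues of $\Phi$ are bounded below by some $\delta>0$ on $X\setminus Y$, after replacing $Y$ by a slightly larger compact region (which does not affect $\int_{\partial Y}\mathrm{ch}(V^{\pm})$, since $V^{\pm}$ extends over the region where $\Phi$ is invertible), we obtain $\|\dirac^{\pm}_{(A,\Phi)}(s)\|_{L^2}\geq (\delta/\sqrt2)\,\|s\|_{L^2}$ for all $s$ supported outside $Y$. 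Combined with interior elliptic regularity and the Rellich--Kondrachov theorem on a compact neighbourhood of $Y$, this makes $\dirac^{\pm}_{(A,\Phi)}$ Fredholm; that $\dirac^{+}_{(A,\Phi)}$ and $\dirac^{-}_{(A,\Phi)}$ are mutually adjoint as closed operators follows, as in the proof of Proposition \ref{Prp:formal adj. eq. ana. adj.}, from a cut-off argument using the completeness of $X$ to build exhaustion functions with $L^2$-controlled gradients. (Boundedness of $\Phi$ and $F(A)$ is used here only to ensure $\mathrm{Dom}(\dirac^{\pm}_{(A,\Phi)})$ behaves as expected and that the deformations below stay Fredholm.)

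For steps (ii)--(iii): the index is unchanged under homotopies of $(g,A,\Phi)$ through data satisfying the R\ra{a}de condition, so I would first deform so that in a collar $\partial Y\times(-1,1)$ the metric is a product and $\Phi$ is normal-independent and respects the splitting $V|_{\partial Y}=V^{+}\oplus V^{-}$. Enlarging $Y$ so that in addition $|\nabla_A(\Phi)|<\delta^2$ outside $Y$, the estimate above shows that $\dirac^{+}_{(A,\Phi)}$ on the non-compact piece $X\setminus Y$, equipped with the Atiyah--Patodi--Singer boundary condition determined by $\Phi|_{\partial Y}$, is invertible. The cutting-and-gluing formula for Dirac operators with complementary APS conditions then identifies $\mathrm{Ind}(\dirac^{+}_{(A,\Phi)})$ with the APS index of $\dirac^{+}_{(A,\Phi)}$ on $Y$; the interior integral in the APS formula is the integral over $Y$ of the degree-$3$ part of $\hat A(Y)\,\mathrm{ch}(V)$, which vanishes since both factors live in even degrees, so what remains is the reduced $\eta$-contribution of the tangential operator on $\partial Y$. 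A collar analysis --- solutions of $\dirac^{+}_{(A,\Phi)}(s)=0$ decay in the $V^{+}$-direction and grow in the $V^{-}$-direction along the normal --- reduces this contribution to $-1$ times the index of the Dirac operator of $\partial Y$ twisted by $V^{+}$, which, because $\hat A$ of a surface equals $1$, is $\int_{\partial Y}\mathrm{ch}(V^{+})$. Hence $\mathrm{Ind}(\dirac^{+}_{(A,\Phi)})=-\int_{\partial Y}\mathrm{ch}(V^{+})$. For $\dirac^{-}_{(A,\Phi)}$ one uses $\dirac^{-}_{(A,\Phi)}=(\dirac^{+}_{(A,\Phi)})^{\ast}=\dirac^{+}_{(A,-\Phi)}$ together with the fact that $\Phi\mapsto-\Phi$ interchanges $V^{+}$ and $V^{-}$; the final equality $\int_{\partial Y}\mathrm{ch}(V^{+})=-\int_{\partial Y}\mathrm{ch}(V^{-})$ holds because $V^{+}\oplus V^{-}=V|_{\partial Y}$ extends over $Y$ and hence $\int_{\partial Y}\mathrm{ch}(V)=0$ by Stokes.

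The hard part is step (iii): making the cutting-and-gluing index formula rigorous on the complete, possibly non-compact, manifold $X$, verifying the invertibility of the outer piece with the correct boundary condition, and carrying out the collar/APS bookkeeping that turns the boundary $\eta$-term into $\int_{\partial Y}\mathrm{ch}(V^{+})$ with the right sign. An alternative, closer to Callias \cite{Ref:Cal}, is to suspend $\dirac^{\pm}_{(A,\Phi)}$ to an operator on $X\times\mathbb{R}$ and use an excision/relative-index argument to reduce to an explicit product model near $\partial Y$; in either route the essential analytic content is the localization of the index to $\partial Y$.
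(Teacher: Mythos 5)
The paper does not prove this statement at all: Theorem \ref{Thm:Rade's formula} is quoted verbatim as R\ra{a}de's theorem from \cite{Ref:Rad}, with no argument given, so there is no proof in the paper to compare against. Your outline is nevertheless faithful to R\ra{a}de's actual method --- his paper is literally titled ``Callias' index theorem, elliptic boundary conditions, and cutting and gluing'' --- and the pieces you do spell out are correct: the identity $\|\dirac^{\pm}_{(A,\Phi)}(s)\|^2_{L^2}=\|\dirac_A(s)\|^2_{L^2}+\|\Phi s\|^2_{L^2}\pm\i<\mathrm{clif}(\nabla_A(\Phi))s,s>_{L^2}$ for compactly supported $s$, the resulting coercivity outside a large compact set under the R\ra{a}de condition, the vanishing of the interior $\hat A\,\mathrm{ch}$ density in odd dimensions, and the Stokes argument giving $\int_{\partial Y}\mathrm{ch}(V^{+})=-\int_{\partial Y}\mathrm{ch}(V^{-})$ since $V$ extends over $Y$. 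What you have written is, however, only a proof scheme, not a proof: as you acknowledge, the entire content of the theorem lives in step (iii) --- justifying the cutting-and-gluing/relative-index formula on the complete non-compact $X$, proving invertibility of the exterior piece with the APS condition determined by $\Phi|_{\partial Y}$, and converting the boundary contribution into $\mp\int_{\partial Y}\mathrm{ch}(V^{\pm})$ with the correct sign --- and none of that is carried out. For the purposes of this paper that is acceptable, since the result is used as a black box; as a standalone proof it is incomplete.
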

	\subsection{The indices of twisted flat Dirac monopoles}
		Let $(L_k,h_k,A_k,\Phi_k)$ be the flat Dirac monopole of weight $k\in\mathbb{Z}$.
		For $a\in\mathbb{R}\setminus\{0\}$,
		we set $\Phi_{a,k} := \sqrt{-1}\left(a + (k/2r_3)\right)$.
		Then $(L_k,h_k,A_k,\Phi_{a,k})$ is also a Dirac-type monopole on $(\mathbb{R}^3,\{0\})$.
		\begin{Prp}
			The operators $\dirac^{\pm}_{(A_k,\Phi_{a,k})}$ are Fredholm.
			Moreover,
			we have $\mathrm{Ind}(\dirac^{\pm}_{(A_k,\Phi_{a,k})})=0$ if $ak>0$. 
		\end{Prp}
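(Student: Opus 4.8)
The plan is to treat this proposition as the rank-one, flat special case of the machinery already developed: near the singularity $0$ one reuses the local analysis of Section~\ref{subsec:Loc. prop. of flat Dirac mon.}, while at infinity the invertibility of $\Phi_{a,k}$ plays the role that compactness played in Section~\ref{Sec:CompactCase}, the point being that $(L_k,h_k,A_k,\Phi_{a,k})$ is a genuine monopole on the flat $\mathbb{R}^3$.

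For Fredholmness (which does not use the sign of $ak$) I would argue as follows. Since $\Phi_{a,k}-\Phi_k=\sqrt{-1}\,a\cdot\mathrm{Id}$ is a bounded, indeed parallel, endomorphism, on any punctured ball about $0$ the operators $\dirac^{\pm}_{(A_k,\Phi_{a,k})}$ and $\dirac^{\pm}_{(A_k,\Phi_k)}$ share the same domain with uniformly equivalent graph norms, so Corollary~\ref{Cor:Relich type thm for Dirac monopole} gives that the restriction of $\mathrm{Dom}(\dirac^{\pm}_{(A_k,\Phi_{a,k})})$ to a small punctured ball is compact into $L^2$. On $\mathbb{R}^3\setminus B(0,R)$ the monopole is smooth with bounded curvature, $Sc\equiv0$, and $|\Phi_{a,k}|\to|a|>0$; the monopole Weitzenb\"ock identity on flat $\mathbb{R}^3$, where the curvature term cancels the $\nabla_A\Phi$ term and leaves $\dirac^{-}_{(A_k,\Phi_{a,k})}\dirac^{+}_{(A_k,\Phi_{a,k})}=\nabla^{\bigstar}\nabla-\Phi_{a,k}^2=\nabla^{\bigstar}\nabla+|\Phi_{a,k}|^2$ (cf. the Weitzenb\"ock formula in the proof of Theorem~\ref{Thm:Index of modi. and orig. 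Dirac}), then yields $\|\dirac^{\pm}_{(A_k,\Phi_{a,k})}(s)\|_{L^2}^2\ge\||\Phi_{a,k}|\,s\|_{L^2}^2$ for sections supported outside a compact set, hence a coercive estimate modulo a fixed compact collar. Combining this with the Rellich--Kondrachov theorem on the compact region in between, the arguments of Propositions~\ref{Prp:Relich thm for modified monopole} and~\ref{Prp:formal adj. eq. ana. adj.} and of Theorem~\ref{Thm:Index of modi. and orig. Dirac} go through and show that $\dirac^{\pm}_{(A_k,\Phi_{a,k})}$ are closed Fredholm operators adjoint to one another.

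When $ak>0$ the eigenvalue $a+k/(2r_3)$ of $-\sqrt{-1}\,\Phi_{a,k}$ keeps a constant sign on $(0,\infty)$, so $|\Phi_{a,k}|=|a+k/(2r_3)|\ge|a|>0$ on all of $\mathbb{R}^3\setminus\{0\}$. I would first show $\ker\dirac^{+}_{(A_k,\Phi_{a,k})}=0$: any $s$ in it is smooth on $\mathbb{R}^3\setminus\{0\}$ and, by the separation of variables of Section~\ref{subsec:Loc. prop. of flat Dirac mon.} (the lower-order term $\sqrt{-1}\,a$ does not change the indicial roots at $r_3=0$, so the conclusion of Proposition~\ref{Prp:Ker of Dirac monopole} still applies and gives $s\in L^3$ with $\nabla_{A_k}s\in L^2$ near $0$), one integrates by parts against an inner logarithmic cutoff and an outer linear cutoff with no boundary contribution, obtaining $0=\|\dirac^{+}_{(A_k,\Phi_{a,k})}(s)\|_{L^2}^2=\|\nabla_{A_k}s\|_{L^2}^2+\||\Phi_{a,k}|\,s\|_{L^2}^2\ge|a|^2\|s\|_{L^2}^2$, so $s=0$. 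For $\ker\dirac^{-}_{(A_k,\Phi_{a,k})}$ I would run the separation of variables again: the ``harmonic'' modes coming from $\alpha^{\pm}\in H^0,H^1(\mathbb{P}^1,\mathcal{O}(k-1))$ contribute the explicit radial factors $r_3^{-(2\mp k)/2}e^{\pm a r_3}$, which for $ak>0$ fail to be square-integrable on $\mathbb{R}^3$ precisely when $\alpha^{\pm}\neq0$; and each $f^{\pm}_{\nu}$-mode obeys a $2\times2$ first-order system in $r_3$ which, after exchanging the two spinor components and replacing $(k,a)$ by $(-k,-a)$ (the symmetry $\mathcal{O}(k)^{\vee}\simeq\mathcal{O}(-k)$ of the flat Dirac monopole), becomes the system governing the $f^{\pm}_{\nu}$-modes of $\ker\dirac^{+}_{(A_{-k},\Phi_{-a,-k})}$; since $(-a)(-k)=ak>0$ and $|\Phi_{-a,-k}|\ge|a|$, the Weitzenb\"ock argument above applies to that operator and forces the unique $L^2$-at-$0$ solution of each such system to grow at infinity, so no $f^{\pm}_{\nu}$-mode is globally $L^2$. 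Hence $\ker\dirac^{-}_{(A_k,\Phi_{a,k})}=0$, and since $\dirac^{\pm}_{(A_k,\Phi_{a,k})}$ are Fredholm and mutually adjoint, $\mathrm{Ind}(\dirac^{\pm}_{(A_k,\Phi_{a,k})})=\dim\ker\dirac^{+}_{(A_k,\Phi_{a,k})}-\dim\ker\dirac^{-}_{(A_k,\Phi_{a,k})}=0$.

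The delicate point is the vanishing of $\ker\dirac^{-}$: the monopole Weitzenb\"ock formula controls only $\dirac^{-}_{(A,\Phi)}\dirac^{+}_{(A,\Phi)}$ — the composition $\dirac^{+}\dirac^{-}$ acquires an uncontrolled $\mathrm{clif}(\nabla_A\Phi)$ term, and complex conjugation trades the monopole for an anti-monopole rather than helping — so one is thrown back on the explicit ODE analysis of Proposition~\ref{Prp:Ker of Dirac monopole} adapted to $\Phi_{a,k}$ and on tracking which exponential branch the distinguished $L^2$-at-$0$ solution follows at infinity. The other point requiring care is the sign bookkeeping in the Weitzenb\"ock identity, so that on flat $\mathbb{R}^3$ the curvature term genuinely cancels the $\nabla_A\Phi$ term and leaves the non-negative potential $|\Phi_{a,k}|^2$; granting that, transferring the local estimates of Section~\ref{subsec:Loc. prop. of flat Dirac mon.} across the bounded perturbation $\sqrt{-1}\,a$ and carrying out the cutoff integrations by parts are routine variants of arguments already in the paper.
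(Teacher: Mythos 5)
Your argument is correct in outline, but it takes a genuinely different route from the paper's. The paper also separates variables into the radial modes $E_{\nu}=W_{\nu}\otimes L^2(\mathbb{R}_{>0},r^2dr)$, but it then only computes \emph{indices} mode by mode: Lemma \ref{Lem:ind. of ode.} shows each $2\times2$ radial operator $P_{\nu}=\partial_r-A_{\nu}/r-B$ has index $0$ by deforming it, through a compact perturbation, to a model operator whose kernel and cokernel are written down explicitly, and the harmonic ($\nu=0$) block is handled by the same direct calculation you do for the $\rho^{\pm}$-modes; at no point does the paper solve the connection problem for the actual operator or assert that the kernels vanish. You instead prove the stronger statement $\mathrm{Ker}(\dirac^{+}_{(A_k,\Phi_{a,k})})=\mathrm{Ker}(\dirac^{-}_{(A_k,\Phi_{a,k})})=0$, via the Bogomolny--Weitzenb\"ock identity for one sign and the $(k,a)\mapsto(-k,-a)$ symmetry of the radial systems for the other; that symmetry check is right (the diagonal and coupling coefficients literally coincide, since $n_{\nu}$ depends only on $|k|$), and your sign for the Weitzenb\"ock identity is the correct one — on flat $\mathbb{R}^3$ one has $\dirac^{-}_{(A,\Phi)}\dirac^{+}_{(A,\Phi)}=\nabla^{\bigstar}\nabla-\Phi^2$ with the paper's Clifford convention, even though the displayed formula in the proof of Theorem \ref{Thm:Index of modi. and orig. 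Dirac} has $\nabla^{\bigstar}\nabla$ and $\dirac^{-}\dirac^{+}$ on the wrong sides (harmless there, essential here). What your route buys is invertibility rather than merely vanishing of the index; what it costs is the justification of the cutoff integration by parts across the Dirac singularity, which needs uniform control of the mode expansion of a kernel element near $0$ (the $L^3$ bound of Proposition \ref{Prp:Dol. lemma for Dirac monopoles} alone does not make $\int|d\psi_n|^2|s|^2$ finite by H\"older; one must use the $r^{q_{\nu}+|k|/2-1}$ decay of the $\dirac^{+}$-kernel modes from Proposition \ref{Prp:Ker of Dirac monopole}), and the transfer of that local expansion to the perturbed tame symbol $\Phi_{a,k}$. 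These are fillable, and the paper's Lemma \ref{Lem:ind. of ode.} exists precisely to avoid having to fill them.
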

		\begin{proof}
			The Fredholmness of $\dirac^{\pm}_{(A_k,\Phi_{a,k})}$ follows from Corollary \ref{Cor:Relich type thm for Dirac monopole} and some standard arguments.
			The proof for the case $a>0$ and $k>0$ works for the case $a<0$ and $k<0$ mutatis mutandis.
			Hence we may assume $a>0$ and $k>0$.
			We take $f^{\pm}_{\nu} \in L^2(S^2,S^{\pm}_{S^2}\otimes \mathcal{O}(k))\;(\nu\in\mathbb{N})$ and $n_{\nu}>0$ as in subsection \ref{subsec:Loc. prop. of flat Dirac mon.}.
			We set vector subspaces $W_0:=H^0(\mathbb{P}^1,\mathcal{O}(k-1))\times \{0\}$ and $W_{\nu}:=(f^{+}_{\nu},0)\mathbb{C}\oplus (0,f^{-}_{\nu})\mathbb{C}$ of $L^2(S^2,\mathcal{O}(k)\otimes S_{S^2}) = L^2(S^2,\mathcal{O}(k)\otimes S^{+}_{S^2})\times L^2(S^2,\mathcal{O}(k)\otimes S^{-}_{S^2})$.
			Then we have a decomposition $L^2(S^2,\mathcal{O}(k)\otimes S_{S^2}) = \widehat{\bigoplus}_{\nu\geq0}W_{\nu}$,
			where $\widehat{\bigoplus}$ means $L^2$-completion of the direct sum.
			Hence we obtain the decomposition $L^2(\mathbb{R}^3\setminus\{0\}, L_k\otimes S_{\mathbb{R}^3}) = \widehat{\bigoplus}_{\nu\geq0} W_{\nu}\otimes L^2(\mathbb{R}_{>0},r^2dr)$,
			where $L^2(\mathbb{R}_{>0},r^2dr)$ is the weighted $L^2$-space on $\mathbb{R}_{>0}$ with the norm $||f||^2 = \int_{\mathbb{R}_{>0}} r^2 |f(r)|^2 dr$.
			We denote by $E_{\nu}$ the space $W_{\nu}\otimes L^2(\mathbb{R}_{>0},r^2dr)$.
			The Dirac operators $\dirac^{\pm}_{(A_k,\Phi_{a,k})}$ preserves this decomposition,
			and hence we obtain $\mathrm{Ind}(\dirac^{\pm}_{(A_k,\Phi_{a,k})})=\sum_{\nu}\mathrm{Ind}\left(\dirac^{\pm}_{(A_k,\Phi_{a,k})}|_{E_{\nu}}:E_{\nu}\to E_{\nu}\right)$.
			Here we prepare the following lemma.
			\begin{Lem}\label{Lem:ind. of ode.}
				We take Hermitian matrices
				\[
				A_{\nu} = 
				\left(
				\begin{array}{ll}
				-(2+k)/2&\sqrt{-1}n_{\nu}\\
				-\sqrt{-1}n_{\nu}&-(2-k)/2
				\end{array}
				\right),\;\;\;\;
				B = 
				\left(
				\begin{array}{ll}
				a&0\\
				0&-a
				\end{array}
				\right).
				\]
				We set the closed operator $P_{\nu}:\mathbb{C}^2\otimes L^2(\mathbb{R}_{>0},r^2dr) \to \mathbb{C}^2\otimes L^2(\mathbb{R}_{>0},r^2dr)$ to be $P_{\nu}(v) := \partial_r v -( A_{\nu}v/r + Bv)$.
				Then $P_{\nu}$ is closed Fredholm and $\mathrm{Ind}(P_{\nu})=0$.
			\end{Lem} 
			By this lemma we have $\mathrm{Ind}\left(\dirac^{\pm}_{(A_k,\Phi_{a,k})}|_{E_{\nu}}\right)=0$ unless $i=0$. Hence $\mathrm{Ind}(\dirac^{\pm}_{(A_k,\Phi_{a,k})})=\mathrm{Ind}\left(\dirac^{\pm}_{(A_k,\Phi_{a,k})}|_{E_{0}} \right)$.
			Moreover, we obtain $\mathrm{Ind}\left(\dirac^{\pm}_{(A_k,\Phi_{a,k})}|_{E_0}\right)=0$ by a straight calculation.
		\end{proof}
		
		\begin{proof}[{\upshape \textbf{(proof of Lemma \ref{Lem:ind. of ode.})}}]
			The Fredholmness can be easily seen.
			We take a function $C_{\nu}:\mathbb{R}_{>0}\to \mathrm{Mat}(2,\mathbb{C})$ as
			\[
				C_{\nu}(r):=
				\left\{
					\begin{array}{ll}
						A_{\nu}/r&(r\leq1)\\
						B&(r>1)
					\end{array}
				\right.
			\]
			and set a differential operator $\tilde{P}_{\nu}$ to be $\tilde{P}(v) := \partial_r v - C_{\nu}(r)v$.
			Since a compact perturbation does not change the index,
			$P_{\nu}$ and $\tilde{P}_{\nu}$ have the same indices.
			We can write any elements of the kernels of $\tilde{P}_{\nu}$ and the adjoint operator $\tilde{P}_{\nu}^{\bigstar}$ explicitly,
			and there are no non-zero $L^2$-solutions of $\tilde{P}_{\nu}(v)=0$ and $\tilde{P}_{\nu}^{\bigstar}(v)=0$.
			Hence we obtain $\mathrm{Ind}(P_{\nu})=0$,
			which is the desired equality.
		\end{proof}
	\subsection{The general case}
		Let $Z \subset Y\setminus \partial Y$ be a finite subset.
		Let $(V,h,A,\Phi)$ be a Dirac-type singular monopole on $(X,Z)$ of rank $r$ which satisfies the R\ra{a}de condition.
		We denote by $\vec{k}_{p}=(k_{p,i})\in\mathbb{Z}^r$ the weight of $(V,h,A,\Phi)$ at $p\in Z$.
		\begin{Thm}\label{Thm:Index formula for general complete $3$-folds}
			The Dirac operators $\dirac^{\pm}_{(A,\Phi)}$ are Fredholm and adjoint each other.
			The indices of $\dirac^{\pm}_{(A,\Phi)}$ are given as follows:
			\[
				\mathrm{Ind}(\dirac^{\pm}_{(A,\Phi)})
				=
				\mp\left\{
					\sum_{p\in Z}\sum_{k_{p,i}>0} k_{p,i}
					+ \int_{\partial Y}ch(V^{+})
				\right\}
				=
				\pm\left\{
					\sum_{p\in Z}\sum_{k_{p,i}<0} k_{p,i}
					+ \int_{\partial Y}ch(V^{-})
				\right\}.
			\]
		\end{Thm}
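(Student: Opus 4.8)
The strategy is to deduce the theorem from R\ra{a}de's Theorem \ref{Thm:Rade's formula} by a conformal surgery that replaces each Dirac-type singularity of $(V,h,A,\Phi)$ by a complete cylindrical end on which the R\ra{a}de condition holds. As a preliminary normalisation, exactly as in Lemma \ref{Lem:Pertubation of Clif prod.}--Theorem \ref{Thm:Index of modi. and orig. Dirac}, I would first modify $(g,A,\Phi)$ by a bounded perturbation supported in a small neighbourhood of $Z$ so that near each $p\in Z$ the metric is flat and the monopole is a direct sum of flat Dirac monopoles $\bigoplus_i(L_{k_{p,i}},h_{k_{p,i}},A_{k_{p,i}},\Phi_{k_{p,i}})$; since this perturbation is relatively compact it changes neither the Fredholm property, nor the index, nor the integrals over $\partial Y$. (The finitely many summands with $k_{p,i}=0$ are regular near $p$ and enter none of the terms below, so I suppress them and assume every $k_{p,i}\neq 0$.) Thus I may assume $(g,A,\Phi)$ is already in this normal form near $Z$.

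Fix $\eps>0$ small, write $R_p$ for the $g$-distance to $p$, and carry out the surgery: set $g^{\flat}:=\sigma(R_p)^{-2}g$ and, on the flat-Dirac summands, $\Phi^{\flat}:=R_p\,\Phi$ on $\coprod_p B(p,\eps)$, leave $A$ unchanged, and let $(g^{\flat},\Phi^{\flat})$ coincide with $(g,\Phi)$ off $\coprod_p B(p,2\eps)$, where $\sigma$ is a smooth function with $\sigma(R)=R$ for $R\le\eps$ and $\sigma(R)=1$ for $R\ge 2\eps$. Since $g$ is flat near $p$, on $B(p,\eps)$ the metric $g^{\flat}$ is the product cylinder $\bigl(\mathbb{R}_{>0}\times S^2, dt^2+g_{S^2}\bigr)$ with $t=-\log R_p$; hence $(X\setminus Z,g^{\flat})$ is a complete oriented Riemannian $3$-fold whose scalar curvature is still bounded, and each $p$ has become a complete end. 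Using the Dirac-type estimates $\Phi=\tfrac{\sqrt{-1}}{2R_p}\sum_i k_{p,i}\,\mathrm{Id}_{F_{p,i}}+O(1)$, $\nabla_A(R_p\Phi)=O(1)$, $|F(A)|_g=O(R_p^{-2})$, together with the conformal scaling $|\cdot|_{g^{\flat}}=R_p^{\,j}\,|\cdot|_g$ on $j$-forms, one checks that $\Phi^{\flat}$ and $F(A)$ are $g^{\flat}$-bounded, that $\nabla_A(\Phi^{\flat})=o(1)$ at every end (on the cylindrical ones because $\nabla_A(R_p\Phi)=O(1)$ rescales to $O(R_p)$, on the original one because $(V,h,A,\Phi)$ was R\ra{a}de), and that $\Phi^{\flat}\to\tfrac{\sqrt{-1}}{2}\sum_i k_{p,i}\,\mathrm{Id}_{F_{p,i}}$ out each new end, which is invertible. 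Hence $(V,h,A,\Phi^{\flat})$ on $(X\setminus Z,g^{\flat})$ satisfies the R\ra{a}de condition with a compact region $Y^{\flat}$ whose boundary is $\partial Y\sqcup\coprod_{p\in Z}S^2_p$, where $S^2_p$ is a cross-section of the $p$-cylinder.

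By Theorem \ref{Thm:Rade's formula}, the Dirac operators $\dirac^{\pm}_{\flat}:=\dirac_{g^{\flat},A}\pm\Phi^{\flat}$ are closed Fredholm, mutually adjoint, with $\mathrm{Ind}(\dirac^{\pm}_{\flat})=\mp\int_{\partial Y^{\flat}}\mathrm{ch}\bigl((V^{\flat})^{\pm}\bigr)$. The operators $\dirac^{\pm}_{\flat}$ and $\dirac^{\pm}_{(A,\Phi)}$ coincide off $\coprod_p B(p,2\eps)$, differ on the transition annuli by a relatively compact perturbation, and on $B(p,\eps)$ are related by conformal covariance of the Dirac operator in dimension $3$, namely $\dirac_{g^{\flat},A}=M_{R_p^{2}}\comp\dirac_{g,A}\comp M_{R_p^{-1}}$ with $M_{\psi}$ multiplication by $\psi$; under the natural isometry $L^2(g^{\flat})\simeq L^2(g)$ this makes $\dirac^{\pm}_{\flat}$ the conjugate of $\dirac^{\pm}_{(A,\Phi)}$ by $M_{R_p^{\mp 1/2}}$. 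These multiplications are unbounded at $p$, but the sharp asymptotics of $L^2$-harmonic spinors of flat Dirac monopoles near the puncture (Proposition \ref{Prp:Ker of Dirac monopole}) and the compact solution operator $G^{\pm}$ (Proposition \ref{Prp:Dol. lemma for Dirac monopoles}) show, by the same kind of index-preserving rescaling already used in Section \ref{Sec:CompactCase}, that the induced maps on $L^2$-kernels and $L^2$-cokernels are isomorphisms. Therefore $\dirac^{\pm}_{(A,\Phi)}$ are closed Fredholm with $\mathrm{Ind}(\dirac^{\pm}_{(A,\Phi)})=\mathrm{Ind}(\dirac^{\pm}_{\flat})$, and mutual adjointness follows from the cut-off argument of Proposition \ref{Prp:formal adj. eq. ana. adj.} near $Z$ (using $|a|,|b|\in L^3$ near $Z$ by Propositions \ref{Prp:Ker of Dirac monopole}--\ref{Prp:Dol. lemma for Dirac monopoles}) combined with R\ra{a}de's control at the original end. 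I expect this transfer of the index across the unbounded rescaling to be the main obstacle: one must show that the weighted $L^2$-kernels and cokernels before and after the surgery are canonically identified, for which the explicit near-singularity expansions of Section \ref{Sec:Preliminary} are essential; a subsidiary point is the verification that $(X\setminus Z,g^{\flat})$ and $(V,h,A,\Phi^{\flat})$ satisfy every hypothesis of R\ra{a}de's theorem and the orientation bookkeeping of the spheres $S^2_p$.

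Finally I would evaluate the boundary integral. On the component $\partial Y$ of $\partial Y^{\flat}$ we have $\Phi^{\flat}=\Phi$, so $(V^{\flat})^{\pm}|_{\partial Y}=V^{\pm}$ and the contribution is $\mp\int_{\partial Y}\mathrm{ch}(V^{\pm})$. On a small sphere $S^2_p$ the estimate $\Phi^{\flat}=\tfrac{\sqrt{-1}}{2}\sum_i k_{p,i}\,\mathrm{Id}_{F_{p,i}}+O(R_p)$ forces $(V^{\flat})^{+}|_{S^2_p}=\bigoplus_{k_{p,i}>0}F_{p,i}$ and $(V^{\flat})^{-}|_{S^2_p}=\bigoplus_{k_{p,i}<0}F_{p,i}$; with the orientation of $S^2_p$ as a boundary component of $Y^{\flat}$, i.e. the standard orientation of a small sphere around $p$, one gets $\int_{S^2_p}\mathrm{ch}\bigl((V^{\flat})^{+}\bigr)=\sum_{k_{p,i}>0}\deg(F_{p,i})=\sum_{k_{p,i}>0}k_{p,i}$ and likewise $\int_{S^2_p}\mathrm{ch}\bigl((V^{\flat})^{-}\bigr)=\sum_{k_{p,i}<0}k_{p,i}$. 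Summing over all components of $\partial Y^{\flat}$ gives
\[
	\mathrm{Ind}(\dirac^{\pm}_{(A,\Phi)})=\mp\Bigl\{\sum_{p\in Z}\sum_{k_{p,i}>0}k_{p,i}+\int_{\partial Y}\mathrm{ch}(V^{+})\Bigr\},
\]
and the second expression, with $V^{-}$ and $\sum_{k_{p,i}<0}k_{p,i}$, follows from the corresponding equality in Theorem \ref{Thm:Rade's formula} applied on $Y^{\flat}$ (equivalently, from $\int_{\partial Y^{\flat}}\mathrm{ch}(V)=0$ by Stokes' theorem on $Y^{\flat}$).
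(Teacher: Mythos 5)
Your proposal takes a genuinely different route from the paper. The paper first proves the closed-manifold case (Section \ref{Sec:CompactCase}) by lifting the singular monopole to an $S^1$-equivariant instanton on a closed $4$-fold $\tilde{P}$ and evaluating the $S^1$-invariant index by the Atiyah--Singer fixed-point formula; it then reduces the general complete case to that result and to Theorem \ref{Thm:Rade's formula} via the excision formula of \cite{Ref:Cha1}, gluing in auxiliary monopoles on $S^3$ and the twisted flat Dirac monopoles $(L_{-k},h_{-k},A_{-k},\Phi_{-k,-1})$, whose indices vanish by Lemma \ref{Lem:ind. of ode.}. You instead conformally blow up each singularity into a cylindrical end on which the R\ra{a}de condition holds and apply Theorem \ref{Thm:Rade's formula} once, reading off the contribution $\sum_{k_{p,i}>0}k_{p,i}$ from the new boundary spheres via $\deg(F_{p,i})=k_{p,i}$. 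If completed, this would bypass all of Section \ref{Sec:CompactCase} (the $4$-dimensional lift and the equivariant index theorem) as well as the excision formula, at the price of concentrating the entire $3$-dimensional difficulty into a single index-transfer lemma across the conformal change. The boundary bookkeeping and the final formula do come out correctly.

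There are, however, two genuine gaps. First, the parenthetical dismissal of the weights $k_{p,i}=0$ does not work: the decomposition $V\simeq\bigoplus_i F_{p,i}$ exists only near $p$, so a zero-weight summand cannot be ``suppressed'' from the global bundle, and on the corresponding part of the new cylindrical end one has $\Phi^{\flat}\to 0$, so the invertibility hypothesis of the R\ra{a}de condition fails and Theorem \ref{Thm:Rade's formula} does not apply as stated. (A fix exists --- e.g.\ turning on a constant Higgs field on those summands near the end and checking this does not move the index, or noting that $\dirac_{S^2}$ has no kernel so the untwisted cylinder operator is still invertible at infinity --- but it must be supplied.) Second, and more seriously, the index transfer $\mathrm{Ind}(\dirac^{\pm}_{(A,\Phi)})=\mathrm{Ind}(\dirac^{\pm}_{\flat})$ is the crux and is only asserted. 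Under the conjugation the question becomes whether $\dirac^{\pm}_{(A,\Phi)}$ and $\sigma^{1/2}\dirac^{\pm}_{(A,\Phi)}\sigma^{1/2}$ (with $\sigma\sim R_p$ near $p$) have the same maximal domains up to the weight, the same $L^2$-kernels and cokernels, and remain mutually adjoint closed Fredholm operators. Propositions \ref{Prp:Ker of Dirac monopole} and \ref{Prp:Dol. lemma for Dirac monopoles} give the weighted-versus-unweighted kernel comparison only for the exact flat model; upgrading this to the actual monopole and to the maximal closed extensions requires the domain-comparison and adjoint-identification arguments of the type carried out in Proposition \ref{Prp:formal adj. eq. ana. adj.} and Theorem \ref{Thm:Index of modi. and orig. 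Dirac} (note also that, contrary to what you write, the metric change on the transition annuli alters the principal symbol and is not a relatively compact perturbation --- only the global conjugation identity saves you there). So the difficulty has been relocated rather than removed, and that step must be written out for the proof to stand.
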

		\begin{proof}
			We may assume that $X$ is connected.
			The former claims are easy consequences of Corollary \ref{Cor:Relich type thm for Dirac monopole} and results in \cite{Ref:Rad}.
			We calculate the indices of $\dirac^{\pm}_{(A,\Phi)}$ by using the excision formula in \cite[Appendix B]{Ref:Cha1}.
			We set $k:= \sum_{p\in Z}\sum_{i}k_{p,i}$.
			
			First we consider the case $k=0$.
			Let $(V_0,h_0,A_0)$ be a trivial Hermitian bundle of rank $r$ with the trivial connection on $S^3$ and $\Phi_0$ the zero endomorphism on $V_0$.
			Let $U_N$ be the northern closed hall ball of $S^3$.
			We take a compact neighborhood $U$ of $Z$ that is diffeomorphic to a closed ball.
			We replace $(V,h,A,\Phi)|_U$ and $(V_0,h_0,A_0,\Phi_0)|_{U_N}$,
			and obtain $(\tilde{V}^{0},\tilde{h}^{0},\tilde{A}^{0},\tilde{\Phi}^{0})$ on $X$ and $(\tilde{V}_0,\tilde{h}_0,\tilde{A}_0,\tilde{\Phi}_0)$ on $S^3$.
			Then by the excision formula we have $\mathrm{ind}(\dirac^{\pm}_{(A,\Phi)}) + \mathrm{ind}(\dirac^{\pm}_{(A_0,\Phi_0)}) =\mathrm{ind}(\dirac^{\pm}_{(\tilde{A}^0,\tilde{\Phi}^0)}) + \mathrm{ind}(\dirac^{\pm}_{(\tilde{A}_0,\tilde{\Phi}_0)})$.
			Hence we obtain $\mathrm{ind}(\dirac^{\pm}_{(A,\Phi)}) = \mp\left(\sum_{k_{p,i}>0}k_{p,i} + \int_{\partial Y} \mathrm{ch}(V^{+})\right)$ by Corollary \ref{Cor:Index formula of monopole} and Theorem \ref{Thm:Rade's formula}.
			
			Next we consider the case $k\neq0$.
			The proof for the case $k>0$ remain valid for $k<0$ mutatis mutandis.
			Hence we may assume $k>0$. 
			Let $(V_1,h_1,A_1)$ be a Hermitian bundle of rank $r$ with a connection on $S^3$ outside the north pole $p_N$ and the south pole $p_S$, and $\Phi_1$ be a skew-Hermitian endomorphism of $V_1$.
			We assume that $(V_1,h_1,A_1,\Phi_1)$ is a Dirac-type singular monopole of weight $(k,0,\dots,0)$\;(resp. $(-k,0,\dots,0)$) on a neighborhood of $p_N$\;(resp. $p_S$).
			We replace $(V,h,A,\Phi)|_U$ and $(V_1,h_1,A_1,\Phi_1)|_{U_N}$,
			and obtain $(\tilde{V}^1,\tilde{h}^1,\tilde{A}^1,\tilde{\Phi}^1)$ on $X$ and $(\tilde{V}_1,\tilde{h}_1,\tilde{A}_1,\tilde{\Phi})$ on $S^3$.
			Then the excision formula shows $\mathrm{ind}(\dirac^{\pm}_{(A,\Phi)}) + \mathrm{ind}(\dirac^{\pm}_{(A_1,\Phi_1)}) = \mathrm{ind}(\dirac^{\pm}_{(\tilde{A}^1,\tilde{\Phi}^1)}) + \mathrm{ind}(\dirac^{\pm}_{(\tilde{A}_1,\tilde{\Phi}_1)})$.
			Hence  $\mathrm{ind}(\dirac^{\pm}_{(A,\Phi)}) \mp k = \mathrm{ind}(\dirac^{\pm}_{(\tilde{A}^1,\tilde{\Phi}^1)}) \mp \sum_{k_{p,i}>0}k_{p,i}$.
			We set $(V_2,h_2,A_2,\Phi_2):= (L_{-k},h_{-k},A_{-k},\Phi_{-k,-1})\oplus (\underline{\mathbb{C}^{r-1}}, \underline{h_{\mathbb{C}^{r-1}}},\underline{d},0)$ on $\mathbb{R}^3$,
			where $(\underline{\mathbb{C}^{r-1}},\underline{h_{\mathbb{C}^{r-1}}},\underline{d})$ be a trivial Hermitian bundle with the trivial connection on $\mathbb{R}^3$.
			We denote by $p\in X$ the singular point of $(\tilde{V}^1,\tilde{h}^1,\tilde{A}^1,\tilde{\Phi}^1)$.
			We glue $(\tilde{V}^1,\tilde{h}^1,\tilde{A}^1,\tilde{\Phi}^1)|_{X \setminus B(p,\eps)}$ and $(V_2,h_2,A_2,\Phi_2)|_{\mathbb{R}^3\setminus B(0,\eps)}$,
			and obtain $(\tilde{V}^2,\tilde{h}^2,\tilde{A}^2,\tilde{\Phi}^2)$ on $\tilde{X} = \left((X \setminus B(p,\eps))\sqcup \mathbb{R}^3\setminus B(0,\eps)\right)/\sim$,
			where $\sim$ is an identification of their boundaries.
			We also glue $ (\tilde{V}^1,\tilde{h}^1,\tilde{A}^1,\tilde{\Phi}^1)|_{\overline{B(p,\eps)}}$ and $(V_2,h_2,A_2,\Phi_2)|_{\overline{B(0,\eps)}}$ and obtain $(\tilde{V}_2,\tilde{h}_2,\tilde{A}_2,\tilde{\Phi}_2)$ on $S^3_{\eps} := (\overline{B(p,\eps)}\sqcup \overline{B(0,\eps)})/\sim$,
			where over-line means the closure.
			Then by the excision formula we have 
			$
				\mathrm{ind}(\dirac^{\pm}_{(\tilde{A}^1,\tilde{\Phi}^1)})
				=
				\mathrm{ind}(\dirac^{\pm}_{(\tilde{A}^1,\tilde{\Phi}^1)}) + \mathrm{ind}(\dirac^{\pm}_{(A_2,\Phi_2)})
				=
				\mathrm{ind}(\dirac^{\pm}_{(\tilde{A}^2,\tilde{\Phi}^2)}) + \mathrm{ind}(\dirac^{\pm}_{(\tilde{A}_2,\tilde{\Phi}_2)})
				=
				\mathrm{ind}(\dirac^{\pm}_{(\tilde{A}^2,\tilde{\Phi}^2)}) \mp
				k
			$.
			Since the tuple $(\tilde{V}^2,\tilde{h}^2,\tilde{A}^2,\tilde{\Phi}^2)$ satisfies the R\ra{a}de condition,
			we obtain $\mathrm{ind}(\dirac^{\pm}_{(\tilde{A}^2,\tilde{\Phi}^2)}) = \mp\int_{\partial Y} ch(V^{+})$.
			As a consequence of the above arguments,
			we obtain $\mathrm{ind}(\dirac^{\pm}_{(A,\Phi)}) = \mp(\int_{\partial Y} ch(V^{+}) + \sum_{k_{p,i}>0}k_{p,i})$,
			which is the desired equation.
		\end{proof}


\begin{thebibliography}{99}
		\bibitem{Ref:Alm-Pri}
			Antonio L\'{o}pez Almorox and Carlos Tejero Prieto,
			``Holomorphic spectrum of twisted Dirac operators on compact Riemann surfaces'',
			Journal of Geometry and Physics (2006),
			Vol.56, 2069--2091.
		\bibitem{Ref:Ati-Sin}
			Michael Francis Atiyah and Isadore Manuel Singer,
			``The Index of Elliptic operators (III)'',
			Annals of Mathematics (1968),
			Vol.87, no.3, 546--604.
		\bibitem{Ref:Cal}
			Constantine Callias,
			``Axial anomalies and index theorems on open spaces'',
			Communications in Mathematical Physics (1978), vol. 62, 213--234.
		\bibitem{Ref:Cha1}
			Benoit Charbonneau,
			``Analytic aspects of Periodic Instantons'',
			Ph.D Theis, Massachusetts Institute of Technology (2004)
		\bibitem{Ref:Cha-Hur}
			Benoit Charbonneau and Jacques Hurtubise,
			``Singular Hermitian-Einstein Monopoles on the Product of a Circle and a Riemann Surface'',
			International mathematics research notices (2011), no. 1, 175--216.
		\bibitem{Ref:Kro}
			Peter Kronheimer,
			``Monopoles and Taub-NUT metrics'',
			Master Thesis, University of Oxford (1985)
		\bibitem{Ref:Moc-Yos}
			Takuro Mochizuki and Masaki Yoshino,
			``Some Characterizations of Dirac Type Singularity of Monopoles'',
			Communications in mathematical physics (2017), vol. 356, 613--625.
		\bibitem{Ref:Nak}
			Hiraku Nakajima,
			``Monopoles and Nahm's equations'',
			Lecture Notes in Pure and Appl. Math, vol. 145, (1993)
		\bibitem{Ref:Pau}
			Marc Pauly,
			``Monopole moduli spaces for compact 3-manifolds'',
			Mathematische Annalen (1998), vol. 311, 125--146.
		\bibitem{Ref:Rad}
			Johan R\ra{a}de,
			``Callias' index theorem, elliptic boundary conditions, and cutting and gluing'',
			Communications in mathematical physics (1994), vol. 161, 51--61.
		\bibitem{Ref:Yos}
			Masaki Yoshino,
			``The Nahm transform of spatially periodic instantons'',
			arXiv:1804.05565 (2018).
	\end{thebibliography}
\end{document}